\title{Tacnode GUE-minor Processes and Double Aztec Diamonds}
\author{Mark Adler\thanks{2000
{\em Mathematics Subject Classification}. Primary:
60G60, 60G65, 35Q53; secondary: 60G10, 35Q58. {\em Key
words and Phrases}: interlacing, random tiling,Kasteleyn, dimer, Airy
process, extended kernels, random Hermitian ensembles. \newline
 Department of Mathematics, Brandeis University,
Waltham, Mass 02454, USA. E-mail: adler@brandeis.edu.
The support of a National Science Foundation grant \#
DMS-01-00782 is gratefully acknowledged.}~~~Sunil Chhita\thanks{Department of Mathematics,
Royal Institute of Technology (KTH), Stockholm, Sweden. E-mail: chhita@kth.se. 
The support of the Knut and Alice Wallenberg Foundation grant KAW 2010.0063 is gratefully acknowledged.}~~~Kurt Johansson\thanks{Department of Mathematics,
KTH Royal Institute of Technology, Stockholm, Sweden. E-mail: kurtj@kth.se. The support of the Swedish Research Council (VR) and grant KAW 2010.0063 of the Knut and Alice Wallenberg Foundation are gratefully acknowledged. } ~~~ Pierre
van Moerbeke\thanks{ Department of Mathematics,
Universit\'e de Louvain, 1348 Louvain-la-Neuve, Belgium
and Brandeis University, Waltham, MA 02454, USA. E-mail: pierre.vanmoerbeke@UCLouvain.be and
vanmoerbeke@brandeis.edu. The support of a National Science
Foundation grant \# DMS-07-00782, FNRS, PAI grants is
gratefully acknowledged.}}
\date{}
\newcommand{\MAT}[1]{\left(\begin{array}{*#1c}}
\newcommand{\mat}{\end{array}\right)}
\newcommand{\qed}{\leavevmode\unskip\nobreak\penalty200\hskip2pt\null
\nobreak\hfill\rule{1.1ex}{1.1ex}
\medbreak }
\newcommand{\I}{{\rm i}}
\newcommand{\BH}{{\mathbb H}}
\newcommand{\BL}{{\mathbb L}}
\newcommand{\BP}{{\mathbb P}}
\newcommand{\BZ}{{\mathbb Z}}
\newcommand{\al}{\alpha}
\newcommand{\Id}{\mathbbm{1}}
\newenvironment{proof}{\medskip\noindent{\it Proof:\/} }{\qed}
\newcommand{\om}{\omega}
\newcommand{\la}{\langle}
\newcommand{\ra}{\rangle}
\newcommand{\Dt}{\Delta}
\newcommand{\BR}{{\mathbb R}}
\newcommand{\lb}{\lambda}
\newcommand{\dis}{\displaystyle}
\newcommand{\BK}{{\mathbb K}}
\def\be#1\ee{\begin{equation}#1\end{equation}}
\def\bea#1\eea{\begin{eqnarray}#1\end{eqnarray}}
\def\bean#1\eean{\begin{eqnarray*}#1\end{eqnarray*}}
 \newtheorem{definition}{Definition}[section]
 \newtheorem{theorem}[definition]{Theorem}
 \newtheorem{lemma}[definition]{Lemma}
 \newtheorem{proposition}[definition]{Proposition}
\newdimen\squaresize
\newdimen\thickness
\newdimen\Thickness
\newdimen\ll! \newdimen \uu! \newdimen\dd! \newdimen \rr! \newdimen
\def\sq!#1#2#3#4#5{%
\ll!=#1 \uu!=#2 \dd!=#3 \rr!=#4
\setbox0=\hbox{%
 \temp!=\squaresize\advance\temp! by .5\uu!
 \rlap{\kern -.5\ll!
 \vbox{\hrule height \temp! width#1 depth .5\dd!}}%
%
 \temp!=\squaresize\advance\temp! by -.5\uu!
 \rlap{\raise\temp!
 \vbox{\hrule height #2 width \squaresize}}%
%
 \rlap{\raise -.5\dd!
 \vbox{\hrule height #3 width \squaresize}}%
%
 \temp!=\squaresize\advance\temp! by .5\uu!
 \rlap{\kern \squaresize \kern-.5\rr!
 \vbox{\hrule height \temp! width#4 depth .5\dd!}}%
%
 \rlap{\kern .5\squaresize\raise .5\squaresize
 \vbox to 0pt{\vss\hbox to 0pt{\hss $#5$\hss}\vss}}%
}
 \ht0=0pt \dp0=0pt \box0
}
\def\vsq!#1#2#3#4#5\endvsq!{\vbox to \squaresize{\hrule
width\squaresize height 0pt%
\vss\sq!{#1}{#2}{#3}{#4}{#5}}}
\newdimen \LL! \newdimen \UU! \newdimen \DD! \newdimen \RR!
\def\vvsq!{\futurelet\next\vvvsq!}
\def\vvvsq!{\relax
  \ifx     \next l\LL!=\Thickness \let\continue=\skipnexttoken!
  \else\ifx\next u\UU!=\Thickness \let\continue=\skipnexttoken!
  \else\ifx\next d\DD!=\Thickness \let\continue=\skipnexttoken!
  \else\ifx\next r\RR!=\Thickness \let\continue=\skipnexttoken!
  \else\def\continue{\vsq!\LL!\UU!\DD!\RR!}%
  \fi\fi\fi\fi
  \continue}
\def\skipnexttoken!#1{\vvsq!}
\def\place#1#2#3{\vbox to 0pt{\vss
\rlap{\kern#1\squaresize
  \raise#2\squaresize\hbox{$#3$}}
\vss}}
\newcommand{\Rmnum}[1]{\expandafter\@slowromancap\romannumeral #1@}
\begin{document}

\sloppy
\maketitle


\begin{abstract}

We study random domino tilings of a Double Aztec diamond, a
region consisting of two overlapping Aztec diamonds. The random tilings give rise
to two discrete determinantal point processes called the $\mathbb{K}$- and $\mathbb{L}$-particle processes.
The correlation kernel of the $\mathbb{K}$-particles was derived in Adler, Johansson and van Moerbeke
(2011), who used it to study the limit process of the $\mathbb{K}$-particles with different weights
for horizontal and vertical dominos. Let the size of both, the Double Aztec diamond and the overlap, tend to infinity such that the two arctic ellipses just touch; then they show that the fluctuations of the $\mathbb{K}$-particles near the tangency point tend to the tacnode process. 
In this paper, we find the limiting
point process of the $\mathbb{L}$-particles in the overlap when the weights of the horizontal and vertical dominos are equal,
or asymptotically equal, as the Double Aztec diamond grows, while keeping the overlap finite.
In this case the two limiting arctic circles are tangent in the overlap and 
the behavior of the $\mathbb{L}$-particles in the vicinity of the point of tangency can then be viewed as two colliding GUE-minor process, which we call the tacnode
GUE minor process. As part of the derivation of the kernel for the $\mathbb{L}$-particles
we find the inverse Kasteleyn matrix for the dimer model version of Double Aztec diamond.

\end{abstract}

\tableofcontents


\section{Introduction and main results}

\begin{figure}
\begin{center}
\includegraphics[height=4in]{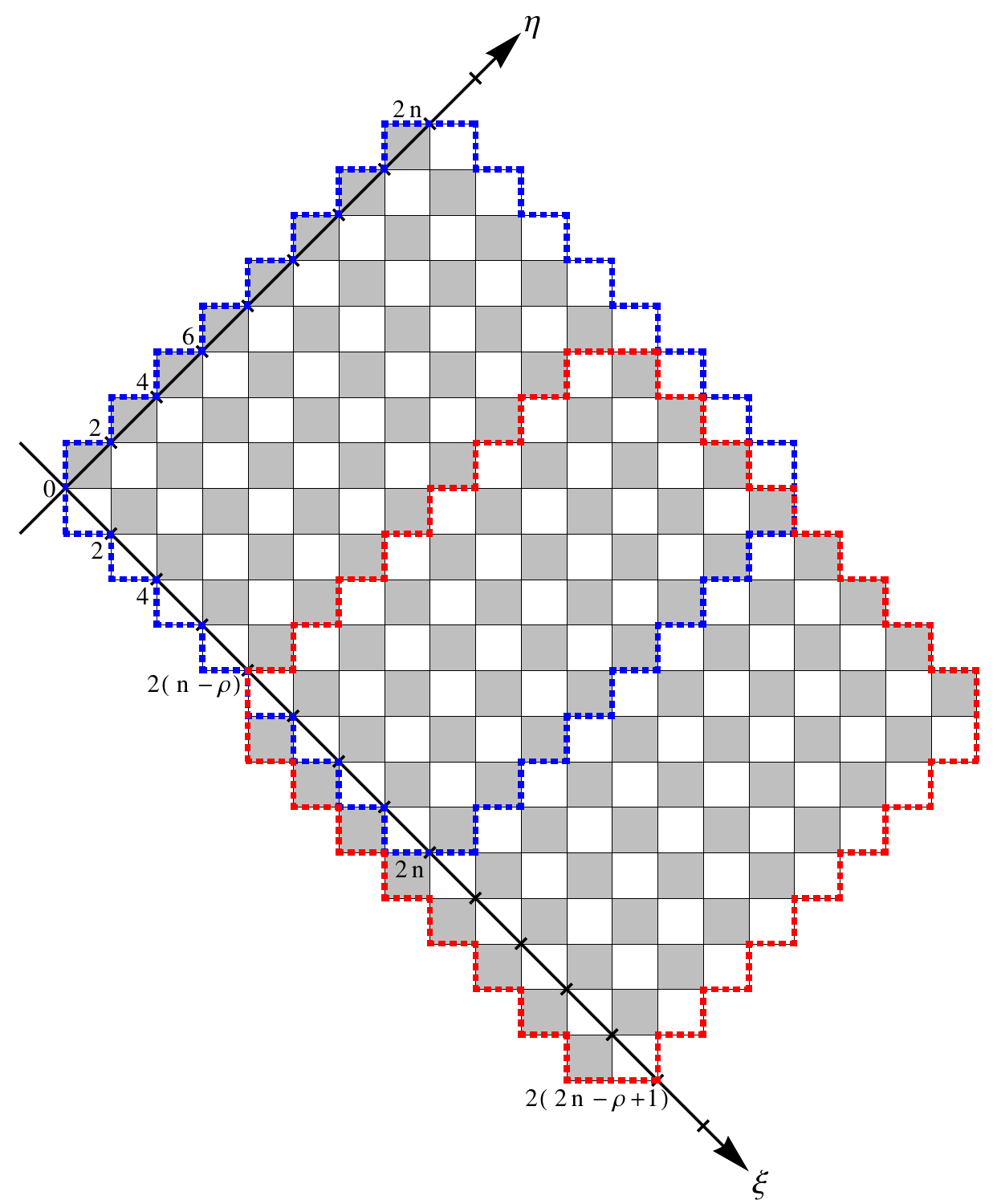}
\caption{Double Aztec diamond with $n=8$ and overlap $\rho=4$, with the $(\xi, \eta)$ coordinates with Aztec diamond $A$ enclosed by the blue dotted line and Aztec diamond $B$ enclosed by the red dotted line. The overlap contains $\rho$ lines (through black squares) $\xi=2s$ for $n-\rho<s\leq n$.
} 
\label{Fig:Kastcoords} 
\end{center}
\end{figure}


The problem of a random domino tiling of a single Aztec diamond has been widely investigated by the combinatorics and probability community; the highlight was the existence of an inscribed arctic circle: inside the circle the dominos display a disordered pattern and outside a regular brick wall pattern; see \cite{EKLP,EKLP2,JPS,FS03,Johansson3,Jo02b}. When the weight $a$ of vertical dominos is different from the one of horizontal dominos, then the arctic circle gets replaced by an inscribed arctic ellipse.  

In \cite{AJvM} the authors   
  investigate the domino tiling of two overlapping Aztec diamonds, each of size $n$, with weight $0<a<1$ for vertical dominos and weight $1$ for horizontal dominos. When the size of the diamonds and the overlap both become very large, in such a way that the two arctic ellipses for the single Aztec diamonds merely touch, then a new critical process, the {\em tacnode process}, will appear near the point of osculation (tacnode); it is run with a time in the direction of the common tangent to the ellipses. The kernel governing the local statistics of the tacnode process is given by a perturbation of the {\em Airy process kernel} by an integral of two functions. It was also shown in \cite{AJvM} that this tacnode process has some universal character: it coincides with the one found in the context of two groups of non-intersecting random walks \cite{AFvM12} and Brownian motions, meeting momentarily \cite{Joh10, FV}; see also \cite{Del12}.

  Another ingredient here is the process given by the successive interlacing eigenvalues of minors of a GUE-matrix: the so-called GUE-minor process. In \cite{JoNo} this process has arisen in the following model: magnifying the infinitesimal region about the point of tangency of the arctic ellipses with the edge of a single Aztec diamond for large $n$, leads to a determinantal process of interlacing points on the successive lines through (say) the black squares, parallel to the edge of the diamond. This yields the GUE-minor process, see also \cite{OR}.
 
  In the present work, we consider two overlapping Aztec diamonds with an overlap, which remains finite, when the size of the diamonds tends to $\infty$. In order to maintain the osculation of the two inscribed ellipses, the geometry forces the weight $a$ of the vertical ones to tend to the weight $1$ of the horizontal ones, say at a rate $\beta/ \sqrt{n/2}$. Macroscopically this amounts to two Aztec diamonds with inscribed arctic circles intersecting infinitesimally. In view of the comments above, it seems natural that this process be related to the GUE-minor kernel. Indeed, 
  when $n\to \infty$, looking with a magnifying glass at the infinitesimal overlap of the diamonds gives rise to a new determinantal process, with local statistics given by the so-called {\em tacnode GUE-minor kernel}; it is a finite rank perturbation of the GUE-minor kernel mentioned above. As far as we are aware, there is no Random Matrix theory counterpart of this distribution. 
  
  In \cite{AJvM}, the authors considered successive lines through black squares perpendicular to the region of overlap with dots in the black square each time the dominos covering that square is pointing to the right of or above the line; these are called the ${\mathbb K}$-particles. In this work, we shall mainly consider successive lines parallel to the region of overlap and put a dot in the black square each time the dominos covering that square is pointing to the left of or above that line; these give the ${\mathbb L}$-particles (Section 1.2). We deduce the kernel for the ${\mathbb L}$-particles from the one of the ${\mathbb K}$-particles, by first obtaining the inverse Kasteleyn matrix \cite{Kas:61} for the double Aztec diamond in terms of the ${\mathbb K}$-kernel and then deduce the ${\mathbb L}$-kernel of the particles from that same inverse Kasteleyn matrix (Section 2). The inverse Kasteleyn matrix of a single Aztec diamond had been obtained for $a=1$ by \cite{Hel:00} and generalized for all $a$  recently in \cite{CJY:12}.  

  In Section 1.2, we study the specific interlacing pattern of the ${\mathbb L}$-particles. We state in Section 1.3 and prove in Section 5 that, in the scaling limit $n\to \infty$, the ${\mathbb L}$-process in the infinitesimal overlap is indeed driven by the {\em tacnode GUE-minor kernel}. Also, we state in Section 1.3 and prove in Section 5 that upon thinning at the rate $p_n=1-2/\sqrt{n/2}$, the ${\mathbb K}$-process is also driven by the same tacnode GUE-minor kernel, but with a (somewhat surprising) shift in one of the discrete parameters.

 





\subsection{ Domino tilings of double Aztec diamonds and random surface}
 Consider two overlapping Aztec diamonds $A$ and $B$, of equal sizes $n$ and overlap $\rho$, with opposite orientations; i.e., the upper-left square for diamond $A$ is black and is white for diamond $B$. The size $n$ is the number of squares on the upper-left side and the amount of overlap $\rho$ counts the number of lines of black squares common to both diamonds $A$ and $B$.  Let $\xi,\eta$ be a system of coordinates as indicated in Figure~\ref{Fig:Kastcoords}. The even lines $\xi=2k$ for $0\leq k\leq 2n-\rho$  and the odd lines $\eta=2k-1$ for $1\leq k\leq n $ run through black squares. The $\rho$ even lines $\xi=2(n-\rho+1), \ldots, 2n$ belong to the overlap of the two diamonds. Cover this double diamond randomly by dominos, horizontal and vertical ones, as in Figure~\ref{Fig:Smallsims}. The position of a domino on the Aztec diamond corresponds to four different patterns, given in Figure~\ref{Fig:Smallsims} below: North, South,
 East, West.
 
 Define 
 $$
 M:=n-\rho+1= \#\left\{\begin{aligned}&\mbox{white squares of diamond $A$  along the line $\eta=0$,   }\\
 &\mbox{having an edge in common with the boundary} 
 \end{aligned} \right\},
 $$
 and define $m$ such that 
 $$
 M-1=n-\rho= \left\{\begin{aligned}& 2m,~~~~~\mbox{when $n$ and $\rho$ have  same parity}\\
 &2m-1,~~\mbox{when $n$ and $\rho$ have opposite parity.}
 \end{aligned}\right.
 $$
 Throughout the paper we assume, for simplicity, the same parity for $n$ and $\rho$, so that $n-\rho=2m$. 
 
 \begin{figure}
 \begin{center}
 \includegraphics[height=3.5in]{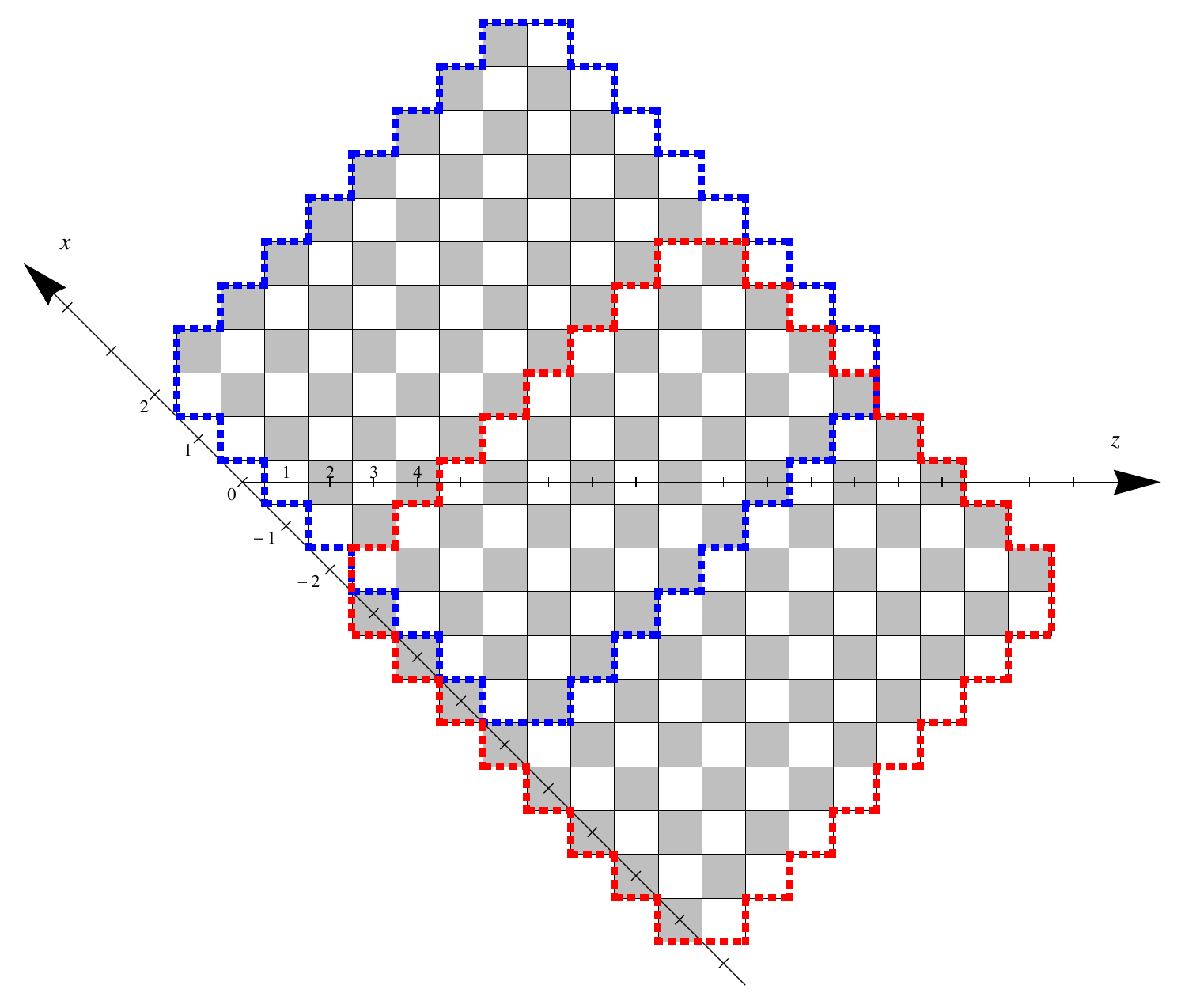}
  \caption{Double Aztec diamond with $n=8$ and overlap $\rho=4$ with the $(z,x)$ coordinates of the Aztec diamond.}
  \label{Fig:zxcoords}
  \end{center}
 \end{figure}
  Together with this arbitrary domino-tiling of the double Aztec diamond $A\cup B$, one defines a piecewise-linear random surface, by means of a height function $h$ specified by the heights, prescribed on the single dominos according to figure \ref{FigLevelline} above; this height can be taken to be piecewise-linear on each domino. This height function is different from the usual one by Cohn, Kenyon, Propp \cite{CKP}, but related to it by an affine transformation. Let the upper-most edge of the double diamond $A\cup B$ have height $h=0$. Then, regardless of the covering by dominos, the height function along the boundary of the double diamond will always 
be as indicated in Figure~\ref{FigLevelline}, with height $h=2n$ along the lower-most edge of the double diamond. Away from the boundary the height function will, of course, depend on the tiling; the associated heights are given in Figure~\ref{FigLevelline}.     

\begin{figure}
\begin{center}
\includegraphics[height=2.75in]{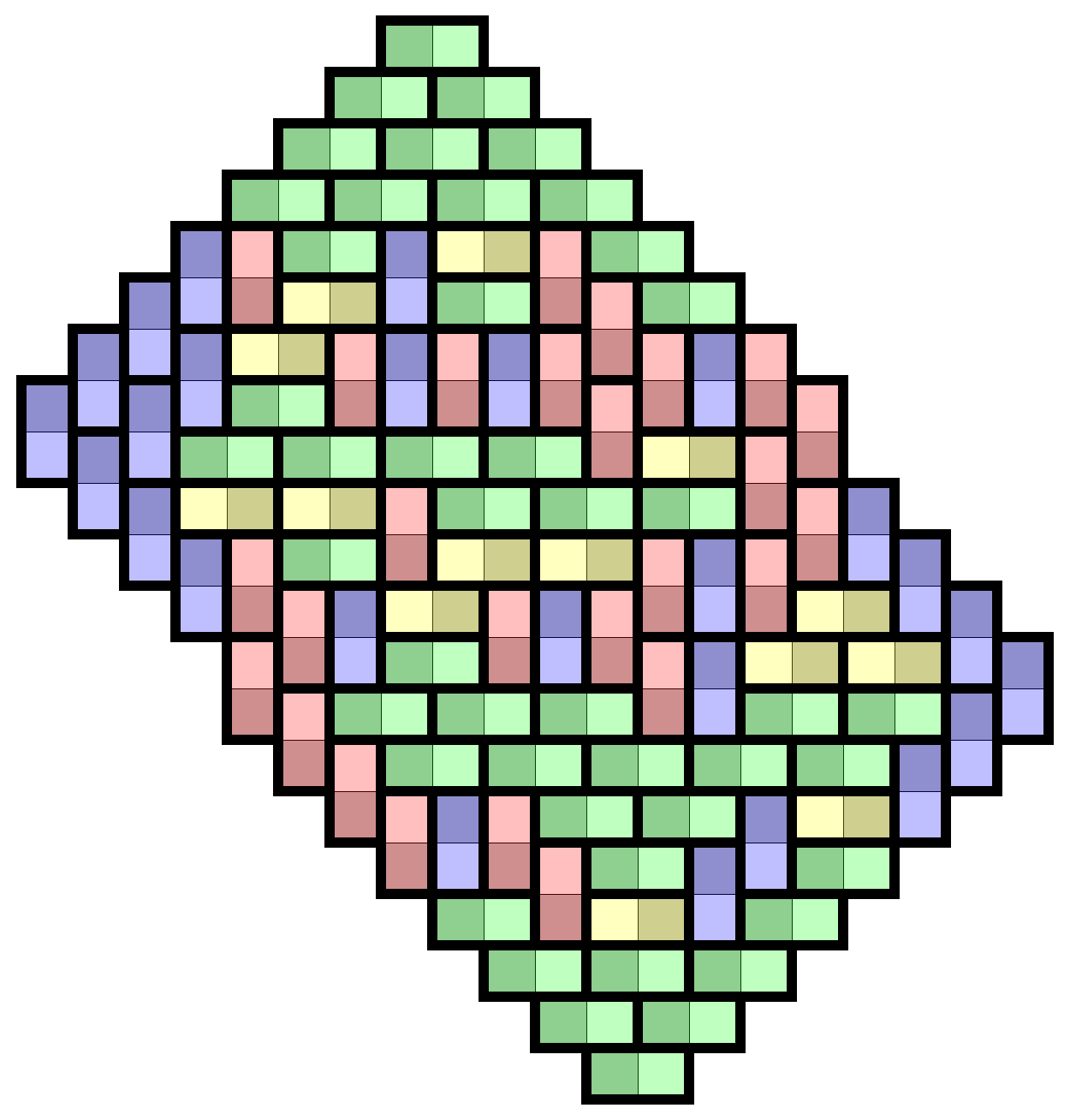}\hspace{2mm}
\includegraphics[height=2.75in]{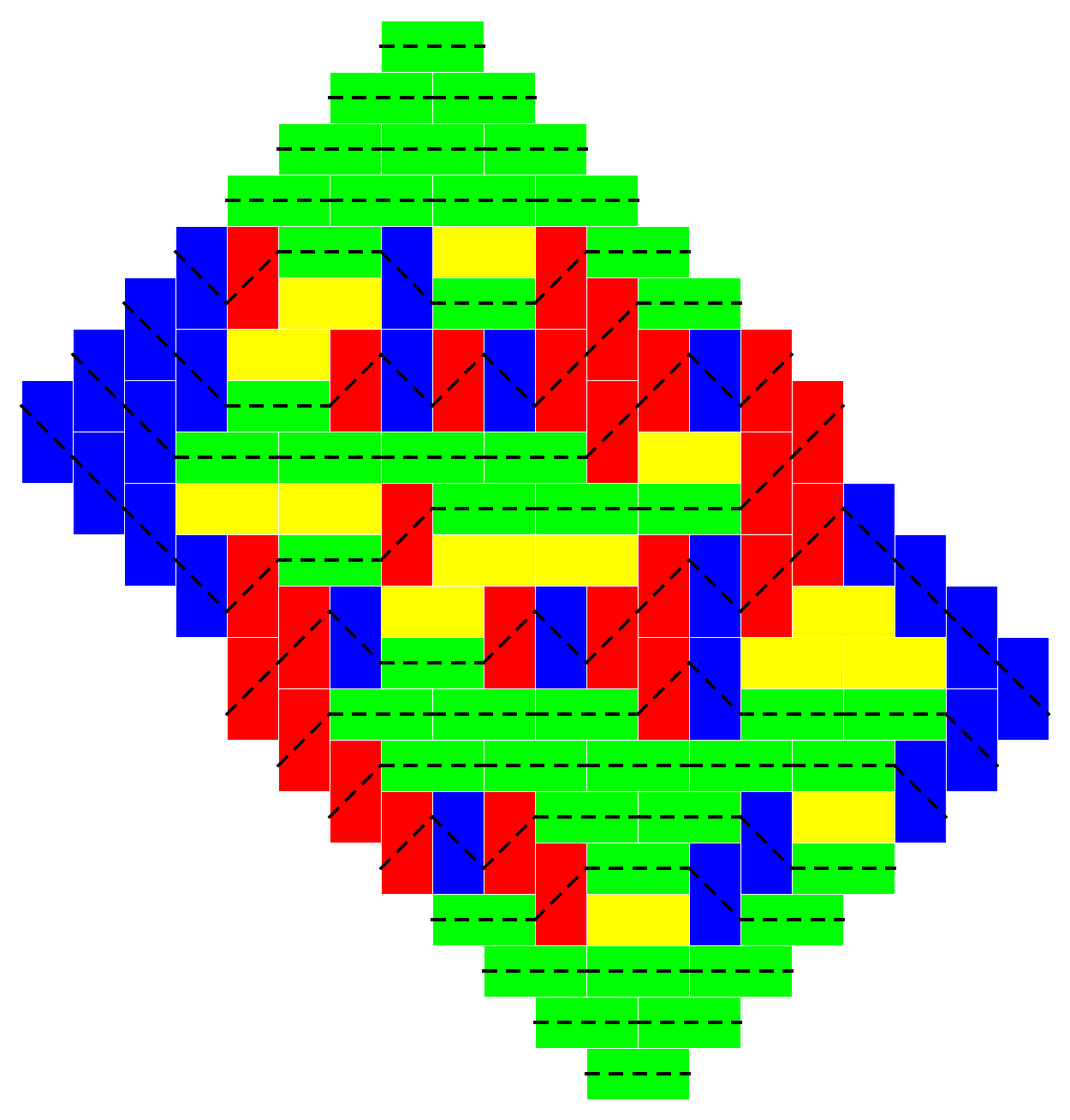}
\includegraphics[height=0.75in]{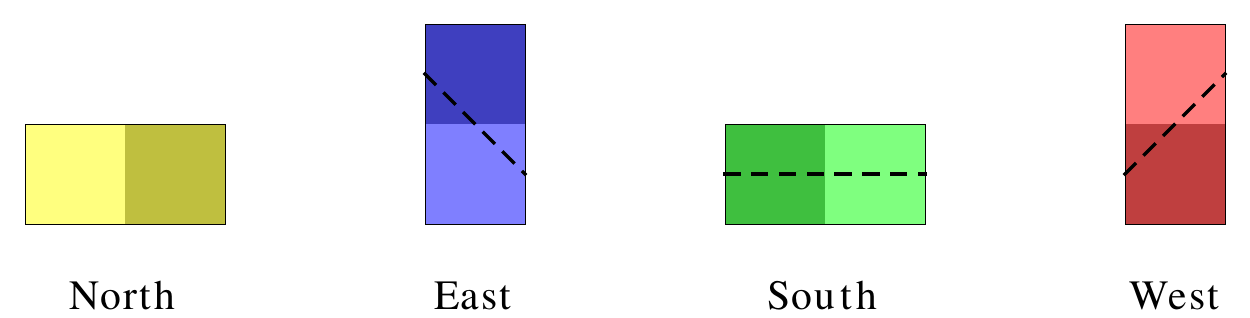}
\caption{Random Tiling of a double Aztec diamond with $n=8$ and $\rho=4$.  The figure on the left shows the underlying checkerboard structure while the 
figure on the right shows the same tiling with the level lines.  These are shown explicitly for the four types of dominos in the bottom figure. These level lines are the same as the DR lattice paths \cite{LRS:01}. 
}
\label{Fig:Smallsims} 
\end{center}
\end{figure}
 The height function $h$ obtained in this way defines the domino tiling in a {\em unique} way, because a white square together with its height specifies in a unique way to which domino it belongs to: North, South, East and West; the same holds for black squares. 
  %
\begin{figure}
\begin{center}
\includegraphics[height=3in]{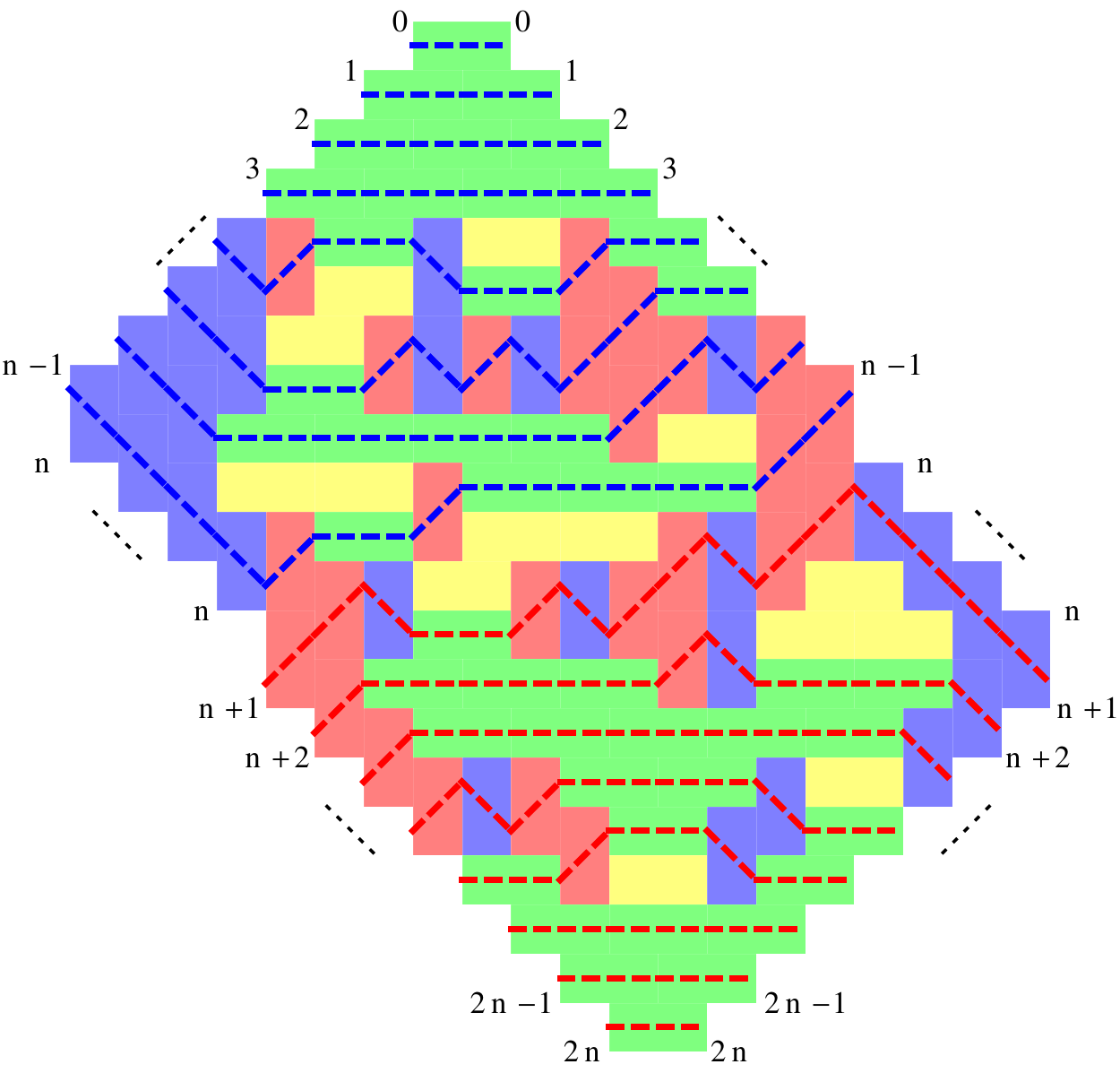}\vspace{10mm}
\includegraphics[height=0.75in]{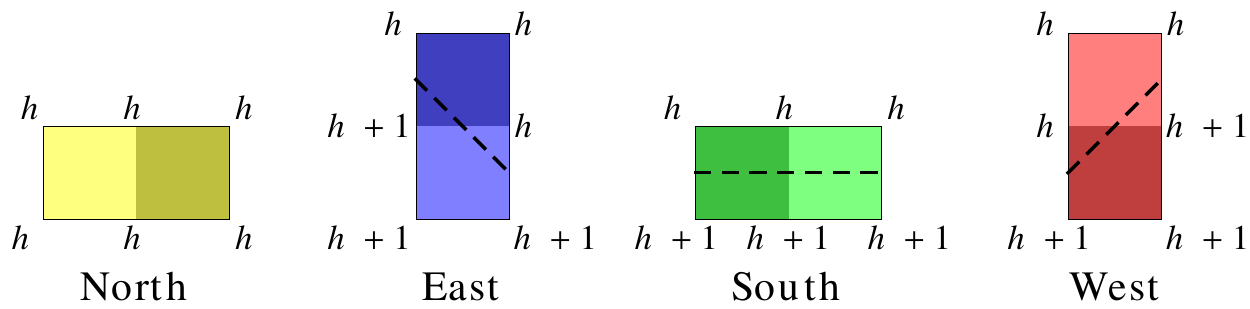}     
\caption{The level lines including the height function around the boundary.  The heights change in the interior only when crossing a level line. The bottom figure shows the height change for each individual domino. }
\label{FigLevelline}
\end{center}
\end{figure}

This height function associates thus a piece-wise linear random  surface with each random tiling and two groups of level curves of this random surface corresponding to the half-integer values:%
      $$\underbrace{ \frac12 ,~\frac 32,~\frac 52,\ldots, n-\frac 12}_{\mbox{$A$-level curves}} ,~\underbrace{n+\frac 12,\ldots,~ 2n-\frac 12}_{\mbox{$B$-level curves}}.$$ 
      
      Put the weight $a>0$ on vertical dominoes and the weight $1$ on horizontal dominoes, so that the probability of a tiling configuration $T$ can be expressed as 
\be\BP(\mbox{domino tiling}~T)=\frac{\dis a^{\#\mbox{vertical domino's in $T$} }}{\dis\sum_{\mbox{\tiny all possible tilings $T$}}{a^{\#\mbox{vertical domino's in}~T}}}
\label{Ptiling}\ee
Remember $2m:=n-\rho$ throughout the paper. 
We will also use the coordinates indicated in figure \ref{Fig:zxcoords}. These are the coordinates, which we will call diamond coordinates, that were used for the
particle processes in \cite{AJvM}.
The transformation from diamond coordinates $(z,x )$ to Kasteleyn coordinates $(\xi ,\eta )$ is given by :
\be
\begin{aligned}
z&=\eta +1\\
x &=\tfrac 12 (\eta -\xi +2m+1)
\end{aligned}
\Longleftrightarrow
\begin{aligned}
\xi &= z -2x +2m  \\
\eta  &=z -1
\end{aligned}
\label{K}\ee

\subsection{Two determinantal point processes ${\mathbb L}$ and ${\mathbb K}$}\label{subs1.2}   

\medspace

 { \bf I. \em   The ${\mathbb L}
   $-process} is specified by putting a dot in the middle of the black square when the line $\xi=2s$ in $(\xi,\eta)$-coordinates for $0\leq s\leq 2n-\rho$ intersects a level curve. We call these dots {\em $\mathbb{L}$-particles}. See Figure~\ref{FigKLparticles} for an example.  More precisely we can put a blue dot when intersecting $A$-level curves and a red dot when intersecting $B$-level curves to distinguish the dots coming from the two Aztec diamonds; see Figure~\ref{Figredblue}. In other terms, put a dot in the black square each time the random surface goes down one unit along the line $\xi=2s$. 
We are concerned with the probabilities of the following kinds of events, where $[k,\ell ] $ is an interval of odd integers along the $\eta$-axis (so $k$ and $\ell$ can be taken odd):
 $$\begin{aligned}
& \left\{\mbox{The line}~  \{ \xi=2s \}  ~\mbox{has an $\eta$-gap} \supset [k,\ell]
 \right\}
 \\& =\left\{\mbox{Interval $[k,\ell] \subset \{ \xi\!=\!2s \}$ in $\eta$-coordinates contains no dot-particles}\right \}
\\&=\left\{\mbox{The random surface is flat along the $\eta$-interval   $[k,\ell]
 \subset \{ \xi=2s \}$}\right\}
 \\&=\left\{\mbox{Dominos covering $[k,\ell] \subset \{ \xi\!=\!2s \}$ are pointing to the left of} \right. \\
& \left. \, \mbox{or above the line $\{ \xi\!=\!2s \}$}\right\}
 \\&=\left\{\mbox{Dominos covering $[k,\ell] \subset \{ \xi\!=\!2s \}$ are red or yellow in upper Figure \ref{FigKLparticles}}\right\}
\end{aligned}
 $$ 

 \begin{theorem} \label{main1'}  The ${\mathbb L} $-particles on the successive lines $\{\xi=2s \}$ for $1\leq s\leq 2n-\rho$ form a determinantal point process with correlation kernel
\be
\begin{aligned}
{\mathbb L}_{n,\rho }
 (\xi_1,\eta_1;\xi_2,\eta_2)  =&(1+a^2){\mathbb L}^{(0)}_{n }(\xi_1,\eta_1;\xi_2,\eta_2)
\\  
&- (1+a^2)\langle ((I-{K}_n)^{-1}_{_{\geq n-\rho+1}} A_{\xi_1,\eta_1}) (k), B_{\xi_2,\eta_2}(k) \rangle_{_{\geq n-\rho+1 }}.
\end{aligned}
\label{Lkernel}
\ee
given by a perturbation of      
  a kernel ${\mathbb L}^{(0)}_{n }$ by an inner-product\footnote{$\la f(k), g(k)\ra_{\geq \alpha}=\sum_{\alpha}^{\infty} f(k)g(k) 
$ is an inner product in $\ell^2[\alpha,\infty]$.} involving the resolvent of yet another kernel $K_n$, all given by formulas (\ref{oneAzt}) (section (\ref{L0kernel}))
 This shows that, given $q$ lines $\{\xi=2s_i \}$ and  integers $k_i,~\ell_i$, with $0\leq k_i< \ell_i\leq n$, the gap probability is expressed as the 
Fredholm determinant\footnote{The variables $\eta_i$ below run through odd values only.} 
  \be\begin{aligned}
        \lefteqn{
        {\mathbb P}\left(\bigcap_{i=1}^{q}\left\{\mbox{The line}~  \{ \xi=2s_i \}  ~\mbox{has an $\eta$-gap} \supset [k_i,\ell_i]
   \right\}\right)
   }\\
  & \hspace*{.1cm}     =\det\left(\Id-\left[\chi_{[ k_i , \ell_i ]} (\eta_i){\mathbb L}
   _{n,\rho}(2s_i,\eta_i;2s_j,\eta_j)\chi_{[ k_j , \ell_j ]}(\eta_j)\right]_{1\leq i,j\leq q}\right),
    \end{aligned}  \label{I2}  \ee
of the kernel ${\mathbb L}_{n,\rho}
 $.

 \end{theorem}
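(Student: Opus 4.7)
The plan is to exploit the dimer-model structure of the double Aztec diamond in two steps: (a) first, obtain the inverse Kasteleyn matrix $\mathbb{K}^{-1}$ for the double Aztec diamond in terms of the already-known $\mathbb{K}$-kernel from \cite{AJvM}; (b) then translate to the $\mathbb{L}$-particles by identifying which dimer configurations around each black square on a line $\{\xi=2s\}$ correspond to an $\mathbb{L}$-particle being present (namely, the two domino types with the covered black square on the lower-right of its white partner, i.e.\ pointing to the left of or above that line).

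For step (a), I would use the fact that the $\mathbb{K}$-particles also record presence of certain dominoes, so their two-point correlation function is itself expressible as a linear combination of entries of $\mathbb{K}^{-1}$ at neighboring lattice sites. Inverting this local relation, each entry of $\mathbb{K}^{-1}$ can be written as a linear combination of $\mathbb{K}$-kernel values plus a local "diagonal" contribution. Because the $\mathbb{K}$-kernel in \cite{AJvM} already has the form "single Aztec baseline contour integral $+$ rank-$\rho$ correction supported on the overlap lines $\{\xi=2s:\, n-\rho+1\le s\le n\}$", the resulting $\mathbb{K}^{-1}$ inherits the same structure: a single-Aztec inverse Kasteleyn matrix (known from \cite{Hel:00,CJY:12}) plus a correction that is the resolvent of an operator on $\ell^2[n-\rho+1,\infty)$.

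For step (b), Kenyon's formula gives the probability of any fixed finite set of dominos in the tiling as a determinant of the corresponding submatrix of $\mathbb{K}^{-1}$. Summing the four $\mathbb{K}^{-1}$-entries around a black square on the line $\{\xi=2s\}$ that correspond to the two "left-or-above" domino types produces the $\mathbb{L}$-kernel at that black square. The factor $(1+a^2)$ is the product of edge weights contributing to these two local configurations; the term $(1+a^2)\mathbb{L}^{(0)}_n$ comes from the single-Aztec part of $\mathbb{K}^{-1}$, while the inner-product correction $\langle (I-K_n)^{-1}_{\ge n-\rho+1}A_{\xi_1,\eta_1},\, B_{\xi_2,\eta_2}\rangle_{\ge n-\rho+1}$ arises directly from the resolvent form of the overlap correction, with $A_{\xi,\eta}$ and $B_{\xi,\eta}$ the vectors that couple a site $(\xi,\eta)$ to the rank-$\rho$ perturbation.

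The main technical obstacle will be step (a): starting from the contour-integral $\mathbb{K}$-kernel of \cite{AJvM}, one must algebraically manipulate it to isolate the "single Aztec $+$ resolvent" structure of $\mathbb{K}^{-1}$; this requires carefully tracking Kasteleyn sign phases and performing residue/contour deformations so that the perturbation appears as a genuine Neumann series, not just a formal expansion. Once $\mathbb{K}^{-1}$ is in the desired form, step (b) is combinatorial bookkeeping. Finally, the Fredholm determinant identity \eqref{I2} is immediate: once $\mathbb{L}_{n,\rho}$ is established as the kernel of a determinantal point process on the lattice lines $\{\xi=2s_i\}$, the gap probabilities on any collection of intervals $[k_i,\ell_i]$ equal the Fredholm determinant of $\mathbb{L}_{n,\rho}$ restricted by the indicator $\chi_{[k_i,\ell_i]}$, which is the standard inclusion-exclusion identity for finite determinantal point processes.
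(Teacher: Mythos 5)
Your overall architecture matches the paper's: first express the inverse Kasteleyn matrix $K_a^{-1}$ of the double Aztec diamond in terms of the $\mathbb{K}$-kernel of \cite{AJvM}, then apply Kenyon's formula to the two edge types that signal an $\mathbb{L}$-particle and read off the kernel. However, your step (a) contains a genuine gap. The relation ``$\mathbb{K}$-particle correlations $=$ determinants of certain combinations of $K_a^{-1}$ entries'' cannot simply be \emph{inverted}: the $\mathbb{K}$-particles only see the edges corresponding to south and west dominos, so the correlation functions constrain only particular sums of products of $K_a^{-1}$ entries, and a determinantal kernel is in any case determined only up to conjugation. The paper uses exactly this comparison, but explicitly only as a heuristic to \emph{guess} the formula (which turns out to be a single signed, coordinate-transformed value of $\mathbb{K}_{n,\rho}$ --- not a linear combination plus a diagonal term as you describe); the actual proof is a direct verification that the guessed matrix $C$ satisfies $K_a\cdot C=\mathbbm{I}$, carried out case by case over interior vertices, the left, bottom and top boundaries, and one special corner vertex, using contour-integral identities for the functions $\psi$. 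Without some version of this verification your argument establishes nothing about $K_a^{-1}$, and everything downstream collapses.

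A smaller but real issue is step (b), which is not mere bookkeeping. After summing $\sum_r K_a(b,b+f_r)K_a^{-1}(b+f_r,\cdot)$ one must still prove that the resulting expression equals $(1+a^2)\mathbb{L}^{(0)}_n-(1+a^2)\langle(I-K_n)^{-1}_{\ge n-\rho+1}A_{\xi_1,\eta_1},B_{\xi_2,\eta_2}\rangle$; in the paper this requires nontrivial contour deformations (e.g.\ handling the $\eta_1=\eta_2$ diagonal term by moving the contour to $\Gamma_{-1/a}$) and the identity $a_{\cdot,2s+1}+a\,a_{\cdot,2s-1}=-(1+a^2)A_{\xi_1,\eta_1}$, which is where the $(1+a^2)$ actually comes from --- it is the combination $1\cdot 1+a\cdot a$ of the two edge contributions, not a ``product of edge weights.'' Your final remark that the Fredholm determinant identity \eqref{I2} follows by standard inclusion--exclusion once the kernel is established is correct.
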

 
This theorem will be proved in section \ref{Kernels}.

 \begin{figure}
 \includegraphics[height=2.75in]{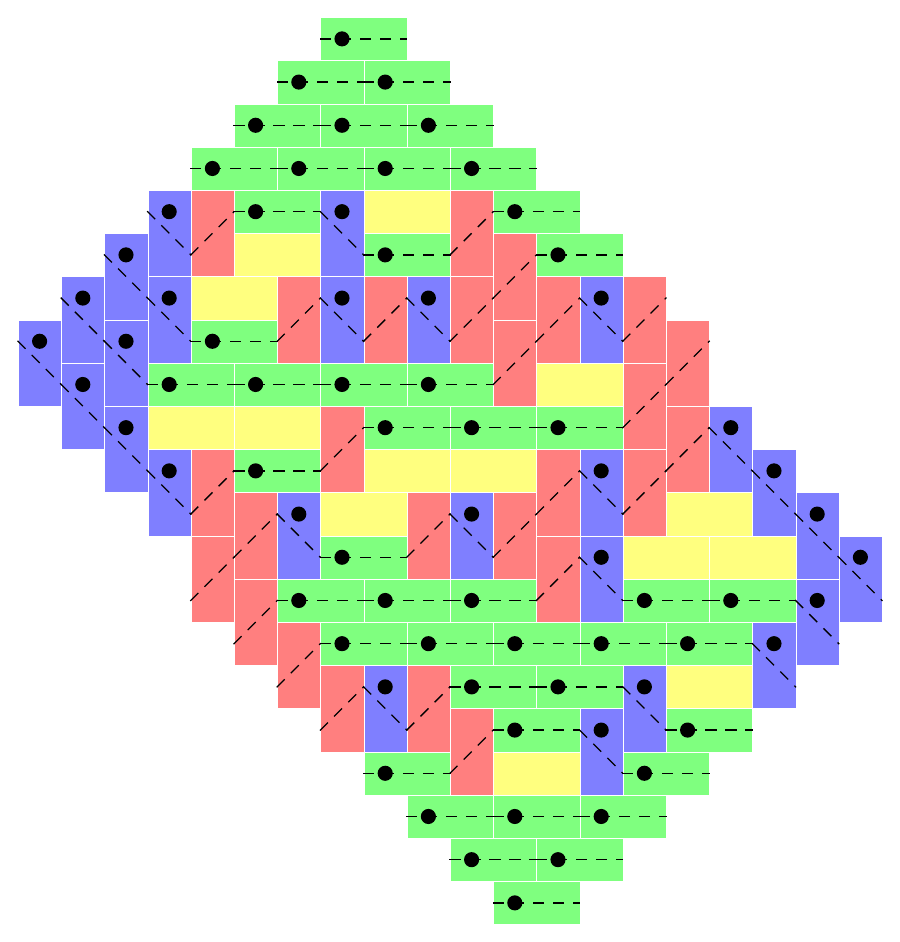} \hspace{2mm}
  \includegraphics[height=2.75in]{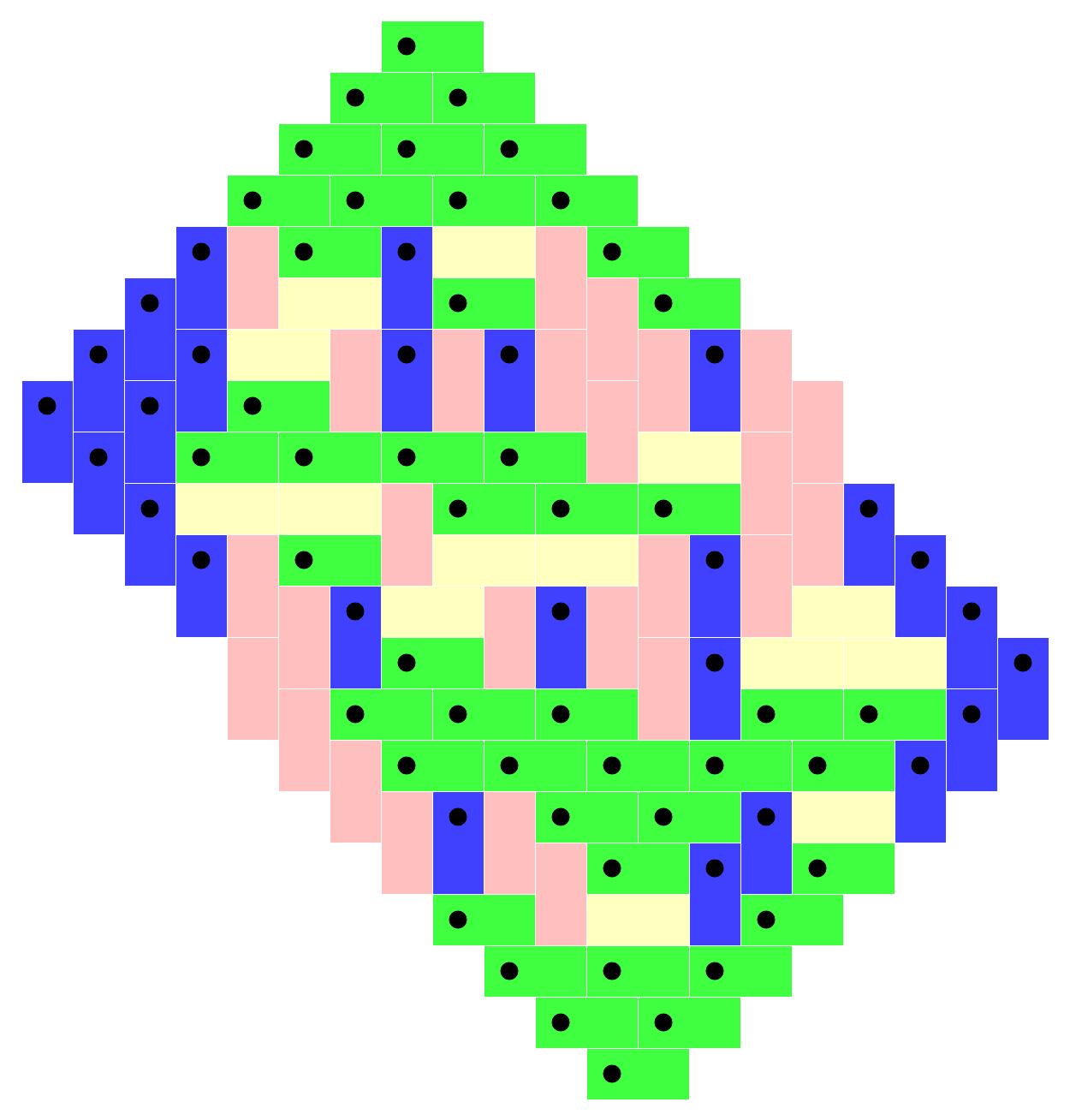} \vspace{5mm}\\
  \includegraphics[height=2.75in]{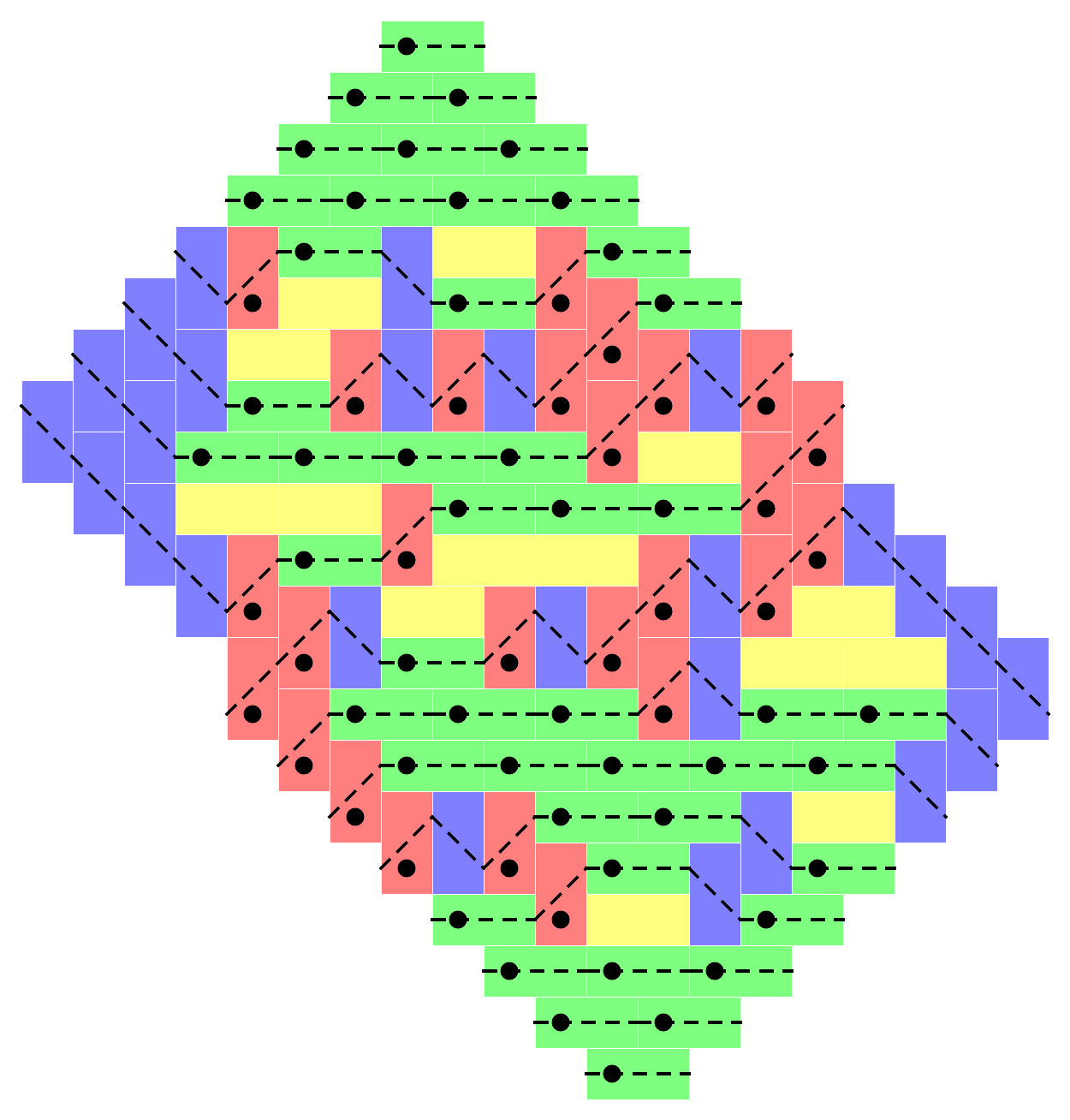}\hspace{2mm}
  \includegraphics[height=2.75in]{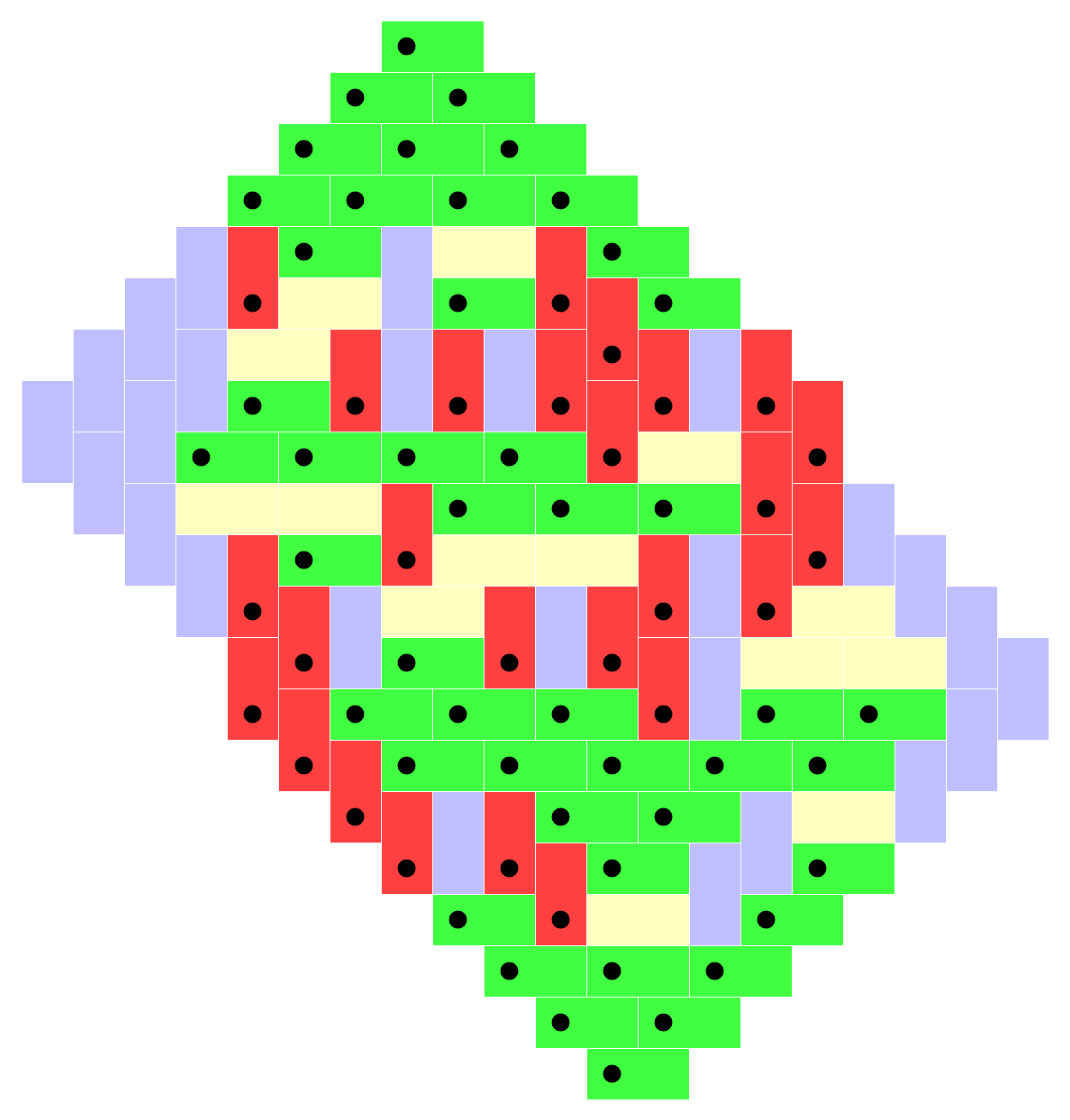}
 \caption{The top figures show the $\mathbb{L}$ particle process using the level lines (on the left) and the dominos on the right.  There is an $\mathbb{L}$ particle for every green (south) and blue (east) domino.  The bottom figures show the $\mathbb{K}$ particle process. There is a $\mathbb{K}$ particle for every green (south) and red (west) domino.} 
 \label{FigKLparticles}
 \end{figure}

{ \bf II.\em  ~ The ${\mathbb K}
 $-process}. Now we put instead a blue dot in the middle of the black square when the line $z=2k$ in $(z,x)$-coordinates for $1\leq k\leq n$ intersects an $A$-level curve and a red dot when intersecting a $B$-level curve; i.e., put a dot each time the random surface goes down one unit along the line $z=2k$; see Figure~\ref{FigKLparticles}. These dots define the 
{\em $\mathbb{K}$-particles}.
In this instance, we are concerned with the probabilities of the following kinds of events, where $[k,\ell ] $ is an interval along the $x$-axis:
 $$\begin{aligned}
& \left\{\mbox{The line}~  \{ z=2r \}  ~\mbox{has an $x$-gap} \supset [k,\ell]\right\}
 \\& =\left\{\mbox{The interval $[k,\ell] \subset \{ z\!=\!2r \}$ in $x$-coordinates contains no dot-particles}\right\}
 \\&=\left\{\mbox{The random surface is flat along the $x$-interval $[k,\ell] \subset \{ z=2r \}$}\right\}
 \\&=\left\{\mbox{Dominos covering $[k,\ell] \subset \{ z\!=\!2r \}$ are pointing to the left } \right.
\\& \, \left. \mbox{
or below $\{ z\!=\!2r \}$}\right\} 
 \\&=\left\{\mbox{Dominos covering $[k,\ell] \subset  \{ z\!=\!2r \}$ are blue or yellow in lower Figure \ref{FigKLparticles}}\right\}
 \end{aligned}$$ 


\begin{theorem} {\rm \cite{AJvM}}\label{main2'} The ${\mathbb K} $-particles on the successive lines $\{z=2r \}$ for $1\leq r\leq n$ form a determinantal point process with correlation kernel
 given by perturbing the one-Aztec diamond  kernel ${\mathbb K}^{0}_{n }
 $ with an inner-product 
 involving the resolvent of the kernel $K_n
 $, all defined in (\ref{K13}) and (\ref{K12}):
\be
  \begin{aligned}
  {(-1)^{x-y}    \BK
   _{n,\rho}(2r,x;2s,y) }  
 =~& {\mathbb K}_n ^{0}
  \bigl(2r,x;2s,y\bigr) %
 \\  
 &~~ -\left\la(\Id-K_n )_{_{\geq n-\rho+1}}^{-1}a_{-y,s}(k),b_{-x,r}(k) \right\ra _{_{\geq n-\rho+1}}    .\end{aligned}  %
 \label{K1}\ee
This shows that given $q$ lines $\{z=2r_i \}$ and  integers $k_i,~\ell_i$, with $ r_i-m-n\leq k_i< \ell_i\leq  r_i+m$, we have
  \be\begin{aligned}
        \lefteqn{
        {\mathbb P}\left(\bigcap_{i=1}^{q}\left\{\mbox{the line}~\{z=2r_i\} ~\mbox{has an $x$-gap} \supset [ k_i , \ell_i ]\right\}\right)
   }\\
  & \hspace*{2cm}     =\det\left(\Id-\left[\chi_{[ k_i , \ell_i ]}(x_i)
   {\mathbb K}_{n,\rho}
   (2r_i,x_i;2r_j,x_j)\chi_{[ k_j , \ell_j ]}(x_j)\right]_{1\leq i,j\leq q}\right).
    \end{aligned}  \label{I2}  \ee
with a kernel ${\mathbb K}_{n,\rho}
 $.
  
\end{theorem}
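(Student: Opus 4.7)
The plan is to derive the $\mathbb{K}$-kernel via the non-intersecting lattice path representation of the double Aztec tiling together with the Eynard--Mehta theorem, and then to massage the resulting biorthogonal formula into the two-term shape (\ref{K1}) by a Schur-complement argument that isolates the overlap as a low-rank perturbation of the single-diamond kernel.

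First, I would pass from the weighted tiling measure (\ref{Ptiling}) to a measure on non-intersecting paths using the standard domino-to-DR-path bijection: each tiling is coded by the level curves of Figure~\ref{FigLevelline}, which split into $n$ paths carrying the $A$-level curves and $n$ paths carrying the $B$-level curves running in opposite orientation, with deterministic entry and exit points dictated by the boundary values of the height function. The weight $a$ on vertical dominos becomes a per-step weight in the Markov transitions between consecutive cross-sections $\{z=2r\}$. Because the $\mathbb{K}$-particles are exactly the positions of these paths on the vertical cuts, their joint law is a product-of-transitions measure of Eynard--Mehta type, and its correlation kernel is the inverse of a finite Gram matrix $G$ built from the compositions of transitions between the prescribed start and end configurations.

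Next, I would compute $G$ explicitly. For a \emph{single} Aztec diamond of size $n$, this Gram matrix is effectively triangular (or conjugate to the identity on an explicit biorthogonal pair), and one recovers the single-diamond kernel $\mathbb{K}^{0}_{n}$ from (\ref{K13}) with the gauge sign $(-1)^{x-y}$. For the \emph{double} Aztec diamond, the only place where the two families of paths interact is the overlap strip of width $\rho$; this contributes a block modification to $G$ supported on the indices $\{k \geq n-\rho+1\}$, which one identifies with $\mathrm{I}-K_{n}$ acting on $\ell^{2}[\geq n-\rho+1]$, where $K_{n}$ is the kernel from (\ref{K12}) encoding the propagation of the opposite-oriented $A$- and $B$-paths across the overlap. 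A Woodbury / Schur-complement identity then writes
\[
G^{-1}=G_{0}^{-1}-G_{0}^{-1}V\,(\mathrm{I}-K_{n})^{-1}_{\geq n-\rho+1}\,V^{\ast}G_{0}^{-1},
\]
where the rows/columns of $V$ are precisely the functions $a_{-y,s}$ and $b_{-x,r}$ obtained by composing the transitions from the boundary data of each diamond to the overlap. Substituting into the Eynard--Mehta formula gives the correlation kernel in the form (\ref{K1}), and the gap probability statement (\ref{I2}) is then the standard determinantal-point-process identity applied to the rectangular indicator $\chi_{[k_{i},\ell_{i}]}$.

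The main obstacle is the explicit identification of the overlap block of $G$ as $\mathrm{I}-K_{n}$ together with the correct perturbation vectors $a_{-y,s},b_{-x,r}$. One has to keep track simultaneously of three sources of signs and mismatches: the opposite orientation of the two diamonds (the origin of the $(-1)^{x-y}$ prefactor on the left of (\ref{K1})), the Kasteleyn/gauge signs carried along the DR paths, and the fact that only one half-line in the overlap index set is affected by the coupling, so that the resolvent must live on $\ell^{2}[\geq n-\rho+1]$ rather than on the whole lattice. Once this bookkeeping is done, the derivation of (\ref{K1}) is a mechanical Eynard--Mehta computation, and (\ref{I2}) follows automatically.
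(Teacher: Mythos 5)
First, note that the present paper does not prove this theorem at all: it is imported from \cite{AJvM} (the sentence immediately following the statement reads ``The theorem was proved in \cite{AJvM} where the $\BK$-particles were called outlier particles''), so there is no in-paper argument to compare yours against. Judged against the derivation in \cite{AJvM}, your outline points in the right general direction (Lindstr\"om--Gessel--Viennot plus an Eynard--Mehta-type formula, with the overlap responsible for the resolvent term), but it has two genuine gaps. The first is structural: the $\BK$-particles do not have a fixed number per line $\{z=2r\}$ (their counts vary with the line, just as Proposition \ref{Interla} records for the $\BL$-particles), so the standard Eynard--Mehta theorem does not apply to them directly. In \cite{AJvM} --- and this is visible in Section \ref{Sec:inverseK} of the present paper, equations (\ref{kasttokkernel})--(\ref{inverseK:matrixA}) --- one first applies Eynard--Mehta to the dual \emph{inlier} particles, which do have a constant number $2m+1$ per line and whose kernel has the form $-\tilde\psi+\sum_{i,j}\tilde\psi\,(A^{-1})_{ij}\,\tilde\psi$ with a finite $(2m+1)\times(2m+1)$ Gram matrix $A$, and then passes to the $\BK$-particles by particle--hole duality, $\mathbb{K}_{n,\rho}=-K^{\mathrm{inlier}}_{n,m}$. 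Your sketch omits this duality step entirely.

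The second gap is the one you yourself flag but then dismiss as ``mechanical'': converting the finite Gram-matrix inverse into the semi-infinite resolvent $(\Id-K_n)^{-1}_{\geq n-\rho+1}$ together with the explicit contour-integral vectors $a_{-y,s}$ and $b_{-x,r}$ of (\ref{K12}). A Woodbury/Schur-complement identity applied to a finite Gram matrix produces another finite-dimensional inverse; it does not by itself produce the resolvent of an operator acting on $\ell^2[n-\rho+1,\infty)$. The identification of the overlap coupling with $\Id-K_n$ on that half-line, and the extraction of the specific double-contour formulas, is the actual content of the proof in \cite{AJvM}, not sign bookkeeping. As written, your proposal is a plausible plan whose two hardest steps --- the inlier/outlier duality and the passage from the finite inverse to the half-line resolvent --- are respectively missing and deferred.
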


The theorem was proved in \cite{AJvM} where the $\BK$-particles were called outlier particles.

\medbreak

\noindent The dot particles of the ${\mathbb L}$-process satisfy the following interlacing pattern.

\begin{figure}[h!]
 \begin{center}
  \includegraphics[height=3.5in]{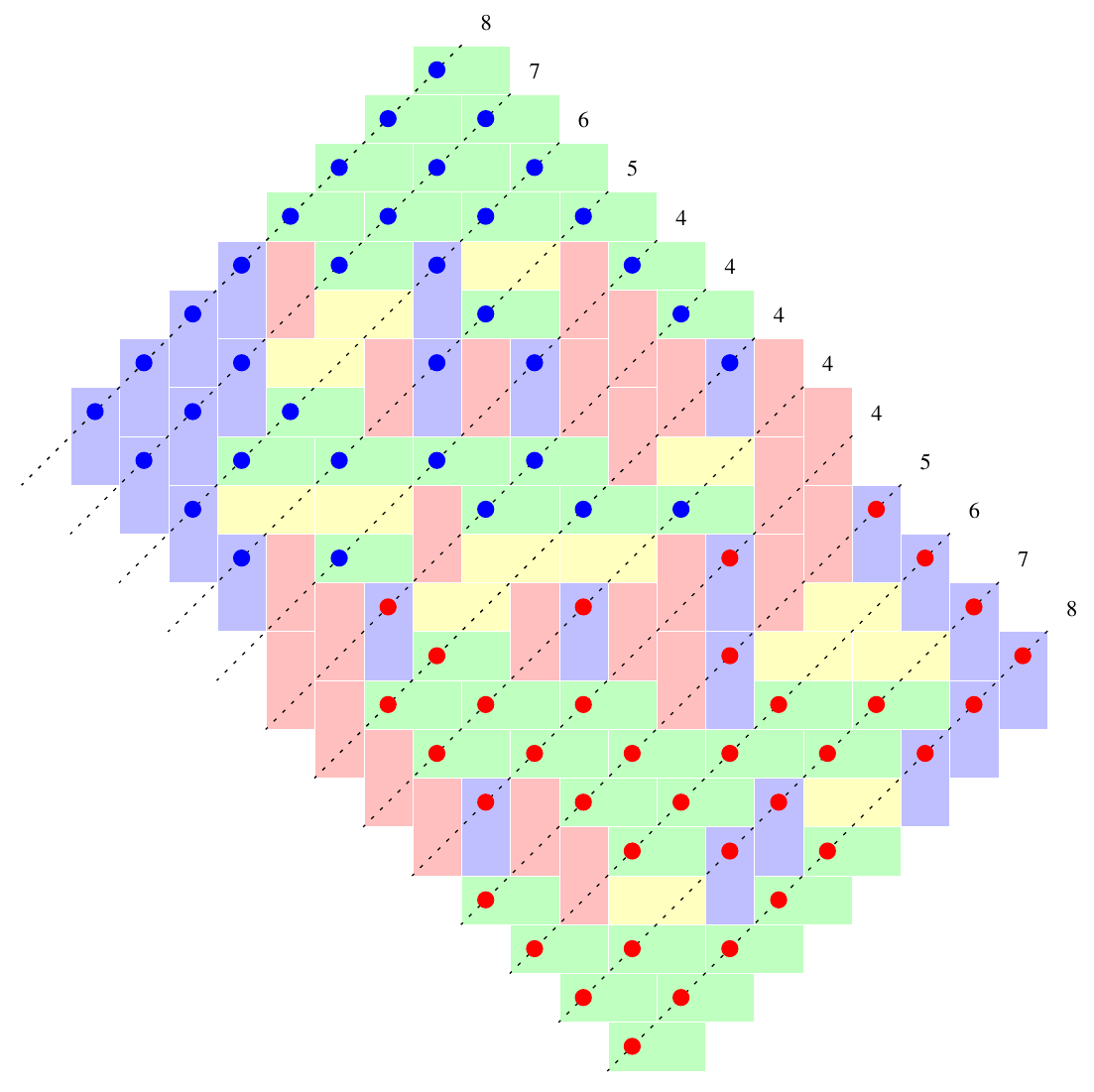}
  \caption{The red and blue $\mathbb{L}$ particles.  The blue $\mathbb{L}$ particles correspond to Aztec diamond $A$ while the red particles correspond to Aztec diamond $B$ see Figure~\ref{Fig:Kastcoords}.  The numbers represent the total number of particles on each line $\xi=2s$}
\label{Figredblue}
 \end{center}
\end{figure}

\begin{proposition} \label{Interla}
 For the ${\mathbb L}$-process, the lines $\xi=2s$ contain blue dots and red dots, according to the following interlacing patterns, with varying numbers:
$$\begin{array}{llllllllll}
 
 \mbox{lines $\xi=2s$}&\vline&\in\mbox{Diamond}&\vline  & \mbox{\# of blue and red dots} \\
 \hline
 0\leq  s< n-\rho&\vline&\in A&\vline  &  \mbox{$n-s$ blue dots  }\\  
 s=n-\rho&\vline &\in A &\vline & \rho \mbox{ blue dots}\\
 n-\rho<  s <  n &\vline &\in A\cap B&\vline & \rho 
\mbox{ dots with }\left\{\begin{aligned} &\mbox{$n-s$ blue dots} 
 \\ 
 &\mbox{to the right of}\\&  \mbox{$s- n+\rho$ red dots}\\ &\mbox{for each } s \\\end{aligned}\right.  \\
 s=n &\vline &\in A\cap B&\vline  & \rho \mbox{ red dots}\\
  n< s\leq  2n-\rho &\vline&\in  B&\vline & \mbox{$\rho+s-n$ red dots  }\\
 \end{array}
 $$
 with interlacing of the blue dots and interlacing of the red dots, with regard to the $\eta$-coordinate; in the overlap of the two diamonds, the dots interlace as well, with the right most dot on the line $\xi=2s$ being to the right of the right most dot on the line $\xi=2s+2$;  also the left most dot on the line $\xi=2s+2$ is to the left of the left most dot on the line $\xi=2s$. Notice that the overlap contains $\rho$ lines (through black squares) $\xi=2s$ with $n-\rho<s\leq n$.
  \end{proposition}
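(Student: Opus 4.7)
The plan is to reduce the statement to a geometric bookkeeping of boundary heights, using that each $\mathbb{L}$-particle on the line $\{\xi=2s\}$ corresponds to a unit drop of the random surface $h$ and hence to one crossing of that line by a level curve. Thus the total number of particles on $\{\xi=2s\}$ equals $h_{\mathrm{bot}}(s)-h_{\mathrm{top}}(s)$, where $h_{\mathrm{top}}(s)$ and $h_{\mathrm{bot}}(s)$ are the heights at the upper and lower boundary endpoints of the line on $\partial(A\cup B)$. Because the restriction of $h$ to $\partial(A\cup B)$ is fixed by Figure~\ref{FigLevelline} (with $h=0$ on the upper edges and $h=2n$ on the lower edges) regardless of the tiling, every count in the proposition can be read off from the geometry of the double diamond alone.

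The first step is to walk around $\partial(A\cup B)$ and record the heights using the edge rule of Figure~\ref{FigLevelline}. One finds that $h$ climbs monotonically from $0$ on the top edges to $n$ at the leftmost vertex of $A$, continues up to $2n$ on the lower edges, and by symmetry descends from $2n$ back to $0$ along the right side of $B$; at each of the two re-entrant corners where $\partial A$ meets $\partial B$ inside the double diamond the height is exactly $n$. Consequently the $n$ level curves with values $\frac{1}{2},\ldots,n-\frac{1}{2}$ (the $A$-curves, blue dots) have both endpoints on the left portion of $\partial(A\cup B)$, while the $n$ curves with values $n+\frac{1}{2},\ldots,2n-\frac{1}{2}$ (the $B$-curves, red dots) have endpoints on the right portion. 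For each $s$ one locates the two endpoints of $\{\xi=2s\}$ on $\partial(A\cup B)$, reads off $h_{\mathrm{top}}(s)$ and $h_{\mathrm{bot}}(s)$, and a direct five-case analysis reproduces the totals $n-s,\,\rho,\,\rho,\,\rho,\,\rho+s-n$. The blue/red split follows because blues correspond to level values in $(h_{\mathrm{top}}(s),h_{\mathrm{bot}}(s))\cap(0,n)$ and reds to those in $(h_{\mathrm{top}}(s),h_{\mathrm{bot}}(s))\cap(n,2n)$; the spatial ordering of blue relative to red in the overlap is then immediate from the fact that $h$ is monotone along $\{\xi=2s\}$, crossing the threshold $h=n$ exactly once.

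Finally, the interlacing is a consequence of the non-intersection of the DR paths (cf.\ Figure~\ref{Fig:Smallsims}). Adjacent $A$-curves at heights $k\pm\tfrac{1}{2}$ produce consecutive dots on $\{\xi=2s\}$, and since the shift of their crossings from $\{\xi=2s\}$ to $\{\xi=2s+2\}$ is bounded by one lattice step, a third neighbour forces the standard Gelfand--Tsetlin-type interlacing of the crossing positions. The same reasoning applied to the $B$-curves yields the red interlacing, and the statements about the extreme (left- and right-most) dots in the overlap follow upon noting that the level curve gained (respectively lost) between $\{\xi=2s+2\}$ and $\{\xi=2s\}$ must terminate on $\partial(A\cup B)$ at a point whose position on the line lies, by the monotonicity of the DR paths, further out than all of the preserved levels. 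The main delicate point is the correct identification of the boundary heights at the re-entrant corners of the overlap and the corresponding endpoints of the $A$- and $B$-level curves; once this is settled, both the counts and the interlacing follow directly from non-crossing of the DR paths.
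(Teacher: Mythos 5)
Your first step --- counting the dots on each line $\{\xi=2s\}$ as a fixed boundary height difference, splitting blue from red according to whether the level value lies below or above $n$, and placing the red dots to the left of the blue ones in the overlap by monotonicity of $h$ along the line --- is correct and is essentially the paper's Lemma \ref{dots1}, proved the same way.

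The interlacing part, however, has a genuine gap. You reduce it to two assertions: that the level curves do not cross, and that the crossing point of a given level curve ``shifts by at most one lattice step'' between $\{\xi=2s\}$ and $\{\xi=2s+2\}$. The second assertion is precisely the nontrivial combinatorial content of the proposition, and you do not prove it; it does not follow from non-intersection alone, and whether it holds (and in which direction a shift is permitted) depends on which local domino configurations around a pair of black squares on consecutive lines are actually admissible. That is exactly what the paper establishes by exhaustive case analysis: Lemma \ref{dots2} (Figures \ref{FigD}, \ref{FigC}) shows that the $\ell$-th dot from the right on $\{\xi=2k+2\}$ lies weakly to the left of the $\ell$-th dot on $\{\xi=2k\}$, and the argument with the squares $A,A',B,B'$ (Figures \ref{FigA}, \ref{FigB}) shows that between two consecutive dots on $\{\xi=2\alpha\}$ there is at least one dot on $\{\xi=2\alpha+2\}$; only the conjunction of these two one-sided statements with the counts of Lemma \ref{dots1} yields the interlacing. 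Moreover, even granting your displacement bound, combining it with non-crossing only gives, for the ordered crossing positions $p_j(\cdot)$, an inequality of the form $p_{j-1}(s)\le p_j(s+1)\le p_{j+1}(s)$, which is weaker than what is claimed; the phrase ``a third neighbour forces the standard Gelfand--Tsetlin-type interlacing'' is not an argument. The same criticism applies to your treatment of the extreme dots in the overlap, where the claim that the gained or lost level curve ``terminates further out than all of the preserved levels'' is again asserted rather than derived. To close the gap you would need to carry out the local configuration analysis (or an equivalent step-by-step analysis of the DR paths) that these assertions currently stand in for.
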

 
 Figure~\ref{Figinterlace} shows the above proposition schematically for the example tiling in Figure~\ref{Fig:Smallsims}. The proposition will be proved in section
\ref{Sec:Interlacing}.

 \begin{figure}
 \begin{center}
  \includegraphics[height=3in]{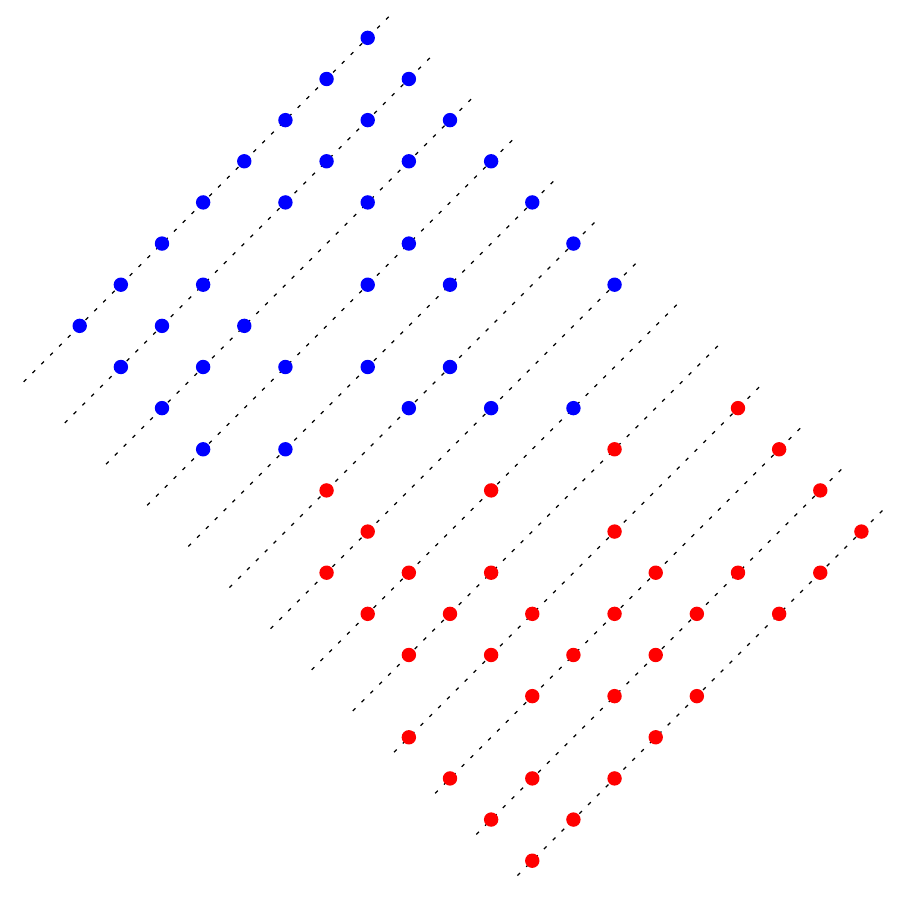}
  \caption{The interlacing system of blue and red dots. The $\rho+1$ lines, $\{\xi=2s\}\in A\cap B$ and $\{\xi=2(n-\rho)\} \in A$, contain $\rho$ dots. All the other lines contain more dots.  See Proposition~\ref{Interla} for more details on the interlacing.} 
 \label{Figinterlace}
 \end{center}
 \end{figure}

It is interesting to notice that, passing from the ${\mathbb L}
 _{n,\rho}$-process to the ${\mathbb K}
  _{n,\rho}$-process, the dot is maintained in the horizontal dominos, whereas a dot in an East domino gets replaced by a dot in a West domino; compare the pictures given in Figure~\ref{FigKLparticles}.

 Recall the $\BK_{n,\rho}$-point process is a process of dots along the lines $\eta=2k-1$ or what is the same $z=2k $. For the sake of the main theorem below, the point of view will be switched around: namely, the $\BK_{n,\rho}$-process induces a determinantal process of dots along the lines $\xi=0$ up to $\xi=2(2n-\rho)$, inherited from the dots on the lines $z=2k$. As mentioned, this process can be obtained from the $\BL_{n,\rho}$-process by keeping the dots belonging to the horizontal domino's and moving the dots from the vertical domino's with a black square below to the vertical ones with a black square above; see Figure~\ref{FigKLparticles}.

\subsection{ The Tacnode GUE-minor kernel and the main Theorem}  
We now define a new kernel, the coupled GUE-minor kernel, depending on two parameters $\beta,\rho$: 
 \footnote{The subscript $\geq -\rho$ refers to the space $\ell^2(-\rho,\ldots,\infty)$.}
\be
\begin{aligned}
 \BK_{\beta, \rho}^  {\mbox{\tiny tac}}& (u_1,y_1;u_2,y_2)
\\ = & \BK^ {\mbox{\tiny minor}}  (u_1,\beta-y_1 ;~u_2 ,\beta-y_2 ) \\&~~+2\Bigl\la (\Id - {\cal K}^\beta ( \lambda ,\kappa ))^{-1}_{\geq -\rho} ~{\cal A}^{\beta,y_1-\beta}_{u_1 }(\kappa), {\cal B}^{\beta,y_2-\beta}_{u_2 }(\lambda)\Bigr\ra
  _{_{ \geq -\rho }}
   \label{2min0}\end{aligned}\ee
  where $\BK^ {\mbox{\tiny minor}}  (n,x;n',x')  $ is the GUE-minor kernel, defined for $n,n'\in \BZ$, rather than $\mathbb N$. 
This kernel will appear below as the appropriate scaling limit of the $\BL$-particle kernel.
Here and below we shall define functions, which involve integration over small circles $\Gamma_0$ and an imaginary line $L:=0^{+}+i\BR\! \uparrow$; the line $L$ needs to be always to the right of the contour $\Gamma_0$. Define
\be
\begin{aligned}
 \BK^ {\mbox{\tiny minor}}  (n,x;n',x')   
 := &-\Id_{n>n'}2^{n-n'}\BH^{n-n'}(x-x')
\\
&+\frac{2}{(2\pi \I)^2}
\int_{\Gamma_0}dz\int_{L} \frac{dw}{w-z}
 \frac{e^{-z^2+2zx}}{e^{-w^2+2wx'}}\frac{w^{n'}}{z^n}
,\end{aligned}
\label{8.6}\ee
with  $\BH^{m}(z)$ defined for $m\geq 1$ as
\be
\begin{aligned}
   \BH^{m}(z)&:=\frac{z^{m-1}}{(m-1)!}\Id _{z\geq 0}.
 \end{aligned} \label{Hm} \ee
Kernel (\ref{2min0}) contains the functions:
  \be\begin{aligned}
{\cal K}^\beta (\lambda,\kappa)&:=\oint_{\Gamma_0}  \frac{d\zeta} {(2\pi \I)^2 }\int_{ L} \frac{d\om}{ \om-\zeta   }
   \frac{e^{-2\zeta^2+4 \beta \zeta }} { e^{-2\omega^2+4 \beta \omega }}
\frac{\zeta^{\kappa } }{\omega^{\lambda +1}} 
\\
   {\cal A}^{\beta,y }_{v }(\kappa)&:=  
 ~
  \oint_{\Gamma_0}  \frac{d\zeta}{ (2\pi \I)^2} 
   \!\! \int_{ L} \frac{d\om}{\zeta\!-\! \om  }
  \frac{e^{-\zeta^2-2  y  \zeta}} 
  {e^{-2\om^2+4 \beta \om}}  
  \frac{\zeta^{-v }}{\om^{\kappa +1}} 
%
+
  \int_{L}\frac{d\zeta}{2\pi \I}\frac{e^{\zeta^2-2\zeta (y +2\beta) }}
  {\zeta^{v+\kappa +1 }}
 %
\\
{\cal B}^ {\beta,y }_{u}(\lambda)&:= 
  \oint_{\Gamma_0}  \frac{ d\zeta}{ (2\pi \I)^2}   \int_{ L} \frac{d\om}{ \zeta\!-\!\om   }
  ~~\frac{e^{-2\zeta^2+4\zeta \beta }} {e^{- \om^2-2\om  y    }} 
  \frac{\zeta^{\lambda  }}{\om^{-u  }}
%
+\oint_{\Gamma_0}\frac{d\omega}{2\pi \I} ~~
 \frac{\omega^{u +\lambda  }}
  {e^{ \omega^2-2 \omega (y +2\beta)}}.\end{aligned}
  \label{defAB}\ee %
To be precise in the limit theorems we should replace $\Id _{z\geq 0}$ by $\Id _{z>0}+\frac 12 \Id_{z=0}$ in the case of the $\mathbb{L}$-process, (\ref{limL})
below, and by $\Id _{z>0}$ in the case of the $\mathbb{K}$-process, (\ref{limK}) below. Since these changes do not affect the limiting point process we will ignore this fine point. Some properties of the kernel are given in section \ref{Sec:GUEminorproperties}. 
 Notice that the scaling in the Theorems below could have been derived from the scaling used in the limit of the $\BK$-kernel to the tacnode process, combined with the way the weight $a\to 1$. \newline \noindent The main statement of the paper reads as follows.
\begin{theorem} \label{a=1}Let the  size $n=2t+\epsilon$, $\epsilon\in\{0,1\}$, 
of the diamonds go to infinity, while keeping the overlap $\rho=n-2m$ finite and, and letting 
the weight of the vertical domino's $a\to 1$ as  
$$
a= 1-\frac {  \beta}{\sqrt{n/2}} , ~~\mbox{with $\beta \in \mathbb{R} $ fixed.}
$$
The 
 coordinates $(\xi,\eta)$ are scaled as follows,
\be \begin{aligned}
\xi_i = 4t+2\epsilon-2 u_i &,& \eta_i =2t  +   2[y_i \sqrt{t }]-1 ,~~\mbox{with}~ u_i \in \mathbb{Z} \ , y_i \in \BR.
\end{aligned}
\label{scK}\ee
With this scaling, the following limit holds:
\be
\begin{aligned}
 \lim_{n \to\infty}  ~(-a)^{(\eta_1-\eta_2)/2}(-\sqrt{t})^{(\xi_1-\xi_2)/2} &
   {\mathbb L} _{n,\rho}
    (\xi_1,\eta_1;\xi_2,\eta_2) \sqrt{t}
\\&~~~~~=\BK_{\beta,\rho}^{\mbox{\tiny tac}} (u_1,y_1;u_2,y_2)
 .\end{aligned}
 \label{limL}\ee
We also have that the rescaled $\mathbb{L}$-particle process converges weakly to the determinantal
point process given by the tacnode GUE-minor kernel.
\end{theorem}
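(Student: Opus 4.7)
The plan is to start from the explicit formula \eqref{Lkernel} for $\mathbb{L}_{n,\rho}$ in Theorem \ref{main1'}, insert the double contour integral representations of $\mathbb{L}^{(0)}_{n}$, of the functions $A_{\xi,\eta}$, $B_{\xi,\eta}$, and of the kernel $K_n$, and perform a steepest descent analysis under the scaling \eqref{scK} together with $a=1-\beta/\sqrt{t}$. The prefactor $(-a)^{(\eta_1-\eta_2)/2}(-\sqrt t)^{(\xi_1-\xi_2)/2}\sqrt t$ is built precisely to absorb the conjugation factors produced by a change of variables $z\mapsto z/\sqrt t$, $w\mapsto w/\sqrt t$ in the contour integrals, and to multiply by the Jacobian $\sqrt t$ that converts the discrete kernel into a density in the $y$-variable.

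First I would treat the ``bulk'' piece $(1+a^2)\,\mathbb{L}^{(0)}_{n}$. After the substitution $\xi_i=4t+2\epsilon-2u_i$, $\eta_i=2t+2[y_i\sqrt t]-1$, the dominant exponential in the integrand has critical point at $z=w=1$, and rescaling both variables around this saddle turns the double integral into the second (``regular'') term of $\mathbb{K}^{\mbox{\tiny minor}}$ in \eqref{8.6}, with the shift $y\to\beta-y$ coming from the expansion $a^{\eta/2}\sim e^{-\beta y}$ of the prefactor times the exponential. The indicator part $-\mathbf 1_{u_1>u_2}2^{u_1-u_2}\mathbb H^{u_1-u_2}(y_2-y_1)$ emerges from the discrete residue that is present in $\mathbb L^{(0)}_n$ whenever $\xi_1<\xi_2$; this is a finite sum of residues and is handled by a direct Laplace expansion of each term.

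Next I would handle the finite-rank correction. The key observation is that the summation index $k\ge n-\rho+1$ that appears in the inner product in \eqref{Lkernel} should be shifted by writing $k=n+\kappa$, so that the relevant sum runs over $\kappa\ge -\rho$, which matches the range $\ge-\rho$ appearing in \eqref{2min0}. Under the same rescaling of the integration variables and under this shift, the contour integrals defining $K_n(k,\ell)$, $A_{\xi,\eta}(k)$, $B_{\xi,\eta}(k)$ converge pointwise in $\kappa,\lambda$ to $\mathcal K^\beta(\lambda,\kappa)$, $\mathcal A^{\beta,y_1-\beta}_{u_1}(\kappa)$, $\mathcal B^{\beta,y_2-\beta}_{u_2}(\lambda)$, respectively, where the two-term structure of $\mathcal A$ and $\mathcal B$ reflects the residues picked up when the imaginary contour $L$ is deformed past the circle $\Gamma_0$. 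The factor of $2$ in front of the inner product in \eqref{2min0} is produced by $(1+a^2)=2+O(1/\sqrt t)$. To promote pointwise convergence of $K_n$, $A$, $B$ to convergence of the inner product $\langle (I-K_n)^{-1}_{\ge n-\rho+1}A,B\rangle$, I would establish uniform exponential decay estimates in $\kappa$ and $\lambda$ by deforming the $z$-contour just outside the unit circle and the $w$-contour just inside it, so that the kernels lie in trace class on $\ell^2(\ge -\rho)$ uniformly in $n$; dominated convergence then yields convergence of the resolvent and of the inner product.

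Finally, to obtain weak convergence of the rescaled point process I would show that on every bounded box $\{u\}\times[-M,M]$ the correlation kernels, viewed as integral operators after the standard Riemann sum identification $y=(\eta-2t+1)/(2\sqrt t)$, converge in trace norm; this is done by combining the pointwise convergence above with uniform Gaussian-type decay estimates in $y_1,y_2$ obtained from the same deformed contours. Convergence of all Fredholm determinants \eqref{I2} on bounded intervals then follows and, together with the determinantal structure, implies weak convergence of the process in the usual vague topology on locally finite configurations. The main obstacle I expect is the control of the resolvent $(I-K_n)^{-1}_{\ge n-\rho+1}$ uniformly in $n$: one must show that $\|K_n\|$ stays bounded away from $1$ (equivalently, that $1$ is not a limiting eigenvalue), which requires quantitative bounds on the contour integrals near the saddle $z=w=1$ where several exponentially decaying factors degenerate to $1$.
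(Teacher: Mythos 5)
Your overall strategy coincides with the paper's: split the conjugated kernel into the indicator/single-integral piece, the double-integral piece of $\mathbb{L}^{(0)}_n$, and the resolvent inner product (the paper's $C^{(1)},C^{(2)},C^{(3)}$), rescale the contour integrals, take limits term by term, and control the tails of the unbounded contour. However, there is one concrete error that would derail the computation as you describe it: the critical point is \emph{not} at $z=w=1$. With the scaling (\ref{scK}) the exponentially large factor in $\mathbb{L}^{(0)}_n$ is $\bigl((1+az)(z-a)\bigr)^{t}$ up to lower-order corrections, whose critical point is at $z^{*}=-(1-a^{2})/(2a)\approx -\beta/\sqrt{t}$, i.e.\ at the origin on the scale $1/\sqrt{t}$. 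The paper accordingly substitutes $z=-\zeta/\sqrt{t}$, $w=-\omega/\sqrt{t}$ on contours of radius $O(1/\sqrt{t})$, so that $(1+az)^{t\pm y\sqrt t}(z-a)^{t\mp y\sqrt t}$ becomes $F_{\pm y,t}(\zeta)\to e^{2\beta\zeta-\zeta^{2}}e^{\pm 2y(\beta-\zeta)}$. Crucially, the exponent of the pole at the origin, $z^{\,n-\xi_1/2}=z^{\,u_1}$, is \emph{finite} under (\ref{scK}), and it is the coalescence of the saddle with this finite-order pole that produces the factors $\zeta^{-u_1}$, $\omega^{u_2}$ and hence the GUE-minor structure of (\ref{8.6}). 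Expanding around $z=w=1$ (where $(1+az)(z-a)=1-a^{2}\to 0$, so this is not even a maximum of the modulus) would lose the $\zeta^{-u}$ factors entirely and cannot yield $\BK^{\mbox{\tiny minor}}$.

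Two smaller remarks. First, the heavy trace-class machinery you invoke for the correction term is not needed: as shown in Section \ref{Sec:GUEminorproperties}, ${\cal B}^{\beta,y_2-\beta}_{u_2}(\lambda-\rho)=0$ for $\lambda\geq\rho$, so the inner product is a \emph{finite} sum already in the limit, and the paper only needs entrywise convergence of $\tilde A$, $\tilde B$ and of the resolvent of $\tilde K_{2t+\epsilon}$ after the index shift $k=\lambda+n-\rho+1$ (your shift $k=n+\kappa$, $\kappa\geq-\rho$, is off by one). Second, the tail control on the unbounded part of the $\omega$-contour is obtained in Lemma \ref{Est} from the polynomial bound $|F_{x,t}(\beta+\I s)|^{-1}\leq(1+s^{2k}/2^{k}k!)^{-1}$ with $k$ chosen so that $2k>u_2$, rather than by pushing contours off the unit circle; your alternative could work but you would still need to justify that $\|K_n\|<1$ uniformly, which the finite-rank observation largely sidesteps.
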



Remember the remark at the end of subsection \ref{subs1.2}.  The $\BK$-process induces a process of particles along the consecutive lines $\{\xi=2s\}$. This is to say the roles of $2r$ and $x$ in the kernel $\BK_{n,\rho}$ get reversed from the point of view of scaling: the variable $2r$ turns into the discrete variable $u$ and the variables $x$ into the continuous variable $y$. The simulations of Figure \ref{Figsim2} show that the lines $\{\xi=2s\}$ belonging to the overlap $A\cap B$ contain long dense stretches of $\BK$-particles. But performing a random thinning, one nevertheless is led in the limit to a point process kernel, which turns out to be the same tacnode GUE-minor kernel, except for some shift. This is the content of the next Theorem.


\begin{theorem}\label{Ka=1}
Let $a$, $n$ and $\rho$ be as in the previous theorem and 
consider the same scaling as above, but expressed in the $(x,z)$-coordinates, using the map (\ref{K}),
\be \begin{aligned}
2x_i
=-\rho +2u_i- \epsilon+ 2[y_i\sqrt{t  }], 
~~~~~2r_i =2t+  2[y_i\sqrt{t }].
  \label{sc1}
\end{aligned}
\ee
  %
Let the ${\mathbb K}$-particles be thinned out at the rate $p_n=1-2/ \sqrt{t}$. We then have the following limit: 
 \be
\begin{aligned}
 \lim_{n\to \infty}      (1-p_n)a
  ^{r_2-r_1}
    (\sqrt{t} )^{ {x_1-x_2-r_1+r_2 } }
& (-1  )^{x_1-x_2}
  \BK _{n,\rho} (2r_1,x_1;2r
_2,x_2)\sqrt{t}
\\
 &
~~=  \BK_{\beta,\rho}^  {\mbox{\tiny tac}} (u_2+1,y_2;u_1,y_1).
  %
\end{aligned}
\label{limK}\ee
Interpreted as a weak limit of a point process this means that
if we thin the $\mathbb{K}$-process by removing each $\BK$-particle independently with probability $p_n$, then the
resulting point process converges weakly to a determinantal point process given by the correlation kernel on the right hand side of (\ref{limK}). %
\end{theorem}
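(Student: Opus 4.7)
The plan is to start from the explicit representation (\ref{K1}) of the $\mathbb{K}$-kernel in Theorem \ref{main2'}, apply the scaling (\ref{sc1}) together with the weight choice $a=1-\beta/\sqrt{n/2}$, and perform steepest-descent analysis on the double-contour integral formulas. Thinning a determinantal point process with retention probability $1-p_n$ multiplies its correlation kernel by $(1-p_n)$, and the factor $(1-p_n)=2/\sqrt{t}$ is precisely the density scaling needed to pass from a discrete process on black squares of spacing $O(1)$ to a process continuous in $y\in\BR$. One must identify two separate limits: that of the one-Aztec-diamond kernel $\mathbb{K}^0_n$ with the minor kernel $\BK^{\mbox{\tiny minor}}$ in (\ref{8.6}), and that of the finite-rank perturbation with the bilinear form in (\ref{2min0}).

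First, for the leading piece, I substitute (\ref{sc1}) into the known double-integral formula for $\mathbb{K}^0_n(2r_1,x_1;2r_2,x_2)$ and rescale the contour variables $\zeta,\omega\to 1+\tilde\zeta/\sqrt{t}$, $1+\tilde\omega/\sqrt{t}$. The local expansion $a^{\mathrm{pow}}=\exp(-\beta\cdot\mathrm{pow}/\sqrt{t}+O(1/t))$ produces the exponent $e^{-z^2+2zx}/e^{-w^2+2wx'}$ together with the factor $w^{n'}/z^n$ after absorbing the prefactors $a^{r_2-r_1}\sqrt{t}^{\,x_1-x_2-r_1+r_2}(-1)^{x_1-x_2}\sqrt{t}$ present in (\ref{limK}); these prefactors are precisely the conjugations required to remove oscillation in the saddle expansion. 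The Heaviside-polynomial term $\Id_{n>n'}2^{n-n'}\BH^{n-n'}$ arises from the residue at $w=z$ picked up when deforming the $w$-contour across the $z$-contour. The argument swap $(u_2+1,y_2;u_1,y_1)$ on the right-hand side reflects the fact that, under (\ref{sc1}), the line index is controlled by $r$ (tied to $y$) while $u$ measures a discrete offset in $x$, so the roles of ``row'' and ``column'' indices are swapped with respect to the $\mathbb{L}$-scaling (\ref{scK}); the shift $u_2\mapsto u_2+1$ is a parity artefact of the coordinate map (\ref{K}) combined with the integer floor in $[y_i\sqrt{t}]$.

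Next, for the perturbation, I reindex the summation variable in $\langle\cdot,\cdot\rangle_{\ge n-\rho+1}$ by $k=n+\kappa$, so that $\{k\ge n-\rho+1\}$ becomes $\{\kappa\ge -\rho\}$, matching the inner-product range in (\ref{2min0}). It then suffices to show separately that, under the same rescaling, $K_n\to\mathcal{K}^\beta$, $a_{-y_2,s_2}(k)\to \mathcal{A}^{\beta,y_2-\beta}_{u_2}(\kappa)$, and $b_{-x_1,r_1}(k)\to \mathcal{B}^{\beta,y_1-\beta}_{u_1}(\lambda)$; each of these is a direct steepest-descent computation on the integral formulas of \cite{AJvM}, with the $-\beta$ shift inside the $y$-arguments of $\mathcal{A},\mathcal{B}$ produced by absorbing the linear-in-$\omega$ contribution of the $a$-expansion into the quadratic exponent $e^{-2\omega^2+4\omega\beta}$ appearing in (\ref{defAB}). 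Collecting the three pointwise limits and inverting $(I-\mathcal{K}^\beta)$ on $\ell^2(\{-\rho,-\rho+1,\dots\})$ reproduces the bilinear-form term of (\ref{2min0}), again with the $(u_2+1,y_2;u_1,y_1)$ labelling inherited from the leading-piece analysis.

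The main obstacle is upgrading these pointwise saddle-point limits to convergence in trace (or Hilbert-Schmidt) norm, which is required both to commute the limit with the resolvent $(I-K_n)^{-1}_{\ge n-\rho+1}$ and to deduce weak convergence of the thinned point process from convergence of Fredholm determinants. Since the index set $\kappa\ge-\rho$ is bounded below by a fixed integer independent of $n$, once the contours are deformed along the steepest-descent directions the integrand decays exponentially uniformly in $\kappa$, yielding dominated-convergence bounds analogous to those carried out in \cite{AJvM} for the tacnode limit of the untinned $\mathbb{K}$-process. Combining this with the standard equivalence between convergence of Fredholm determinants over bounded cylinder sets and weak convergence of determinantal point processes on the locally compact state space $\mathbb{Z}\times\mathbb{R}$ completes the proof of (\ref{limK}) and of the weak convergence statement.
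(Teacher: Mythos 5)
Your overall strategy is the paper's: split the explicit kernel (\ref{K1}) into its single‐integral, double‐integral and resolvent pieces, take term‐by‐term saddle‐point limits under the scaling (\ref{sc1}) with $a=1-\beta/\sqrt{t}$, use $(1-p_n)=2/\sqrt{t}$ as the density normalization coming from thinning, and finish with uniform bounds and Fredholm‐determinant convergence. The paper executes this more economically: it shows that the rescaled, prefactor‐multiplied $\BK$-kernel is given by exactly the same three integrals $C^{(1)},C^{(2)},C^{(3)}$ used for the $\BL$-kernel, only with a parameter $\delta=1$ in place of $\delta=0$ and with arguments $(u_2+1,y_2;u_1,y_1)$, so that no new asymptotic analysis is needed beyond the $\BL$ case; your plan would redo the steepest descent directly on (\ref{K1}), which is the same computation in a less efficient packaging.

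Two corrections are needed, one of which matters. The shift $u_2\mapsto u_2+1$ is \emph{not} ``a parity artefact of the coordinate map combined with the integer floor'': the paper explicitly discards the integer parts as irrelevant to the limit. The shift is forced by exact prefactor bookkeeping. Under (\ref{sc1}) one has $x_1-x_2+r_2-r_1=u_1-u_2$, and matching $(1-p_n)\,a^{r_2-r_1}(\sqrt{t})^{u_1-u_2}(-1)^{x_1-x_2}\cdot\sqrt{t}$ against the normalization template used for the $\BL$-kernel (which carries $(-\sqrt{t})^{u_2'-u_1'}$ together with one factor of $\sqrt{t}$ per continuous variable) leaves exactly one uncancelled power of $\sqrt{t}$, which can only be absorbed by taking the first discrete argument to be $u_2+1$; since this shift is the distinctive content of Theorem \ref{Ka=1} relative to Theorem \ref{a=1}, it has to be derived from this bookkeeping, not postulated. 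Secondly, your concern about trace‐norm convergence for the perturbation term is misplaced: from (\ref{defAB}), ${\cal K}^\beta(\lambda,\kappa)=0$ for $\kappa\ge 0$ and ${\cal B}^{\beta,y_2-\beta}_{u_2}(\lambda-\rho)=0$ for $\lambda\ge\max(\rho,\rho-u_2)$, so the inner product is a finite sum and the resolvent a finite matrix inverse, and entrywise convergence suffices there; uniform $t$-independent bounds are only needed for the Fredholm determinant and the weak convergence of the thinned process. Minor slips that would surface in execution: the reindexing $k=n+\kappa$ sends $\{k\ge n-\rho+1\}$ to $\{\kappa\ge 1-\rho\}$, not $\{\kappa\ge-\rho\}$, and the Heaviside term of $\BK^{\mbox{\tiny minor}}$ arises as the limit of the separate single‐integral $\psi$-term in (\ref{K13}), not from a residue picked up by deforming the $w$-contour across the $z$-contour.
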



Notice the kernel $\BK_{\beta,\rho}^  {\mbox{\tiny tac}}$ is the same as the one in Theorem \ref{a=1}, except for the shift in $u_2$ and the flip $u_1\leftrightarrow u_2$ and $y_1 \leftrightarrow y_2$.

The geometry of the level curves for the double Aztec diamond, when $n\to \infty$, looks as in Figure~\ref{Figsim1} below.  The $\mathbb{K}$ and $\mathbb{L}$ particle processes have also been plotted for this simulation and is found in Figure~\ref{Figsim2} below.  As $n \to \infty$, the particles take continuous values on each line; they are constrained by the same interlacing as in Proposition \ref{Interla}. 

\begin{figure}
 \includegraphics[height=3in]{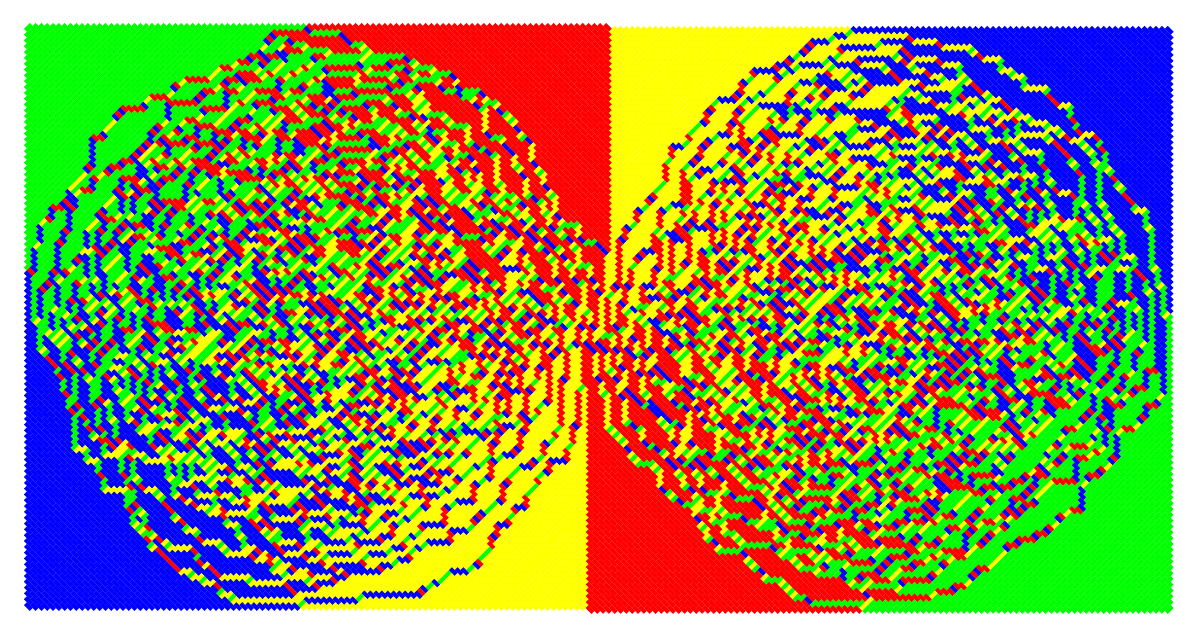}
 \\
 \includegraphics[height=3in]{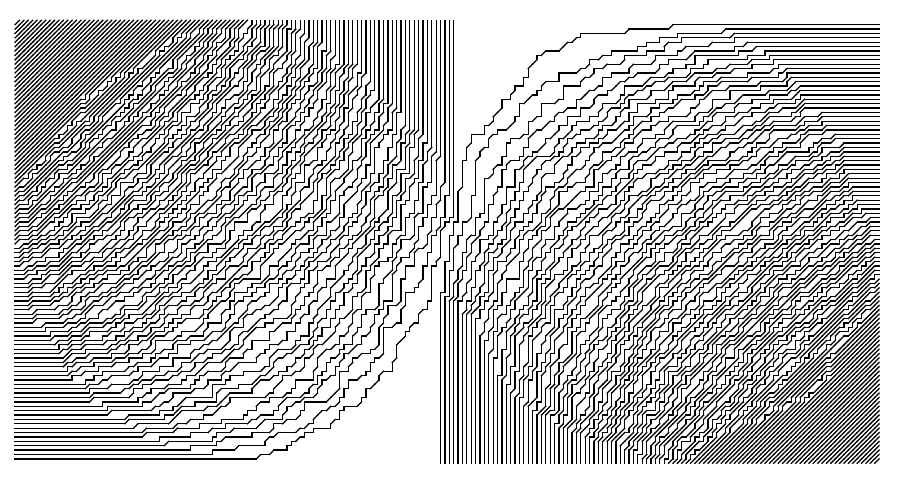}
 \caption{A simulation of a double Aztec diamond with $n=100$ and $\rho=4$ with the weight $a$ of vertical and horizontal tiles equal to $1$.  Both figures are rotated by $\pi/4$ counter-clockwise.  For this simulation, the top figure  shows the underlying domino tiling while the bottom figure shows the level lines. The simulation was made using the generalized domino shuffle~\cite{Pro:03}.} 
 \label{Figsim1}
\end{figure}

\begin{figure}
 \includegraphics[height=3in]{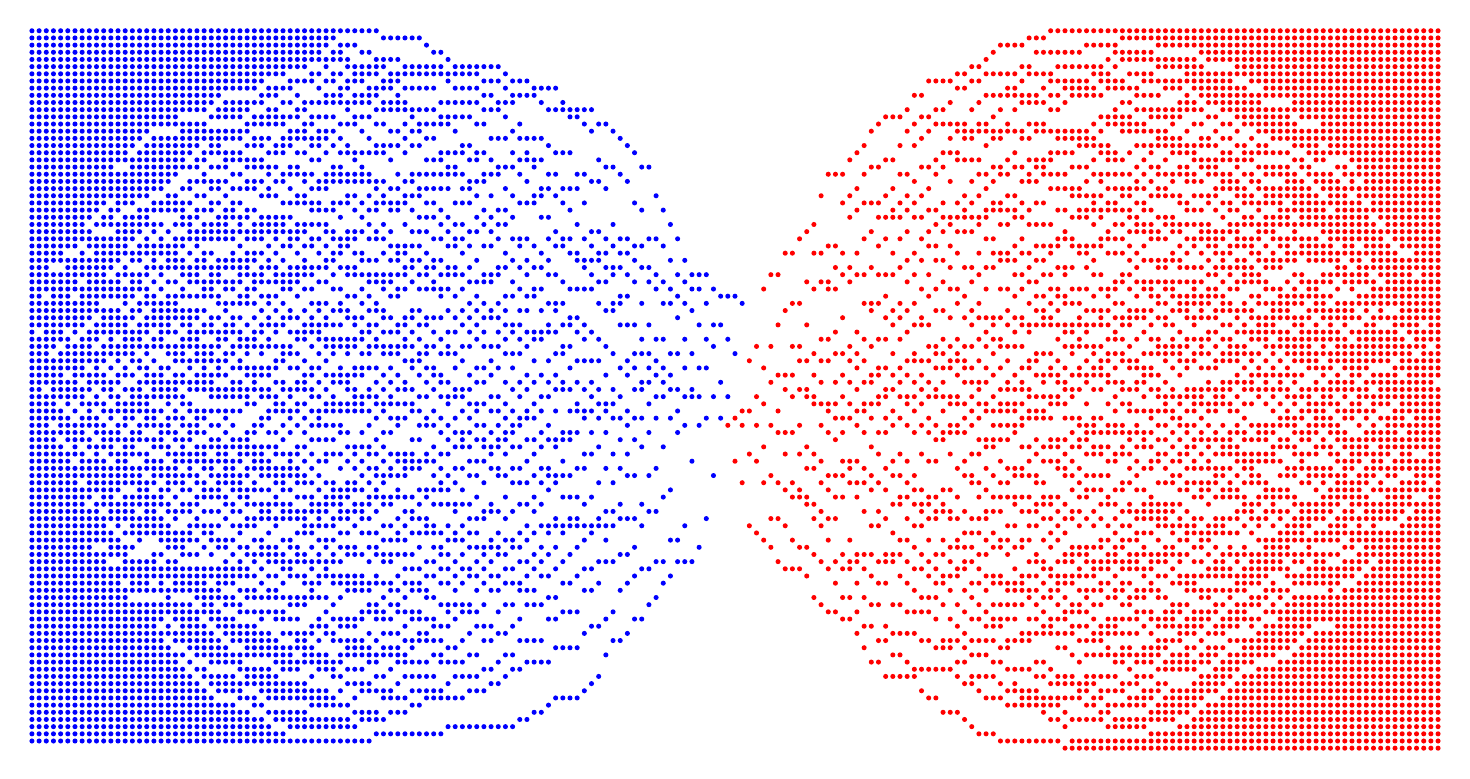} \\
 \includegraphics[height=3in]{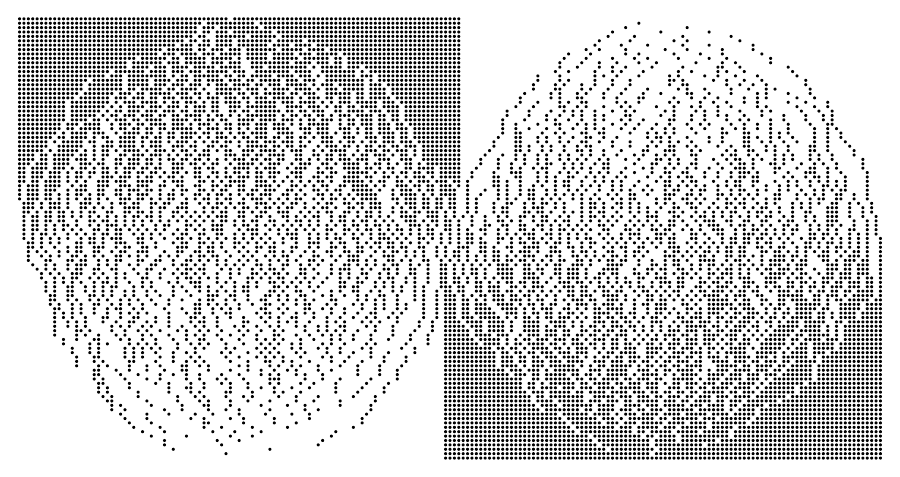}
 \caption{ The top picture shows the blue and red $\mathbb{L}$-particles for the simulation of Figure~\ref{Figsim1}: a particle is created each time a line $\{\xi=2s\}$ traverses a blue or green domino, as in Figure \ref{Figsim1} (or, alternatively, crosses a level line); then blue particles belong to an $A$-level line and a red particles to a $B$-level line.    
  The bottom picture shows the $\mathbb{K}$-particles for the same simulation in Figure~\ref{Figsim1}:  a particle is created each time a line $\{\eta=2r+1\}$ traverses a red or green domino, as in Figure \ref{Figsim1} (or, alternatively, crosses a level line). 
  }
 \label{Figsim2}
\end{figure}

\section{The kernel for the $\BL$-process, via Kasteleyn}\label{Kernels}


 \subsection{The $\BL$-particle process}

\noindent The kernel $ {\mathbb L} _{n,\rho}$ for the $\BL$-process is given by formula (\ref{Lkernel}), i.e.,
%
\be
\begin{aligned}
{\mathbb L}_{n,\rho }
 (\xi_1,\eta_1;\xi_2,\eta_2)  =&(1+a^2){\mathbb L}^{(0)}_{n }(\xi_1,\eta_1;\xi_2,\eta_2)
\\  
&- (1+a^2)\langle ((I-{K}_n)^{-1}_{_{\geq n-\rho+1}} A_{\xi_1,\eta_1}) (k), B_{\xi_2,\eta_2}(k) \rangle_{_{\geq n-\rho+1 }},
\end{aligned}
\label{Lkernel1}
\ee
with
$$
\begin{aligned}
\lefteqn{{\mathbb L}^{(0)}_{n} (\xi_1,\eta_1;\xi_2,\eta_2)}\\
& := - \Id_{(\xi_1 < \xi_2)} \int_{\Gamma_{0,a}}  \frac{ dw}{2\pi \I}~\frac{(1+aw)^{(\eta_1-\eta_2)/2-1}}{(w-a)^{(\eta_1-\eta_2)/2+1}} w^{\frac{\xi_1-\xi_2}2} 
\\
&+  \int_{\Gamma_{0,a}} \frac{dz }{(2\pi \I)^2} \int_{\Gamma_{0,a,z}} \frac{dw}{w-z} \frac{(1+az)^{(\eta_1-1)/2}(z-a)^{n-(\eta_1+1)/2}w^{n-\xi_2/2}}{(1+aw)^{(\eta_2+1)/2}(w-a)^{n-(\eta_2-1)/2}z^{n-\xi_1/2}}
\end{aligned}
\label{L0kernel}
$$
\be
\begin{aligned}
 {K}_n(j,k)& = \frac{(-1)^{j+k}}{(2\pi \I)^2}\oint_{\Gamma_{0,a}} dw \oint_{\Gamma_{0,a,w}} \frac{dz}{z\!-\!w} ~ 
 \frac{z^{n-j}(1+aw)^n(w-a )^{n+1}}{ w^{ n+1-k}(1+az)^n (z-a)^{n+1}},
\\
A_{\xi_1,\eta_1} (k) &=
 \frac{(-1)^{k}}{(2\pi \I)^2}\oint_{\Gamma_{0,a}} dz   
\oint_{\Gamma_{0,a,z}} \frac{dw}{w\!-\!z} \frac{(1+az)^{(\eta_1-1)/2}(z-a)^{n-(\eta_1+1)/2}w^{n-k}}{(1+aw)^n(w-a)^{n+1}z^{n-\xi_1/2}}\\
&-\frac{(-1)^k}{2\pi\I}\oint_{\Gamma_{0,a}}\frac{w^{\xi_1/2-k}}{(1+aw)^{n-(\eta_1-1)/2}(w-a)^{(\eta_1+3)/2}}~dw
\\
%
 B_{\xi_2,\eta_2}(k) &=  \frac{(-1)^{k}}{(2\pi \I)^2}\oint_{\Gamma_{0,a}} dw\oint_{\Gamma_{0,a,w}} \frac{dz}{w\!-\!z} \frac{(1+aw)^n (w-a)^{n+1} z^{n-\xi_2/2}}{(1+az)^{(\eta_2+1)/2}(z-a)^{n-(\eta_2-1)/2}w^{n+1-k}}\\
&+\frac{(-1)^k}{2\pi\I}\oint_{\Gamma_{0,a}}(1+az)^{n-(\eta_2+1)/2}(z-a)^{(\eta_2+1)/2}z^{k-1-\xi_2/2}~dz
\label{oneAzt}
 \end{aligned}
\ee

As was shown in \cite{AJvM} the $\BK$-particles form a determinantal point process and
 the kernel $\BK_{n,\rho}$ for the $\BK$-process is given in $(z,x)$ coordinates by
 \be
  \begin{aligned}
  {(-1)^{x-y}    \BK
   _{n,\rho}(2r,x;2s,y) }  
 =~& {\mathbb K}_n ^{0}
  \bigl(2r,x;2s,y\bigr) %
 \\  
 &~~ -\left\la(\Id-K_n )_{_{\geq n-\rho+1}}^{-1}a_{-y,2s}(k),b_{-x,2r}(k) \right\ra _{_{\geq n-\rho+1}},
\end{aligned}  %
 \label{K1}\ee
 where $K_n(j,k)$ is defined as before in (\ref{oneAzt}) and where
 \be
 \begin{aligned}
     {\mathbb K}_n ^{0}\bigl(2r,x;2s,y\bigr) &= {\mathbb K}_{n+1}^{\mbox{\tiny \rm OneAzt}}
  \bigl(2(n-r+1 ),m-x+1;2(n-s+1 ),m-y+1\bigr) %
 \\ &=-\Id_{s<r}  (-1)^{x-y}\psi_{2r,2s}(x,y)  +S(2r,x;2s,y) 
 \end{aligned} 
\label{K13}
\ee
\begin{equation}
\begin{split}
& a_{x,2s+\epsilon_1}(k) 
 :=\frac{(-1)^{k-x}}{(2\pi \I)^2}\oint_{\Gamma_{0,a}}du
 \oint_{\Gamma_{0,a,u}}\frac{dv}{u-v}~
 \frac{v^{x+m}}{u^{k+1}}\frac{(1+av)^s(1-\frac av)^{n-s+1-\epsilon_1}}
 {{(1+au)^n}{ (1-\tfrac au)^{  n+1 }}
 }\\
&\, = \frac{(-1)^{k-x}}{(2\pi \mathrm{i})^2} \oint_{\Gamma_{0,a}} dv \oint_{\Gamma_{0,a,v}} \frac{du}{u-v} \frac{v^{x+m}}{u^{k+1}} \frac{ (1+a v)^s (1-\frac{a}{v})^{n-s+1-\epsilon_1}}{(1+a u)^n(1-\frac{a}{u} )^{n+1}}\\
&\, - \frac{(-1)^{k-x}}{2 \pi \mathrm{i}} \oint_{\Gamma_{0,a}} dv \frac{v^{x+m-k-1}}{(1+a v)^{n-s} (1-\frac{a}{v})^{s+\epsilon_1}}\\
  %
\end{split}\end{equation}
\begin{equation}\begin{split}
 & b _{y,2r+\epsilon_2}(\ell) 
   :=\frac{(-1)^{\ell-y}}{(2\pi \I)^2}\oint_{\Gamma_{0,a}}du
 \oint_{\Gamma_{0,a,u}}\frac{dv}{v-u}~
 \frac{v^{\ell}}{u^{y+m+1}}\frac{{(1+av)^n}{ (1-\tfrac av)^{  n+1 }}
 }{(1+au)^r(1-\frac au)^{n-r+1-\epsilon_2}}
 \\
&\, =\frac{(-1)^{l-y}}{(2\pi \mathrm{i})^2} \oint_{\Gamma_{0,a}} dv \oint_{\Gamma_{0,a,v}} \frac{d u}{v-u} \frac{v^l}{u^{y+m+1}} \frac{(1+a v)^n(1-\frac{a}{v})^{n+1}}{(1+a u)^r(1-\frac{a}{u})^{n-r+1-\epsilon_2}} \\
&\, +\frac{(-1)^{l-y}}{2\pi \mathrm{i}} \oint_{\Gamma_{0,a}} dv \frac{ (1+a v)^{n-r} (1-\frac{a}{v})^{r+\epsilon_2}}{v^{y+m-l+1}} \\
& S(2r+\epsilon_1,x; ~2s+\epsilon_2, y)  
 : =\frac{(-1)^{x-y}} {(2\pi \I)^2}\!\oint_{\Gamma_{0,a}}du
 \oint_{\Gamma_{0,a,u}}\frac{dv}{v-u} \\
&\hspace{60mm} \frac{v ^{x-m-1 } }{u ^{y-m } } 
 \frac{(1+au)^s(1-\tfrac au)^{n-s+1-\epsilon_2}}
 {(1+av)^r(1-\tfrac av)^{n-r+1-\epsilon_1}}
\\
&\psi_{2r+\epsilon_1,2s+\epsilon_2}(x,y):= \oint _{\Gamma_{0,a}} \frac{dz}{2\pi iz} z^{x-y}
\frac{(1+az)^{s-r}}{(1-\frac az)^{s-r+\epsilon_2-\epsilon_1}}.
\end{split}\label{K12}
\end{equation}


A single Aztec diamond of size $n$ leads to a determinantal process as well (see   \cite{Johansson3}), for which the kernel is given by the following expression:
\be\begin{aligned}
 \lefteqn{{\mathbb K}_{n }^{\mbox{\tiny OneAzt}}
 (2r,x;2s,y)}\\
 &\!=\!\frac {(-1)^{x\!-\!y}}{(2\pi \I)^2} \oint_{\gamma_{r_3}}du 
  \oint_{\gamma_{r_2}}\frac{dv}{v\!-\!u} 
 \frac{v^{-x}}{u^{1-y}} \frac{(1+au)^{n -s}(1-\tfrac au)^{s}}
 {(1+av)^{n -r}(1-\tfrac av)^{r}}
   -\Id_{s>r}\psi_{2r,2s}(x,y)
 . \end{aligned} \label{OneAztec}\ee 

It is not immediate to go from knowing the kernel for the $\BK$-particle process to the kernel for the $\BL$-particles process. 
To do so we will use the fact that we can get the inverse Kasteleyn matrix for the dimer version of Double Aztec diamond. The inverse
Kasteleyn matrix will be explained in terms of the kernel $\BK_{n,\rho}$. Using the inverse Kasteleyn matrix it is possible
to show that the $\BL$-particles form a determinantal point process and compute the kernel.

 \subsection{The Kasteleyn Matrix}

 Suppose that $G=(V,E)$ is a bipartite graph. A dimer is an edge and a dimer covering is a subset of edges such that each vertex is incident to only one edge.  The dual of the double Aztec diamond is a subset of the square grid graph with a certain boundary condition while a domino tiling of the double Aztec diamond is a dimer covering of its dual graph.    Kasteleyn, in \cite{Kas:61}, introduced a matrix, later named the \emph{Kasteleyn matrix} which one can use to compute the number of domino tilings of the graph.  Since the graph in this paper is bipartite, the Kasteleyn matrix is a type of signed weighted (possibly complex entries) adjacency matrix with rows indexed  by the black vertices and columns indexed by the white vertices.   The sign of the entries is chosen so that the product of the entries of the Kasteleyn matrix for edges surrounding each face is negative.  This is called the \emph{Kasteleyn orientation}.  We will describe the Kasteleyn matrix for the double Aztec diamond below but first we state Kasteleyn's theorem for bipartite graphs and Kenyon's formula \cite{Ken:97}.
 
 Suppose that $K$ denotes the Kasteleyn matrix for a finite bipartite graph $G$.  
\begin{theorem}[\cite{Kas:61}]
The number of weighted dimer coverings of $G$ is equal to $|\det K|$ 
\end{theorem}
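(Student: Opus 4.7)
The plan is to expand $\det K$ combinatorially and use the Kasteleyn orientation to show that every nonzero term in the expansion carries a common phase, so that $|\det K|$ reproduces the weighted sum over perfect matchings. With $B,W$ the black and white vertex sets and $|B|=|W|=N$ (else both sides vanish), I would write
\[
\det K=\sum_{\sigma\in S_N}\mathrm{sgn}(\sigma)\prod_{b\in B}K_{b,\sigma(b)}.
\]
A nonzero summand forces $b\sigma(b)$ to be an edge of $G$ for every $b$, so nonzero permutations biject with perfect matchings $M_\sigma=\{\{b,\sigma(b)\}:b\in B\}$, and the expansion regroups as $\det K=\sum_M \epsilon(M)\,w(M)$, where $w(M)=\prod_{e\in M}|K_e|$ is the positive total weight of $M$ and $\epsilon(M)$ is a unit phase absorbing $\mathrm{sgn}(\sigma)$ together with the signs (or complex phases) of the entries $K_e$.

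The crux is to prove $\epsilon(M)$ is the same for all matchings. Given two matchings $M_0$ and $M_1$, their symmetric difference $M_0\triangle M_1$ decomposes into a disjoint union of simple cycles $C_1,\dots,C_r$ of even lengths $2\ell_i$ whose edges alternate between $M_0$ and $M_1$. A short permutation computation (the permutation $\sigma_1\sigma_0^{-1}$ has cycle type matching $(C_i)$, with each $C_i$ contributing $(-1)^{\ell_i-1}$ to the sign ratio) gives
\[
\frac{\epsilon(M_1)}{\epsilon(M_0)}=\prod_{i=1}^r\Big((-1)^{\ell_i-1}\prod_{e\in C_i}\frac{K_e}{|K_e|}\Big),
\]
so it suffices to prove each cycle factor equals $+1$. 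Here the Kasteleyn orientation enters: by definition the product of phases around each bounded face is $-1$, and a face-induction using planarity (each interior edge of $C_i$ appears in two adjacent faces and cancels) yields that the product around $C_i$ equals $(-1)^{F_i}$, where $F_i$ is the number of faces enclosed by $C_i$. Because $C_i$ is the superposition of two perfect matchings of the subgraph enclosed by $C_i$, the interior vertices are themselves perfectly matched, and a parity argument via Euler's formula on the enclosed disc shows $\ell_i-1+F_i$ is even, exactly cancelling the sign. Hence $\epsilon(M_1)=\epsilon(M_0)$, so $\epsilon(M)\equiv\epsilon$ is a global unit constant and
\[
|\det K|=|\epsilon|\sum_M w(M)=\sum_M w(M).
\]

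I expect the main obstacle to be the final planarity/parity step: one must verify that for every alternating cycle $C_i$ appearing in the symmetric difference of two perfect matchings, the enclosed face count, interior vertex count, and cycle length combine via the Kasteleyn face condition precisely to kill the phase. The essential inputs are planarity of $G$ (which lets one reduce a cycle product to a product of face contributions), bipartiteness (which forces all cycles to have even length), and the existence of a perfect matching on the interior of each alternating cycle (which pins down the parity of the enclosed vertex count). Together, via Euler's formula, they yield the clean cancellation that makes $\epsilon(M)$ independent of $M$ and delivers Kasteleyn's theorem.
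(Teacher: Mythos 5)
The paper does not actually prove this statement --- it is quoted from Kasteleyn's 1961 paper as a black box --- so there is no internal proof to compare against. Your argument is the standard proof of Kasteleyn's theorem for planar bipartite graphs, and its architecture is sound: determinant expansion, bijection of the nonvanishing permutations with perfect matchings, reduction of the sign comparison to the alternating cycles of $M_0\triangle M_1$, the permutation-cycle sign $(-1)^{\ell_i-1}$, a face induction combined with Euler's formula on the enclosed disc, and evenness of the number of enclosed vertices because they are perfectly matched among themselves. The parity bookkeeping also comes out correctly: for quadrilateral faces Euler's formula gives $F_i=v_i+\ell_i-1$, so $(-1)^{\ell_i-1+F_i}=(-1)^{v_i}=1$.

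One step, however, is not right as written, and the error matters for the Kasteleyn matrix this paper actually uses, whose entries are $\pm1$ and $\pm a\mathrm{i}$. Writing $\phi(e)=K_e/|K_e|$, the quantity that appears in $\epsilon(M_1)/\epsilon(M_0)$ on a cycle $C_i$ is the \emph{alternating} product of phases, $\prod_{e\in C_i\cap M_1}\phi(e)\cdot\prod_{e\in C_i\cap M_0}\phi(e)^{-1}$, not the full product $\prod_{e\in C_i}\phi(e)$ that you wrote; the two coincide only when every phase satisfies $\phi(e)^2=1$, i.e.\ for real signings. Correspondingly, in the face induction an interior edge shared by two faces contributes $\phi(e)\cdot\phi(e)^{-1}=1$ to the product of the faces' \emph{alternating} products (the two faces traverse that edge in opposite senses), but it contributes $\phi(e)^2$ --- which equals $-1$ for the $\pm a\mathrm{i}$ edges --- to the product of the full face products, so your claim that ``each interior edge appears in two adjacent faces and cancels'' fails for the full product with complex entries. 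The repair is routine: phrase both the face condition and the cycle lemma in terms of alternating products of phases (equivalently, reduce to a real $\pm1$ orientation after factoring out the moduli), after which the rest of your argument, including the final Euler-formula parity step, goes through verbatim. With that correction the proof is complete for graphs all of whose internal faces are quadrilaterals, as here; for general planar bipartite graphs one should also state the face condition as ``alternating product equals $(-1)^{k+1}$ on a $2k$-gon'' rather than ``the product of the entries around each face is negative,'' the two being equivalent only for $k=2$.
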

Suppose that $E=\{e_i\}_{i=1}^n$ are a collection of distinct edges with $e_i=(b_i,w_i)$, where $b_i$ and $w_i$ denote black and white vertices. 

\begin{theorem}[\cite{Ken:97}] \label{Kenyonsthm}
    The dimers form a determinantal point process on the edges of $G$ with correlation kernel given by
    \begin{equation}
	  L(e_i,e_j) = K(b_i,w_i) K^{-1} (w_i,b_j) 
    \end{equation}
 where $K(b,w) = K_{bw}$ and $K^{-1}(w,b)= (K^{-1})_{wb}$
\end{theorem}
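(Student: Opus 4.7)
The plan is to combine Kasteleyn's theorem (the immediately preceding statement) with a classical complementary-minor identity from linear algebra. First, I would write down the probability that a prescribed set of edges $e_i=(b_i,w_i)$, $i=1,\ldots,n$, all appear simultaneously in a weighted random dimer covering of $G$ as
\[
\mathbb{P}(e_1,\ldots,e_n \text{ covered}) = \frac{\bigl(\prod_{i=1}^n K(b_i,w_i)\bigr)\, Z(G')}{Z(G)},
\]
where $G'$ is obtained from $G$ by deleting the $2n$ endpoints $b_1,w_1,\ldots,b_n,w_n$, and the factors $K(b_i,w_i)$ carry both the edge weight and the Kasteleyn sign. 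Kasteleyn's theorem gives $Z(G)=|\det K|$ and $Z(G')=|\det K'|$, where $K'$ is the submatrix of $K$ obtained by removing the rows $b_1,\ldots,b_n$ and columns $w_1,\ldots,w_n$; keeping track of the Kasteleyn orientation, one in fact obtains the ratio $Z(G')/Z(G)$ as a signed ratio $\det K'/\det K$ without absolute values.

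Next I would invoke the Jacobi complementary-minor identity: for an invertible square matrix $M$ and equal-size index subsets $I,J$,
\[
\det M[I^c;J^c] = (-1)^{\sigma(I,J)} \det M \cdot \det\bigl[(M^{-1})(j,i)\bigr]_{i\in I,\, j\in J},
\]
where $\sigma(I,J)$ depends only on the index positions. Applied with $M=K$, $I=\{b_1,\ldots,b_n\}$, $J=\{w_1,\ldots,w_n\}$, this expresses $\det K'/\det K$ as a minor of $K^{-1}$. Absorbing the factors $K(b_i,w_i)$ into the $i$-th row of this minor yields
\[
\mathbb{P}(e_1,\ldots,e_n \text{ covered}) = \det\bigl[K(b_i,w_i)\, K^{-1}(w_i,b_j)\bigr]_{1\le i,j\le n},
\]
which is exactly the defining property of a determinantal point process on $E$ with correlation kernel $L(e_i,e_j)=K(b_i,w_i)K^{-1}(w_i,b_j)$.

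The main obstacle is the sign bookkeeping. The Kasteleyn orientation was chosen precisely so that every nonzero monomial of $\det K$ contributes with the same sign, giving $Z(G)=|\det K|$ without cancellation; the analogous statement must be verified for $\det K'$ on the truncated graph $G'$, and then the sign $(-1)^{\sigma(I,J)}$ appearing in Jacobi's identity must be shown to cancel exactly against the relative Kasteleyn sign between $G$ and $G'$, so that the final determinantal expression is sign-consistent. This is the content of Kenyon's original argument; once this parity/permutation-sign analysis is carried out, the remaining steps are routine linear algebra and the result follows from the displayed identities.
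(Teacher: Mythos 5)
This theorem is not proved in the paper at all: it is quoted verbatim from Kenyon \cite{Ken:97}, so there is no in-paper argument to compare against. Your outline is the standard (indeed, essentially Kenyon's own) route: write the probability that $e_1,\dots,e_n$ are simultaneously covered as $\prod_i w(e_i)\,Z(G')/Z(G)$ with $G'$ the graph with the $2n$ endpoints deleted, convert both partition functions to determinants via Kasteleyn's theorem, and apply the Jacobi complementary-minor identity to turn $\det K'/\det K$ into an $n\times n$ minor of $K^{-1}$, absorbing the diagonal factors $K(b_i,w_i)$ row by row. That skeleton is correct and is the right way to prove the statement.

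The reservation is that the step you explicitly defer --- checking that the restriction of the Kasteleyn orientation to $G'$ remains a valid Kasteleyn orientation (so that $Z(G')$ is again a determinant with no cancellation among monomials), and that the permutation sign $(-1)^{\sigma(I,J)}$ from Jacobi's identity cancels exactly against the relative sign between $\det K'$ and $Z(G')$ together with the signs carried by the factors $K(b_i,w_i)$ --- is the \emph{entire} nontrivial content of the theorem. Everything else in your write-up is bookkeeping around Kasteleyn's theorem and a classical determinant identity. As it stands the proposal reduces the theorem to its hardest component and then cites Kenyon for that component, so it is an accurate road map rather than a self-contained proof. For the bipartite planar setting used in this paper the sign analysis does go through (one can argue, for instance, that removing a vertex pair joined by an edge changes the face-products consistently, or verify the sign on a single reference matching and invoke the no-cancellation property), but one of these arguments would need to be written out for the proof to stand on its own.
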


The above theorem means that by knowing the inverse of the Kasteleyn matrix, which we call the \emph{inverse Kasteleyn matrix}, we can derive the correlation kernel of the dominos. We can now introduce the Kasteleyn matrix of the double Aztec diamond. 
 
  Let 
\begin{equation}
	\mathtt{W} = \{ (x_1,x_2): x_1 \in 2\mathbb{Z} +1, x_2 \in 2\mathbb{Z} \}
\end{equation}
denote the set of white vertices and let
\begin{equation}
	\mathtt{B} = \{ (x_1,x_2): x_1 \in 2\mathbb{Z} , x_2 \in 2\mathbb{Z} +1 \}
\end{equation}
denote the set of black vertices.  The dual graph of the double Aztec diamond written in $(\xi,\eta)$ co-ordinates has white vertices given by
\begin{equation}
	\mathtt{W}_{AD}=\left\{ (x_1,x_2) \in \mathtt{W} : \begin{array}{l}1 \leq 
	                                   x_1 \leq 2(2m+n)+1, 0 \leq x_2 \leq 2(n-1) \\
	                                   \mbox{ or } 1 \leq x_1 \leq 2n-1, x_2=2n
	                                  \end{array}
 \right\}
	\end{equation}
and has black vertices given by
\begin{equation}
	\mathtt{B}_{AD}=\left\{ (x_1,x_2)\in \mathtt{B} : 
	\begin{array}{l}
	0 \leq x_1 \leq 2(2m+n) , 1\leq x_2 \leq 2n-1 \\ \mbox{ or } 2(2m+1) \leq x_1 \leq 2(2m+n), x_2=-1 \end{array} \right\}.
\end{equation}
We denote by $A_{n,m}$ to be vertex set of the dual graph of the double Aztec diamond with $(\xi,\eta)$ co-ordinates.  Figure~\ref{Figdualgraph} shows the dual graph of the double Aztec diamond with $n=8$ and $m=4$.

\begin{figure}
\begin{center}
\includegraphics[height=4in]{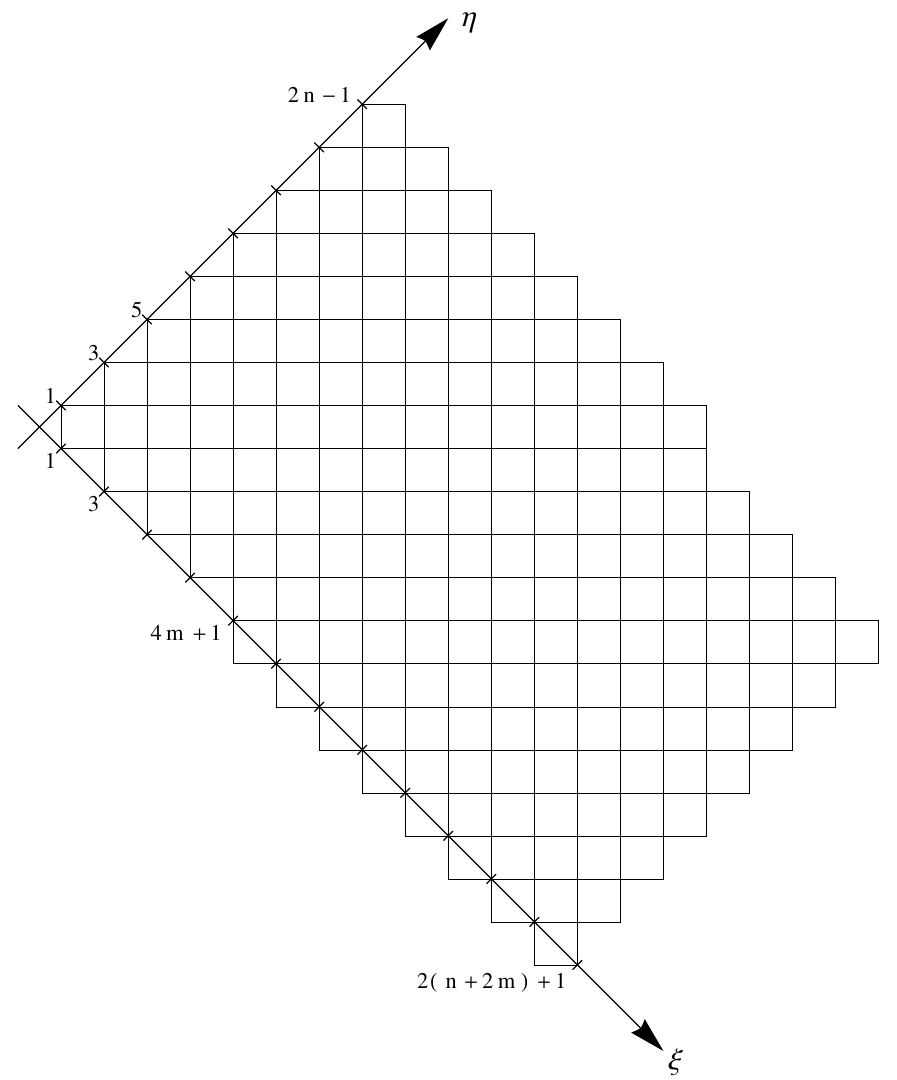}
\includegraphics[height=1.5in]{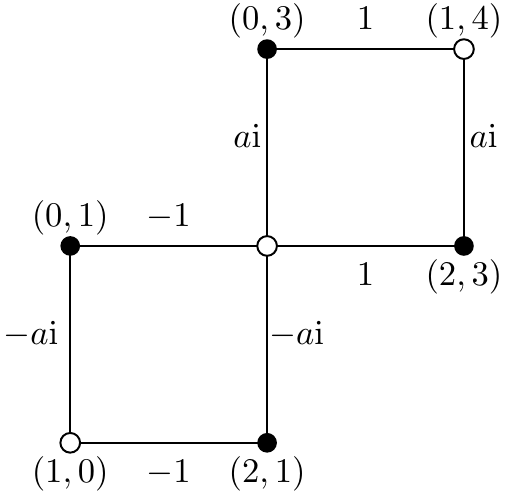}
\caption{The left hand figure shows the dual graph of the double Aztec diamond with $n=8$ and $m=4$ in the $(\xi,\eta)$ co-ordinates.  The right hand figure shows the weights, Kasteleyn orientation, black and white vertices for the two most left squares, with $(\xi,\eta)$-coordinates.}
\label{Figdualgraph}
\end{center}
\end{figure}

Let $K_a$ denote the Kasteleyn matrix for the double Aztec diamond with entries 
\begin{equation} \label{inverseK:defn_of_K}
      K_a (\mathtt{b},\mathtt{w}) = \left\{ \begin{array}{ll}
                                          (-1)^{-(b_1+b_2+1)/2} \alpha(r) & \mbox{if } \mathtt{w}=\mathtt{b}+e_r \\
                                          (-1)^{-(b_1+b_2-1)/2} \alpha(r) & \mbox{if } \mathtt{w}=\mathtt{b}-e_r \\
                                          0 & \mbox{otherwise}
                                         \end{array} \right.
\end{equation}
with $e_1=(1,1)$, $e_2=(-1,1)$, $\mathtt{b}=(b_1,b_2) \in \mathtt{B}$, $r=1,2$, $\alpha(1)=1$ and $\alpha(2)=-a \mathrm{i}$.  The choice of sign for the entries of the matrix is the same as \cite{CJY:12}.  These are chosen so that entries of the inverse Kasteleyn matrix are discrete analytic functions when $a=1$.

\begin{theorem} \label{thm:inverseK}
      The entries of inverse Kasteleyn matrix for the double Aztec diamond, $K_a$, defined by~\eqref{inverseK:defn_of_K} are given by 
\begin{equation}\label{InvKastFormula}
\begin{split}
      &K_a^{-1} (\mathtt{w},\mathtt{b})\\
& =-(-1)^{(w_1-w_2+b_1-b_2+2)/4}\mathbb{K}_{n,\rho} \left(b_2+1,\frac{b_2-b_1+2m+1}{2};w_2+1,\frac{w_2-w_1+2m+1}{2} \right),
\end{split}
\end{equation}
where as before $n-\rho=2m$.
\end{theorem}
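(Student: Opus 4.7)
The plan is to use Kenyon's formula (Theorem~\ref{Kenyonsthm}) as the bridge between the determinantal structure of the $\mathbb{K}$-particle process (Theorem~\ref{main2'}) and the inverse Kasteleyn matrix $K_a^{-1}$. First I would make explicit the correspondence between dominoes and dimers: each black vertex $\mathtt{b}$ is incident to four edges of the dual graph, corresponding to North, South, East, West dominoes; a $\mathbb{K}$-particle at $\mathtt{b}$ is placed precisely when the covering domino is South or West. Consequently the $\mathbb{K}$-process is obtained from the dimer process by projecting onto the south and west edges at each black site.

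Next, by Theorem~\ref{Kenyonsthm} the probability that a given collection of South/West edges $e_i=(\mathtt{b}_i,\mathtt{w}_i)$ appears in the tiling equals $\det\bigl[K_a(\mathtt{b}_i,\mathtt{w}_i)\,K_a^{-1}(\mathtt{w}_i,\mathtt{b}_j)\bigr]_{i,j}$, while on the other hand this probability is the corresponding minor of the $\mathbb{K}$-kernel from Theorem~\ref{main2'}. Matching the two determinants at various sizes — already the $1\times 1$ case pins down the diagonal entry $K_a^{-1}(\mathtt{w},\mathtt{b})$ when $\mathtt{w}$ is the south or west neighbor of $\mathtt{b}$, and the $2\times 2$ case then yields the off-diagonal entries $K_a^{-1}(\mathtt{w},\mathtt{b}')$ for $\mathtt{b}'$ farther from $\mathtt{w}$ — lets one read off $K_a^{-1}$ at all pairs by varying the chosen edges. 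The change of coordinates (\ref{K}) identifies $\mathtt{b}=(b_1,b_2),\mathtt{w}=(w_1,w_2)$ with the $\mathbb{K}$-kernel arguments $(2r,x),(2s,y)$ so that the factor $K_a(\mathtt{b},\mathtt{w})\in\{\pm 1,\pm a\mathrm{i}\}$ cancels cleanly against the $(-1)^{x-y}$ in (\ref{K1}) and the Kasteleyn orientation signs (\ref{inverseK:defn_of_K}), yielding precisely the prefactor $-(-1)^{(w_1-w_2+b_1-b_2+2)/4}$ of formula (\ref{InvKastFormula}).

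To promote the formula from the pairs $(\mathtt{w},\mathtt{b})$ arising as dimer endpoints to arbitrary pairs in $\mathtt{W}_{AD}\times \mathtt{B}_{AD}$, I would verify that the proposed right-hand side of (\ref{InvKastFormula}) satisfies the identity $\sum_{\mathtt{w}} K_a(\mathtt{b}',\mathtt{w})\,K_a^{-1}(\mathtt{w},\mathtt{b})=\delta_{\mathtt{b},\mathtt{b}'}$ directly, using the contour integral representations in (\ref{K13})--(\ref{K12}). Because $K_a$ is a local (four-term) operator on the dual graph, this amounts to showing that applying the appropriate discrete difference in $\mathtt{b}'$ to $\mathbb{K}_{n,\rho}$ collapses each integral to a residue that sums to $0$ off the diagonal and to $1$ on it. The contributions of $\psi_{2r,2s}$, of $S$, and of the resolvent inner product in (\ref{K1}) can be handled one at a time: $\psi_{2r,2s}$ contributes a telescoping piece on the overlap rows, $S$ gives the main delta-function via standard residue calculus around $\Gamma_{0,a}$, and the resolvent term vanishes under the discrete difference because the functions $a_{-y,2s}(k)$ already lie in the kernel of the relevant operator.

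The main obstacle is bookkeeping rather than structural: tracking simultaneously (i) the Kasteleyn orientation signs $(-1)^{-(b_1+b_2\pm 1)/2}$ and weights $\alpha(r)\in\{1,-a\mathrm{i}\}$, (ii) the sign $(-1)^{x-y}$ on the left-hand side of (\ref{K1}), (iii) the coordinate shift between $(\xi,\eta)$ and $(z,x)$ from (\ref{K}), and (iv) the boundary rows at $\eta=-1$ and $\eta=2n$ where the geometry of $\mathtt{B}_{AD},\mathtt{W}_{AD}$ differs from the bulk rule. The algebraic check that all these $\pm 1$, $\pm\mathrm{i}$ and factors of $a$ combine into the single exponent $(w_1-w_2+b_1-b_2+2)/4$ of (\ref{InvKastFormula}) must be performed case by case according to the relative position of $\mathtt{w}$ and $\mathtt{b}$, particularly across the two ``tips'' of the double Aztec diamond where the new double-diamond structure (and the appearance of the projection $_{\geq n-\rho+1}$) differs from the single-diamond inverse Kasteleyn of \cite{CJY:12}.
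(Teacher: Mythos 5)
Your overall strategy coincides with the paper's: Kenyon's theorem is used only to \emph{guess} the formula (the paper says exactly this), and the actual proof is the direct verification $\sum_{\mathtt{w}}K_a(\mathtt{b}',\mathtt{w})K_a^{-1}(\mathtt{w},\mathtt{b})=\delta_{\mathtt{b}',\mathtt{b}}$ by contour-integral computations, split into cases according to the position of $\mathtt{b}'$ (interior, left, bottom, top boundary, and the special point $(2n,2n-1)$). Be aware, though, that the first half of your argument cannot be more than a heuristic: a determinantal correlation kernel is only determined up to conjugation by a diagonal matrix, so matching $1\times1$ and $2\times2$ minors of $\mathbb{K}_{n,\rho}$ against $\det[K_a(\mathtt{b}_i,\mathtt{w}_i)K_a^{-1}(\mathtt{w}_i,\mathtt{b}_j)]$ does not ``pin down'' the entries of $K_a^{-1}$; all the probative weight rests on the verification step, as in the paper.

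The genuine gap is in your sketch of that verification. You assert that the resolvent term of (\ref{K1}) ``vanishes under the discrete difference because the functions $a_{-y,2s}(k)$ already lie in the kernel of the relevant operator,'' so that only $\psi$ and $S$ matter. This is false precisely where the double-diamond geometry bites. In the paper's computation (which uses the equivalent Eynard--Mehta form $\tilde f_1+\tilde f_2$ of the inlier kernel, with $\tilde f_2$ the finite-rank correction built from the matrix $A^{-1}$ of (\ref{inverseK:matrixA})), the correction term is indeed annihilated by the local Kasteleyn difference in the interior, on the left boundary and on the bottom boundary --- but on the top boundary $T_af_1(\mathtt{b},\mathtt{y})$ is a \emph{nonzero} contour integral (\ref{inverseK:TBf1part}) for $y_2<b_2$, and it is exactly $T_af_2$, evaluated via the biorthogonality identity $\sum_j(A^{-1})_{i,j}\tilde\psi_{0,2n+1}(j-m-1,k)=\delta_{i,k+m+1}$, that cancels it; at the special point $(2n,2n-1)$ the correction term is even responsible for producing the diagonal $1$. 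If you discard the correction term after applying the discrete difference, the identity $K_aK_a^{-1}=\mathbbm{I}$ fails on the rows $\eta=2n-1$ with $b_1\geq 2n$. So your case (iv) ``bookkeeping'' is not mere bookkeeping: you need an actual computation showing how the finite-rank part of the kernel interacts with the boundary rows, and working with the resolvent form $(\Id-K_n)^{-1}_{\geq n-\rho+1}$ directly (rather than passing to the Eynard--Mehta form as the paper does) makes that cancellation considerably harder to exhibit.
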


In other words, using~\eqref{K}, $(x,z) \leftrightarrow (\xi,\eta)$, 
\begin{equation*}
K_a^{-1} (\mathtt{w},\mathtt{b})= -(-1)^{(w_1-w_2+b_1-b_2+2)/4}\mathbb{K}_{n,\rho}\left( z(\mathtt{b}),x(\mathtt{b});z(\mathtt{w}),x(\mathtt{b}) \right)
\end{equation*}
so essentially the two kernels are the transpose of each other modulo the co-ordinate transformation~\eqref{K}.
 
 The proof of this theorem involves the characterization of $K_a^{-1}$:  $K_a.K_a^{-1}=\mathbbm{I}$ and is given in Section~\ref{Sec:inverseK}.  In order to find such a formula for $K_a^{-1}$ we used a guess following the approach in \cite{CJY:12}. More explicitly, using the $\mathbb{K}$ particle correlation kernel one can compute the joint probabilities of 
$\BK$-particles.  As these particles correspond to east and south dominos, this joint probability should be equal to a corresponding formula written in terms of the inverse Kasteleyn matrix by using Theorem~\ref{Kenyonsthm}.  These two sides can be compared which gives a  guess for the inverse Kasteleyn matrix in terms of the $\BK$ particle correlation kernel and leads to the formula in the theorem.
 
Since we now have the inverse Kasteleyn matrix we can use Theorem \ref{Kenyonsthm} to prove theorem \ref{main1'}. The basic observation is that we have an $\BL$-particle
at a black vertex $b$ if and only if a dimer covers the edge $(b,b+e_1)$ or the edge $(b,b-e_2)$. By using theorem \ref{thm:inverseK} we can compute the probability of
seeing $\BL$-particles at given black vertices $b_1,\dots,b_{\ell}$ by summing over all the possibilities for the dimers and using theorem \ref{Kenyonsthm} and thus deduce the $\mathbb{L}$ kernel, see
section \ref{Sec:Lkernel} for the details.  

For general weights and boundary conditions of the square grid, if we define a point process on the  black vertices such that a particle is present at a black vertex iff a dimer is incident to that black vertex, from Theorem \ref{Kenyonsthm}, we can recover the particle correlation kernel  provided we know the inverse Kasteleyn matrix of the model. In general,  the reverse, i.e. to go from the particle correlation kernel to the inverse Kasteleyn matrix, is quite complicated. 
However, for the double Aztec diamond, we were able to express the inverse Kasteleyn matrix in terms of the $\mathbb{K}$-kernel. There should also be an analogous formula for the inverse Kasteleyn matrix in terms of the $\mathbb{L}$-kernel.  This formula could then be used to give the $\mathbb{K}$-kernel in terms of the $\mathbb{L}$-kernel.  

\section{The tacnode GUE-Minor kernel and its symmetry} \label{Sec:GUEminorproperties}

 Recall from (\ref{2min0}) the tacnode GUE-Minor kernel, about which we show the following.

 \begin{proposition}
The kernel $\BK^{\mbox{\tiny tac}} _{\beta,\rho} (u_1,y_1 ; u_2,y_2)$ is invariant under the involution
 \be
u_1 \leftrightarrow \rho-u_2 \qquad , \qquad y_1 \leftrightarrow -y_2,
\label{inv}\ee
and is a finite rank perturbation of the GUE-minor kernel, as follows:
\be
\begin{aligned}
&\BK^{\mbox{\tiny tac}}_{\beta, \rho}
 (u_1,y_1;u_2,y_2)
 \\
 &\stackrel{(*)}{=}  \BK^ {\mbox{\tiny minor}}  (u_1,\beta-y_1 ;~u_2 ,\beta-y_2 ) \\
&~~~+2\sum_{\lambda=0}^{\max(\rho-1,\rho-1-u_2)}  \bigl((\Id - {\cal K}^\beta ( \lambda-\rho ,\kappa-\rho ))^{-1}  {\cal A}^{\beta,y_1-\beta}_{u_1 }\bigr)(\lambda-\rho)  {\cal B}^{\beta,y_2-\beta}_{u_2 }(\lambda-\rho) 
  _{_{ }}
 \\
 &\stackrel{(**)}{=}  \BK^ {\mbox{\tiny minor}}  (\rho-u_2,\beta+y_2 ;~\rho-u_1 ,\beta+y_1 ) \\
&~~~+2\!\!\sum_{\lambda=0}^{\max(\rho-1,u_1-1)}  \bigl((\Id - {\cal K}^\beta ( \lambda-\rho ,\kappa-\rho ))^{-1}  {\cal A}^{\beta,-y_2-\beta}_{\rho-u_2 }\bigr)(\lambda-\rho)  
  {\cal B}^{\beta,-y_1-\beta}_{\rho-u_1 }(\lambda-\rho) 
  _{_{ }}
   \label{2min}\end{aligned}\ee

\end{proposition}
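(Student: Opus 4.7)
The approach is to establish the two finite-rank representations $(*)$ and $(**)$ of the kernel (\ref{2min0}); the involution symmetry (\ref{inv}) then follows by direct comparison. The first representation $(*)$ is obtained by truncation. Shift the summation index in the defining inner product by $\lambda\mapsto\lambda-\rho$, so the sum becomes $\sum_{\lambda\geq 0}$. I claim that ${\cal B}^{\beta,y_2-\beta}_{u_2}(\lambda-\rho)=0$ whenever $\lambda\geq\max(\rho,\rho-u_2)$. Indeed, the single-contour term in (\ref{defAB}) has integrand $\omega^{u_2+\lambda-\rho}$ times a function entire at $\omega=0$, and hence vanishes by Cauchy's theorem when $u_2+\lambda-\rho\geq 0$. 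For the double-contour piece the $\zeta$-factor is $\zeta^{\lambda-\rho}$, analytic at $\zeta=0$ when $\lambda\geq\rho$; since $\omega\in L$ lies outside $\Gamma_0$, the competing singularity $\zeta=\omega$ is not enclosed by the $\zeta$-contour, and the inner integral vanishes. Combining the two conditions produces the upper limit $\max(\rho-1,\rho-1-u_2)$ in $(*)$.

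For the second representation $(**)$, I would apply the reflection $\zeta\mapsto-\zeta$, $\omega\mapsto-\omega$ to every contour integral in (\ref{2min0}), together with the dual index reversal $\lambda\mapsto\rho-1-\lambda$ and $\kappa\mapsto\rho-1-\kappa$ inside the inner product. The quadratic exponents $e^{-2\zeta^2}$ and $e^{-\omega^2}$ are invariant under reflection, while the linear exponents in $\beta$ and $y$ pick up sign flips that are compensated exactly by the simultaneous substitutions $u\mapsto\rho-u$ and $y\mapsto-y$ dictated by the involution. Careful bookkeeping of the sign factors shows that ${\cal A}^{\beta,y_1-\beta}_{u_1}$ is exchanged with ${\cal B}^{\beta,-y_1-\beta}_{\rho-u_1}$, that ${\cal K}^\beta(\lambda-\rho,\kappa-\rho)$ is replaced by its swap ${\cal K}^\beta(\kappa-\rho,\lambda-\rho)$ so that the resolvent is pushed to the dual side of the pairing, and that $\BK^{\mbox{\tiny minor}}(u_1,\beta-y_1;u_2,\beta-y_2)$ becomes $\BK^{\mbox{\tiny minor}}(\rho-u_2,\beta+y_2;\rho-u_1,\beta+y_1)$. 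Rewriting the finite sum from $(*)$ in the reflected variables then yields $(**)$.

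The involution (\ref{inv}) is then immediate by comparison: the right-hand side of $(**)$ is exactly the right-hand side of $(*)$ with $(u_1,y_1)$ and $(u_2,y_2)$ interchanged according to (\ref{inv}); the GUE-minor terms map into each other, the upper limits $\max(\rho-1,\rho-1-u_2)$ and $\max(\rho-1,u_1-1)$ match under $u_2\leftrightarrow\rho-u_1$, and the functions ${\cal A}^{\beta,y-\beta}_u$ and ${\cal B}^{\beta,-y-\beta}_{\rho-u}$ are exchanged as above. Since both $(*)$ and $(**)$ equal the original kernel $\BK^{\mbox{\tiny tac}}_{\beta,\rho}$, this kernel is invariant. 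The main technical obstacle lies in the reflection step: the substitution $\omega\mapsto-\omega$ sends $L=0^++i\BR\!\uparrow$ to a line on the \emph{left} of $\Gamma_0$, violating the nesting condition that $L$ lie to the right of $\Gamma_0$. Deforming the contour back across the origin, together with a matching $\zeta$-deformation, picks up residues at $\omega=0$ and at $\omega=\zeta$; one must verify that these residues reassemble precisely into the extra single-contour pieces visible in (\ref{defAB}), so that the algebraic form of ${\cal A}$, ${\cal B}$ and ${\cal K}^\beta$ is preserved under the combined reflection and index reversal.
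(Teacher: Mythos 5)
Your truncation argument for $(*)$ is correct and is essentially the paper's own: the double--contour part of ${\cal B}^{\beta,y_2-\beta}_{u_2}(\lambda-\rho)$ has no pole at $\zeta=0$ once $\lambda\geq\rho$ (the pole at $\zeta=\omega$ lies outside $\Gamma_0$ since $L$ is to its right), the single $\Gamma_0$--integral vanishes once $u_2+\lambda-\rho\geq 0$, and the intersection of the two conditions yields the upper limit $\max(\rho-1,\rho-1-u_2)$. That part needs no change.

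The involution, however, is where the real content lies, and your reflection argument has a genuine gap. Beyond the fact that the decisive step --- the residue bookkeeping when the reflected line $-L$ is deformed back to the right of $\Gamma_0$, which is supposed to convert the $L$--integral in ${\cal A}$ into the $\Gamma_0$--integral in ${\cal B}$ and vice versa --- is only asserted and never carried out, the substitution $\zeta\mapsto-\zeta$, $\omega\mapsto-\omega$ does not behave as you claim on the kernel ${\cal K}^\beta$ itself. Its exponent $e^{-2\zeta^2+4\beta\zeta}$ contains $\beta$ but neither $u$ nor $y$, so the sign flip $4\beta\zeta\mapsto-4\beta\zeta$ is \emph{not} compensated by the substitutions $u\mapsto\rho-u$, $y\mapsto-y$; the reflection produces ${\cal K}^{-\beta}$ (up to signs $(-1)^{\kappa-\lambda}$), not the transpose ${\cal K}^\beta(\kappa,\lambda)$ that your argument needs in order to push the resolvent to the other side of the pairing. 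The paper avoids both difficulties by a purely algebraic route: it writes $G(\lambda)$, $H(\kappa)$, $g_y(k)$, $h_y(\lambda)$ as single contour integrals, observes that ${\cal K}_\rho=\mathcal{G}\mathcal{H}$ with $\mathcal{G}^{\top}=\mathcal{G}$, $\mathcal{H}^{\top}=\mathcal{H}$, that ${\cal A}=g_{-y_1}-\mathcal{G}h_{y_1}$ and ${\cal B}=h_{-y_2}-\mathcal{H}g_{y_2}$, expands the whole kernel into five inner products, and checks self--duality term by term using the identity $\Id+\mathcal{H}(\Id+\mathcal{R})\mathcal{G}=\Id+\mathcal{R}^{\top}$ and the self--adjointness of $\mathcal{H}(\Id+\mathcal{R})$ and $(\Id+\mathcal{R})\mathcal{G}$; the equality $(**)$ is then an immediate corollary of $(*)$ and the involution. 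To repair your proof you would either need to carry out the contour deformations explicitly and exhibit the extra conjugations that restore ${\cal K}^\beta$, or switch to the operator decomposition.
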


\remark This symmetry (\ref{inv}) is not surprising, since it corresponds to the symmetry of the geometry of the double Aztec diamond.

\


\proof In order to prove this statement  we need the following functions,$$\begin{aligned}
G(\lb) &=  \int_{L} \frac{d\om}{2\pi i} e^{2\om^2-4\beta\om} \om^{-\lb-2}, 
~~~~~g_{y } (k) = \int_{L} \frac{d\om}{2\pi i} e^{\om^2-2(\beta-y )\om} \om^{-k-1}
\\
H(\kappa) &=  \int_{\Gamma_0} \frac{d\zeta}{2\pi i \zeta^{-\kappa}} e^{-2\zeta^2+4\beta\zeta}, \qquad    h_{y } {(\lb)}=  \int_{\Gamma_0} \frac{d\zeta}{2\pi i \zeta^{-\lb}} e^{- \zeta^2+2(\beta-y )\zeta},
\end{aligned}$$
and the corresponding operators
$$\begin{aligned}
\mathcal{G} (\kappa,\al) f(\al) &:= \sum_{\al \geq 0} G (\kappa-\rho+\al) f(\al)
\\
\mathcal{H} (\lb,\al) f(\al) & :=  \sum_{\al\geq 0} H (\lb-\rho+\al) f(\al)
.\end{aligned}$$
The kernel ${\mathcal K}^{\beta}$ and its resolvent, and the functions ${\mathcal A}^{\beta,y}_{v}(\kappa)$ and ${\mathcal B}^{\beta,y}_{u}(\lb)$ as in (\ref{defAB}) can then be expressed as follows, 
\be
\begin{aligned}
&    {\cal K}^{\beta} (\lb,\kappa) = \sum_{\al \geq 0} 
 G(\lb+\al) \ H(\al+\kappa),~~~~{\cal K}_\rho (\lb,\kappa) := {\cal K}^{\beta} (\lb-\rho,\kappa-\rho),
\\
&  \mathcal{K}_\rho = \mathcal{G} \mathcal{H} \qquad  , \qquad 
{\mathcal K}^\top_\rho = \mathcal{H}\mathcal{G}, \qquad   \mbox{with} ~~\mathcal{G}^\top = \mathcal{G} \ , \ \mathcal{H}^\top = \mathcal{H},
\\ &  \Id + \mathcal{R} (\lb,\kappa) := (\Id - \mathcal{K}_\rho (\lb,\kappa))^{-1} = \sum_{\al \geq 0} \mathcal{K}^\al_\rho
\\
&    \mathcal{A}_{u_1}^{\beta,y_1-\beta} (\kappa-\rho) = g_{-y_1} (\kappa-\rho+u_1) - \sum_{\al \geq 0} G(\kappa-\rho+\alpha) h_{y_1} (\alpha-u_1)
\\
&   \hspace{28mm}     = g_{-y_1} (\kappa-\rho+u_1)  - \mathcal{G} (\kappa,\cdot) h_{y_1} (\cdot -u_1)
\\
&  \mathcal{B}^{\beta,y_2-\beta}_{u_2} (\lb-\rho) = h_{-y_2} ( \lb -\rho +u_2) -\sum_{\al \geq 0}  H(\lambda-\rho+\alpha) g_{y_2} (\alpha-u_2)
\\
&   \hspace{28mm}   =  h_{-y_2} (\lb -\rho +u_2) - \mathcal{H} (\lb,\cdot) g_{y_2} (\cdot-u_2)
\end{aligned}\label{defog}\ee
Using the definition (\ref{2min0}) of the kernel and the expressions (\ref{defog}), we have the following identities:
\be\begin{aligned}
&                 \frac{1}{2} \  \BK^{{\mbox{\tiny tac}}}_{\beta,\rho} (u_1,\beta-y_1 ; u_2, \beta-y_2) 
\\
& 
                  = -\Id_{u_1>u_2} 2^{u_1-u_2-1} \BH^{u_1-u_2} (y_2-y_1) + \sum_{\al \geq 0} g_{y_2}(\al-u_2) h_{y_1}(\al-u_1)
\\
& 
                  + \Bigl    \la  (\Id + {\cal R}(\lb,\kappa))g_{-y_1} (\kappa-\rho+u_1) \ , \ h_{-y_2} {(\lb - \rho + u_2)}   \Bigr     \ra_{\geq 0}
\\
& 
                  +   \Bigl   \la \mathcal{H} (\lb,\al)g_{y_2} (\al-u_2) \ , \ (\Id+{\cal R}(\lb,\kappa)) \mathcal{G} (\kappa,\al) h_{y_1} (\al-u_1)    \Bigr   \ra_{\geq 0}
\\
& 
                 -  \Bigl  \la (\Id + {\cal R}(\lb,\kappa)) g_{-y_1} (\kappa-\rho+u_1) \ , \ \mathcal{H} (\lb,\al) g_{y_2} (\al-u_2)   \Bigr     \ra_{\geq 0)}
\\
& 
                 -   \Bigl    \la (\Id + {\cal R}(\lb,\kappa)) \mathcal{G} (\kappa,\al) h_{y_1} (\al-u_1) \ , \ h_{-y_2} (\lb-\rho+u_2)   \Bigr \ra_{\geq 0}
\\
& 
                 = - \Id_{u_1 > u_2} 2^{u_1-u_2-1} \BH^{u_1-u_2} (y_2-y_1)
\\
& 
                + \Bigl   \la g_{-y_1} (\kappa-\rho+u_1) \ , \ (\Id + {\cal R}^{\top} (\lb,\kappa)) h_{-y_2} (\kappa-\rho+u_2)     \Bigr \ra_{\geq 0}
\\
&                 + \Bigl   \la g_{y_2} (\al-u_2) \ , \ (\Id +  \mathcal{H}^T (\Id+{\cal R})\mathcal{G}) h_{y_1} (\al -u_1)     \Bigr \ra_{\geq 0}
\\
&                - \Bigl   \la  \mathcal{H} (\Id+{\cal R}) g_{-y_1} (\kappa-\rho+u_1) \ , \   g_{ y_2}  (\al-u_2)   \Bigr\ra_{\geq 0}
\\
&               - \Bigl   \la (\Id +{\cal R}) \mathcal{G} h_{y_1} ((\al-u_1) \ , \  h_{-y_2}  (\lb - \rho + u_2)    \Bigr \ra_{\geq 0}.
\end{aligned}\label{symm}\ee
Given  the involution (\ref{inv}), 
all terms in the last expression (\ref{symm}) are self-dual, except that the second and third terms interchange, because of the operator identity (see (\ref{defog}))
\[
 \Id+\mathcal{H}^{\top}  (\Id +{\cal  R}) \mathcal{G}=\Id+\mathcal{H}^{ }  (\Id +{\cal  R}) \mathcal{G}=\Id+{\cal  R}^{\top} 
\]
and the self-adjointness of $\mathcal{H}(\Id+\mathcal{R})$ and $(\Id +{\cal  R})\mathcal{G}$.

To prove the second statement (\ref{2min}) on finite perturbation, one notices from~\eqref{defAB} that the double integral in $ {\cal B}^{\beta,y }_{u }(\lambda)$ equals $0$ for $\lambda\geq 0$, since the integrand as a function of $\zeta $ has no pole at $0$ and, similarly the single integral equals $0$ for $u+\lambda\geq 0$. Thus  $ {\cal B}^{\beta,y_2-\beta}_{u_2 }(\lambda-\rho) =0$ for $\lambda\geq \rho$ and for $\lambda\geq \rho-u_2$. This proves from~\eqref{2min0}, the first equality $\stackrel{(*)}{=} $. The second equality $\stackrel{(**)}{=} $ is obtained by the involution (\ref{inv}). \qed


\section{Interlacing pattern of the $\BL$-process} \label{Sec:Interlacing}

In this section we prove the interlacing properties as explained in Proposition \ref{Interla}. We first need the following Lemma (remember $\rho=n-2m$).

 \begin{lemma}\label{dots1}
 The total number of dots along the line $\xi=2i$ equals the difference of height $\Dt h$ between the extreme points of that line, as is given by the boundary values of the height function: \footnote{a dot-particle $x$ is to the right of a dot-particle $y$ means $\eta(x)\geq \eta(y)$. }
 $$\begin{array}{llllllllll}
 
 \mbox{lines $\xi=2i$}&\vline   &\mbox{$h_{\mbox{\tiny left}}$}  &\vline&\mbox{$h_{\mbox{\tiny right}}$} &\vline &\Dt h  & \vline& \mbox{\# of blue and red dots} \\
 \hline
 0\leq  i\leq n-\rho &\vline  &n  & \vline&  i & \vline&n-i&\vline& \mbox{$n-i$ blue dots  }\\
 n-\rho <  i <  n &\vline&\rho+i&\vline& i&\vline&   \rho &\vline& \left\{\begin{aligned} &\mbox{$\rho+i-n $ red dots}\\ 
 &\mbox{to the left of}\\&\mbox{$\ n-i$ blue dots}   \end{aligned}\right.  \\
  n \leq  i\leq  2n-\rho &\vline&\rho+i&\vline&n&\vline&   \rho\!+\!i\!-\!n &\vline& \mbox{$\rho+i-n$ red dots  }\\
 \end{array}
 $$
 
 \end{lemma}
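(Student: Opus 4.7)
The plan is to exploit the definitional correspondence between the $\mathbb{L}$-particles and the level curves of the height function. By construction (Figure~\ref{FigLevelline}), a dot appears on a black square of the line $\xi=2i$ precisely when that square is traversed by a level curve, and each such crossing records a drop of $h$ by one unit as the line is traversed in the fixed orientation (from the endpoint labelled ``left'' to the one labelled ``right'' in the table). Since the local rules of Figure~\ref{FigLevelline} forbid a dot pattern that would correspond to $h$ going back up as one moves along a line of fixed $\xi$, the height is monotonically non-increasing along $\xi=2i$, and the total number of dots on that line is therefore exactly $\Delta h=h_{\text{left}}-h_{\text{right}}$.

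The next step is to identify, for each of the three regimes of $i$, the two boundary points where the line $\xi=2i$ meets the boundary of $A\cup B$, and to read off the prescribed boundary heights from Figure~\ref{FigLevelline}. For $0\leq i\leq n-\rho$ the line lies entirely in $A\setminus B$; for $n-\rho<i<n$ it crosses the overlap $A\cap B$; and for $n\leq i\leq 2n-\rho$ it lies in $B\setminus A$. In each case the two endpoints sit on specific linear segments of the boundary, and a short elementary computation, using only the values $h=0$ on the upper-most edge and $h=2n$ on the lower-most edge together with the constant slope of $h$ along each boundary segment, gives the three entries of the columns $h_{\text{left}}$ and $h_{\text{right}}$, and therefore the three values $n-i$, $\rho$, and $\rho+i-n$ of $\Delta h$.

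The subtler point, and the principal obstacle, is the color splitting in the overlap range $n-\rho<i<n$, where one must further show that the $\rho+i-n$ leftmost dots are red ($B$-level) while the $n-i$ rightmost dots are blue ($A$-level). The argument I would use is that distinct level curves cannot cross each other, so as $h$ descends monotonically from $\rho+i$ down to $i$ along $\xi=2i$, the sequence of half-integer values of $h$ encountered is precisely $\rho+i-\tfrac{1}{2},\rho+i-\tfrac{3}{2},\dots,i+\tfrac{1}{2}$, in decreasing order. The half-integers lying in $(n,2n)$ are by definition the $B$-level values (red dots) and are crossed first (while $h>n$), while those in $(0,n)$ are the $A$-level values (blue dots) and are crossed afterwards. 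Combined with the count from the previous step, this yields the claimed left-to-right ordering of colors and completes the proof of the lemma.
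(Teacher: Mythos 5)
Your proposal is correct and follows essentially the same route as the paper: monotone decrease of the height along each line $\xi=2i$ gives $\#\text{dots}=\Delta h$, the boundary values of $h$ supply the table entries, and the separation of red and blue dots follows because $B$-level curves have heights $\geq n+\tfrac12$ while $A$-level curves have heights $\leq n-\tfrac12$, so the former are crossed first in the descent. The only cosmetic difference is that the paper handles the third row by a symmetry of the figure rather than a direct boundary computation.
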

 \proof The statement on the first row in the table above follows from the fact that the height $h$ along the lines $\xi=2i$ for $0\leq i\leq n-\rho$ decreases from $h_{\mbox{\tiny left}}=n$ to $h_{\mbox{\tiny right}}=i$ for $0\leq i\leq n-\rho$ (going from left to right) and from the fact that each decrease of height by $1$ produces a dot. The same statement holds for the range on the third line of the table by the obvious symmetry consisting of flipping the figure about the middle of the $\xi$-axis and the middle of the $\eta$-axis. Also note that the heights of the $B$-level curves range over the half-integers from $n+1/2$ to $2n-1/2$. Therefore the lines $\xi=2i$ for $0\leq i\leq n-\rho $, which have height at most $n$, will never intersect those lower-level lines and vice-versa, showing that on the first line (resp. last line) of the table above only blue (red resp.) dots appear.
 
 In the overlap region of the two diamonds, the boundary values of the height function show that $h_{\mbox{\tiny left}}$, $h_{\mbox{\tiny right}}$ and $\Dt h=h_{\mbox{\tiny left}}-h_{\mbox{\tiny right}}$ is as indicated in the table. Moreover, since the height of the $B$-level curves is $\geq n+1/2$ and the height of the $A$-level curves is $\leq n-1/2$, the red dots are all to the left of the blue dots along the lines $\xi=2(n-\rho)$ up to $2n$, with numbers as indicated in the table.

   \begin{lemma}\label{dots2}
 Let the lines $\xi=2k$ and $\xi=2k+2$ for $0\leq k\leq  n-1$ have $\ell-1$ dots starting from the right boundary. Then the $\ell ^{th}$ dot on the line $\xi=2k+2$ must be to the left of or coincide with the $\ell ^{th}$ dot on the line $\xi=2k$.
\end{lemma}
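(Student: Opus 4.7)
The plan is to interpret each $\mathbb{L}$-particle on line $\xi=2k$ as the crossing of a level curve of the height function with that line, and to exploit the fact that these level curves form a system of non-intersecting monotone lattice paths (the DR paths of \cite{LRS:01}).

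First, using Lemma~\ref{dots1}, I would identify each dot with a unique level curve: on line $\xi=2k$, the crossings are precisely the level curves whose half-integer heights lie strictly between $h_{\text{right}}(k)$ and $h_{\text{left}}(k)$, with the rightmost dot corresponding to the curve of smallest such height. Consequently the $\ell$-th dot from the right on line $\xi=2k$ sits on the level curve $\gamma_\ell^{(k)}$ of height $h_{\text{right}}(k)+\ell-\tfrac12$. From the table in Lemma~\ref{dots1}, one checks that $h_{\text{right}}(k+1)-h_{\text{right}}(k)\in\{0,1\}$ across all three regimes (single-$A$, overlap, single-$B$), so $\gamma_\ell^{(k+1)}$ either coincides with $\gamma_\ell^{(k)}$ or has height exactly one unit greater.

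Next I would split the argument into two sub-cases. When $\gamma_\ell^{(k+1)}=\gamma_\ell^{(k)}$ (which happens when $h_{\text{right}}$ is stationary, i.e.\ $k\ge n$), the inequality $\eta_\ell^{(k+1)}\le\eta_\ell^{(k)}$ is a direct consequence of the monotonicity of the single lattice path $\gamma_\ell^{(k)}$ as $\xi$ varies. When $\gamma_\ell^{(k+1)}$ has height one greater than $\gamma_\ell^{(k)}$, I would combine two ingredients: (i) by the non-crossing of level curves, on the common line $\xi=2k$ the higher-height curve $\gamma_\ell^{(k+1)}$ crosses strictly to the ``left'' of $\gamma_\ell^{(k)}$; and (ii) the monotonicity of $\gamma_\ell^{(k+1)}$ as $\xi$ increases from $2k$ to $2k+2$. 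Chaining these two facts produces the desired strict inequality $\eta_\ell^{(k+1)}<\eta_\ell^{(k)}$.

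The main obstacle is pinning down the correct \emph{direction} of monotonicity: since diamonds $A$ and $B$ are oppositely oriented, the $A$- and $B$-level curves are monotone lattice paths of opposite sense along the $\xi$-axis, and one needs to verify that in each sub-case the monotonicity combines correctly with the non-crossing inequality. This can be handled either by appealing directly to the DR lattice path description of \cite{LRS:01}, or by a symmetry argument reducing the $B$-regime to the $A$-regime by the $180^\circ$ rotation of the double Aztec diamond about the center of the overlap. A minor secondary point is that in the overlap the $\ell$-th dot may switch from a blue ($A$-)curve to a red ($B$-)curve as $k$ increments; however, the preceding argument relies only on the total ordering of all level curves by height (which is independent of the $A/B$ type), and so it carries through uniformly.
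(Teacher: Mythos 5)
Your reduction to level curves is sound and matches the paper's starting point: the $\ell$-th dot from the right on $\xi=2k$ does sit on the level curve of height $k+\ell-\tfrac12$ (the paper encodes the same bookkeeping as ``the lower-left vertex of the square containing the $\ell$-th dot has height $k+\ell+1$''), and in the range $0\le k\le n-1$ of the lemma one always has $h_{\mathrm{right}}(k+1)-h_{\mathrm{right}}(k)=1$, so only your second sub-case occurs. The gap is in the chaining. Writing $\eta_c(s)$ for the crossing of the curve of height $c$ with the line $\xi=2s$, the level curves drift toward \emph{larger} $\eta$ as $\xi$ increases: the boundary heights $h_{\mathrm{right}}(i)=\min(i,n)$ and $h_{\mathrm{left}}(i)=\max(n,\rho+i)$ are non-decreasing in $i$, so the set $\{h<c\}\cap\{\xi=2s\}=\{\eta>\eta_c(s)\}$ shrinks as $s$ grows, i.e.\ $\eta_c(s)$ is non-\emph{decreasing} in $s$. (Concretely, the curve of height $\tfrac32$ sits at $\eta=2n-3$ on $\xi=0$ but is the rightmost dot on $\xi=2$, so it may sit at $\eta=2n-1$ there.) Hence your ingredient (ii) yields $\eta_{\gamma}(k+1)\ge\eta_{\gamma}(k)$ for $\gamma=\gamma_\ell^{(k+1)}$, which points the wrong way and cannot be combined with (i) to give $\eta_\ell^{(k+1)}\le\eta_\ell^{(k)}$. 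There is also a boundary case in which (i) is vacuous: for the leftmost dot in the overlap ($\ell=\rho$, $k\ge n-\rho$) the curve $\gamma_\rho^{(k+1)}$ has height $k+\rho+\tfrac12>h_{\mathrm{left}}(k)=\rho+k$ and does not meet the line $\xi=2k$ at all.

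The chaining that does work is: non-crossing on the line $\xi=2k+2$ (the curve of height $c+1$ lies at least one black square to the left of the curve of height $c$ there), together with the quantitative bound that a fixed curve advances by at most one black square per line, $\eta_c(k+1)\le\eta_c(k)+2$ --- equivalently, $h(2k+2,\eta)\le h(2k,\eta)+1$. This last bound is the real content of the lemma, and it does not follow from the curves being monotone non-intersecting lattice paths: in the $(\xi,\eta)$ frame the DR steps are $(2,2)$, $(2,0)$ and $(0,2)$, so a path can a priori take several consecutive $(0,2)$ steps between the lines $\xi=2k$ and $\xi=2k+2$ and monotonicity places no limit on $\eta_c(k+1)-\eta_c(k)$. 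Ruling this out requires examining which domino configurations can simultaneously cover the two squares with equal $\eta$ on the two lines --- precisely the local case analysis of Figures \ref{FigD} and \ref{FigC} in the paper's proof --- and this is the ingredient your proposal omits.
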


\proof Note that the right most point of the double Aztec diamond on the line $\xi=2i$ has height $i$, provided $0\leq i\leq  n$. Therefore, if there are $\ell$ dots on the line $\xi=2k+2$, counting from the right hand boundary, then the left-lower vertex of the square $A'$, containing the $\ell$th dot, has height $k+\ell+1$; see Figure~\ref{FigD}. Consider two cases:

(i) assuming no dot in the corresponding square $A$ on the line $\xi=2k$, then the only way to cover the squares $A$ and $A'$ with domino's such that $A'$ carries a dot and not $A$, is given by the four upper configurations of Figure~\ref{FigC}; putting in the heights forces the height of the left-lower and right-upper vertices of the square $A$ to be $k+\ell$ as indicated in Figure~\ref{FigD}. This shows there must be $\ell$ dots on the line $\xi=2k$ strictly to the right of the square $A$. So, the $\ell ^{th}$ dot on the line $\xi=2k+2$ must strictly be to the left of the $\ell ^{th}$ dot on the line $\xi=2k$, at least if $A$ contains no dot. 

(ii) Assume a dot in $A$ on the line $\xi=2k$; the only way for this to occur is given by the four lower configurations of Figure~\ref{FigC}. From them one deduces that, if the height of the lower-left corner of $A'$ is $k+\ell+1$, then the height of the lower-left corner of $A$ must be $k+\ell$ or $k+\ell+1$. In the former case (i.e., $k+\ell$), the dots in $A$ and $A'$ are the $\ell$th ones from the right, proving the claim; in the latter case (i.e., $k+\ell +1$), the dot in $A$ is the $\ell+1$st one and the dot in $A'$ the $\ell$th one. So, the $\ell$th dot on the line $\xi=2k$ is to the right of the $\ell$th one on the line $\xi=2k+2$.\qed

\begin{figure}
\begin{center}
\includegraphics[height=2.5in]{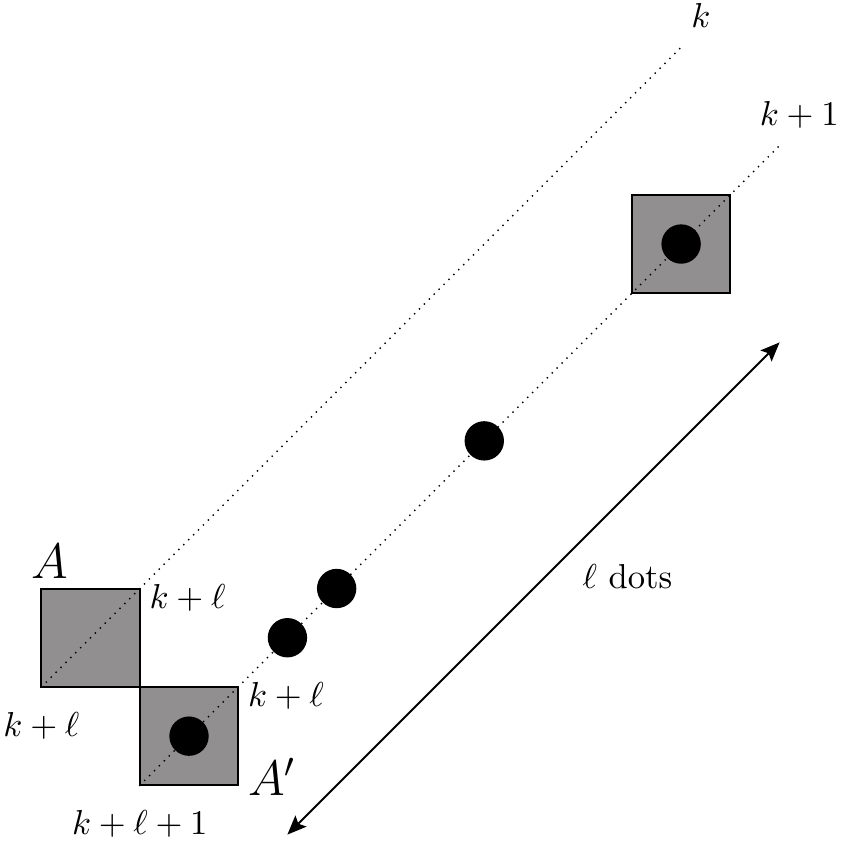}
\caption{Assume the $\ell$th dot on the line $\xi=2k+2$ (counted from the right) appears in $A'$, and assume no dot in the square $A$, then the height of $A$ must be as indicated. }
\label{FigD}
\end{center}
\end{figure}

\begin{figure}
\begin{center}
\includegraphics[height=2in]{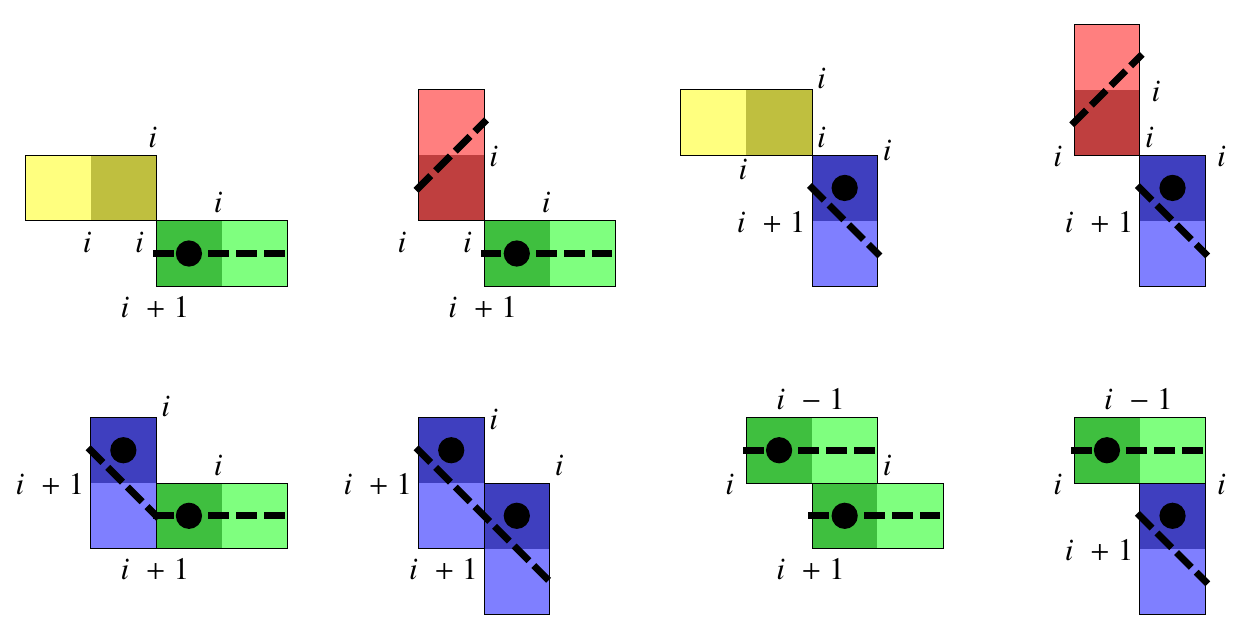}
\caption{The four upper configurations are the only coverings of $A$ and $A'$ of Figure 11, with $A'$ carrying a dot and not $A$. The four lower configurations are the only coverings of $A$ and $A'$, with both $A$ and $A'$ carrying a dot.}
\label{FigC}
\end{center}
\end{figure}

\begin{figure}
\begin{center}
\includegraphics[height=2.5in]{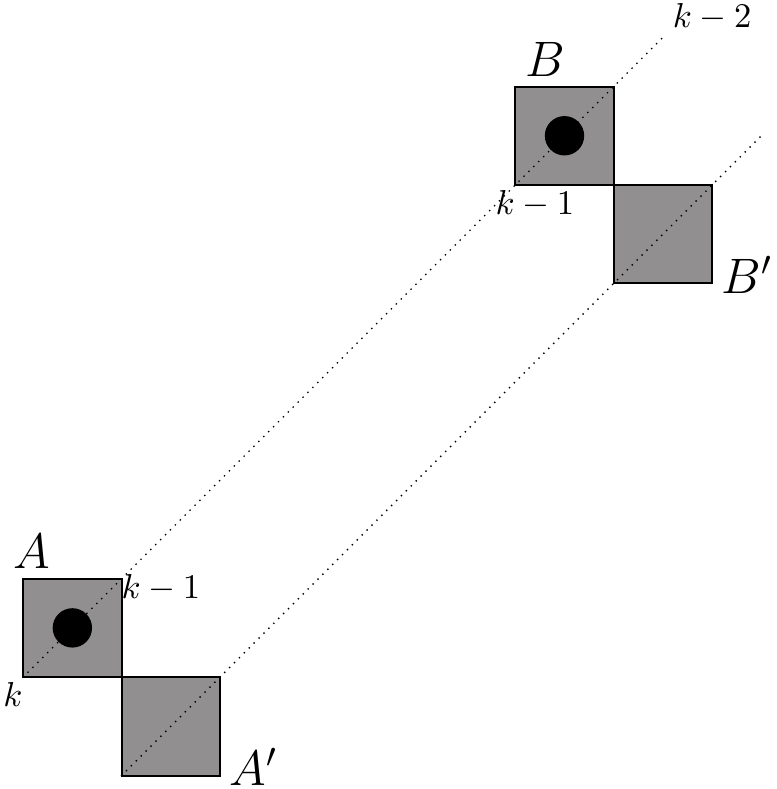}
\caption{Between the two gray squares labeled $A$ and $B$ the height function stays constant; therefore the line between $A$ and $B$ contains no dots.}
\label{FigA}
\end{center}
\end{figure}

\begin{figure}
\begin{center}
\includegraphics[height=1.75in]{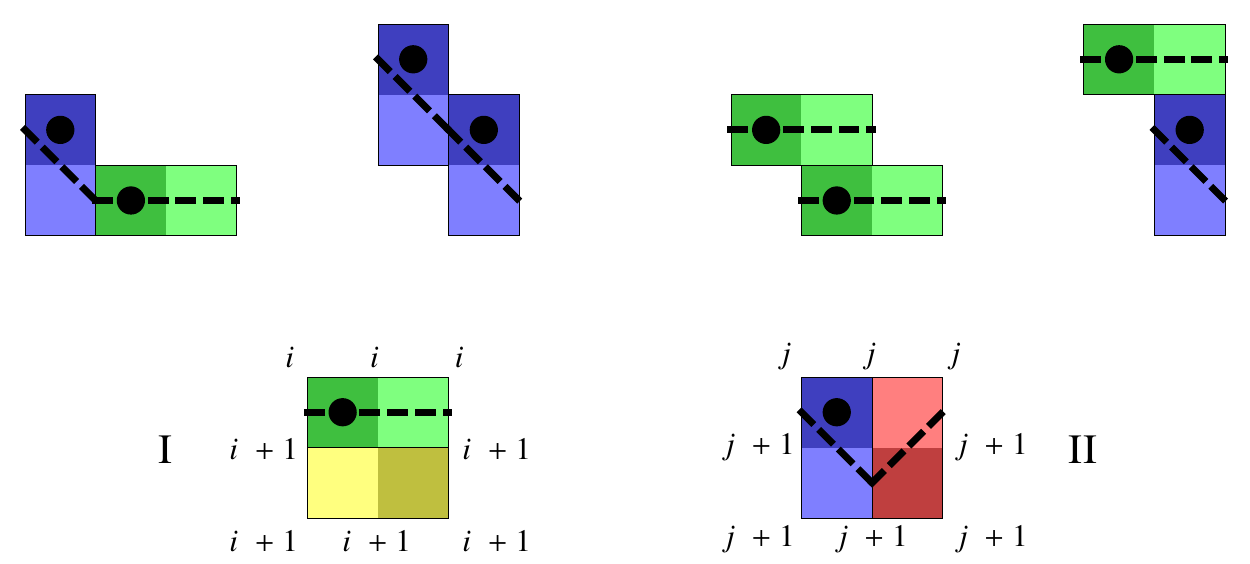}
\caption{Let the squares $A$ and $A'$ (as in Figure \ref{FigA}) each contain a dot, then the four upper figures are the only possible covers of $(A,A')$. If $A$ contains a dot and $A'$ does not, then the two lower figures are the only possible coverings.
} 
\label{FigB}
\end{center}
\end{figure}

  \noindent {\em Proof of Proposition \ref{Interla}}: Consider two consecutive lines $\xi=2\alpha$ and $\xi=2\alpha +2$ through blue dots, with the squares $A$ and $B$,  containing each a dot and no dot in between $A$ and $B$; see Figure~\ref{FigA}. This is to say, the level of the line $\xi=2\alpha$ goes down from $k$ to $k-1$ within square $A$, stays flat in between $A$ and $B$ and then goes down from $k-1$ to $k-2$ within square $B$. We now consider the line $\xi=2\alpha +2$ between the two corresponding squares $A'$ and $B'$, with same $\eta$ coordinates as $A$ and $B$ respectively.

 \noindent We show there must be at least one dot in between the squares $A'$ and $B'$, possibly including $A'$ or $B'$. One checks there are exactly six configurations with a dot in the upper-left square; see Figure~\ref{FigB}. Superimposing any of the four upper configurations on $(A,A')$ or $(B,B')$ will give a dot in $A'$ or $B'$. Assuming no dot, neither at $A'$, nor at $B'$, the configuration $(A,A')$ or $(B,B')$ at Figure~\ref{FigA} can be covered by any combination of configurations ${\rm (I)}$ and ${\rm (II)}$ in Figure~\ref{FigB}. Indeed, 
  $$\begin{array}{cccc}
 (A,A') &&&  (B,B')\\
 ${\rm I}(i=k-1)$ &&&${\rm I}(i=k-2)$ \\
 ${\rm I}(i=k-1)$ &&&${\rm II}(j=k-2)$ \\
 ${\rm II}(j=k-1)$ &&&${\rm I}(i=k-2)$ \\
 ${\rm II}(j=k-1)$ &&&${\rm II}(j=k-2)$ \\
 \end{array}$$
 In all four cases, the difference in height between the lower-left vertex of $A'$, having height $k$, and the upper-right vertex of $B'$, having height $k-1$, will always  $=1$, thus creating a jump in between and thus one dot. So in all cases, there will be at least one dot in one of the squares on the segment $(A',B')$, including possibly on the extremities. 
 
 Finally, this fact together with Lemma \ref{dots1} on the number of blue and red dots and Lemma \ref{dots2} imply the interlacing, with regard to the $\eta$-coordinate.\qed



\section{Scaling limit of the $\BL$ and $\BK$ -processes}

In this section we will prove theorem \ref{a=1} and theorem \ref{Ka=1}. Let $\delta\in\{0,1\}$, where $\delta=0$ will correspond to the
$\BL$-kernel and $\delta=1$ to the $\BK$-kernel. We will ignore the integer parts in the scaling (\ref{scK}) and (\ref{sc1}). This makes no
essential difference but simplifies the notation. Set
$$
f_{t,\delta}(u_1,y_1;~u_2,y_2)=(-a)^{(y_2-y_1)\sqrt{t}}(-\sqrt{t})^{u_2-u_1}(\sqrt{t})^{1-2\delta}(-1)^{\delta}.
$$
Then the prefactor in (\ref{limL}) for the $\BL$-kernel can be written
$$
(-a)^{(\eta_1-\eta_2)/2}(-\sqrt{t})^{(\xi_1-\xi_2)/2}\sqrt{t}=a^{2(y_1-y_2)\sqrt{t}}f_{t,0}(u_1,y_1;u_2,y_2)
$$
and the prefactor for the $\BK$-kernel in (\ref{limK}) is
\begin{align}
&a^{r_2-r_1}(\sqrt{t})^{x_1-x_2+r_2-r_1}(-1)^{x_1-x_2}\\
&=a^{2(y_2-y_1)\sqrt{t}}(-a)^{(y_1-y_2)\sqrt{t}}(-\sqrt{t})^{u_1-(u_2+1)}(-\sqrt{t})\\
&= a^{2(y_2-y_1)\sqrt{t}}f_{t,1}(u_2+1,y_2;~u_1,y_1).
\end{align}
Define
\begin{align}\label{C1}
&C^{(1)}_{2t+\epsilon,\rho,\delta}(u_1,y_1;u_2,y_2)=-f_{t,\delta}(u_1,y_1;~u_2,y_2)(1+a^2)\left((1-\delta) \Id_{u_2<u_1}+\delta \Id_{y_1<y_2} \right)\\
&\times\frac 1{2\pi \I}\oint_{\Gamma_{0,a}}\frac{(1+aw)^{(y_1-y_2)\sqrt{t}-1+\delta}}
{(w-a)^{(y_1-y_2)\sqrt{t}+1-\delta}}w^{u_2-u_1}~dw,
\end{align}
\begin{align}\label{C2}
&C^{(2)}_{2t+\epsilon,\rho,\delta}(u_1,y_1;u_2,y_2)
=f_{t,\delta}(u_1,y_1;u_2,y_2)\\
&\times\frac{(1+a^2)}{(2\pi \I)^2}\oint_{\Gamma_{0,a}}dz\oint_{\Gamma_{0,a,z}}\frac{dw}{w-z}\frac{w^{u_2}(1+az)^{t+y_1\sqrt{t}-1+\delta}(z-a)^{t-y_1\sqrt{t}+\epsilon+\delta}}
{z^{u_1}(1+aw)^{t+y_2\sqrt{t}}(w-a)^{t-y_2\sqrt{t}+1+\epsilon}}
\end{align}
and
\begin{align}\label{C3}
&C^{(3)}_{2t+\epsilon,\rho,\delta}(u_1,y_1;u_2,y_2)\\
&=-f_{t,\delta}(u_1,y_1;u_2,y_2)
(1+a^2)\\
&~~~\times\Big\langle ((I-K_{2t+\epsilon})^{-1}_{\ge 2t+\epsilon-\rho+1}A_{4t+2\epsilon-2u_1,2t+2y_1\sqrt{t}-1,\delta})(j)
\\&\hspace*{6cm}~,~B_{4t+2\epsilon-2u_2,2t+2y_2\sqrt{t}-1}(j)\Big\rangle_{\ge 2t+\epsilon-\rho+1},
\end{align}
where
\begin{align}\label{Adelta}
&A_{4t+2\epsilon-2u_1,2t+2y_1\sqrt{t}-1,\delta}(k)\\
&=-\frac{(-1)^k}{2\pi \I}\oint_{\Gamma_{0,a}}\frac{w^{2t+\epsilon-u_1-k}}{(1+aw)^{t+\epsilon-y_1\sqrt{t}+1-\delta}(w-a)^{t+y_1\sqrt{t}+1-\delta}}~dw\\
&+\frac{(-1)^k}{(2\pi \I)^2}\oint_{\Gamma_{0,a}}dz\oint_{\Gamma_{0,a,z}}\frac{dw}{w-z}\frac{w^{2t+\epsilon-k}(1+az)^{t+y_1\sqrt{t}-1+\delta}(z-a)^{t-y_1\sqrt{t}+\epsilon+\delta}}
{z^{u_1}(1+aw)^{2t+\epsilon}(w-a)^{2t+\epsilon+1}},
\end{align}
which is a slight modification of $A_{\xi_1,\eta_1}(k)$ in (\ref{oneAzt}), and where $B_{\xi_2,\eta_2}(k)$ and $K_{2t+\epsilon}$ are as given in (\ref{oneAzt}).
With these definitions it follows from (\ref{Lkernel1}) that
\begin{align}\label{LC}
&(-a)^{(\eta_1-\eta_2)/2}(-\sqrt{t})^{(\xi_1-\xi_2)/2}\BL_{2t+\epsilon,\rho}(\xi_1,\eta_1;\xi_2,\eta_2)\sqrt{t}\\
&=a^{2(y_1-y_2)\sqrt{t}}(C^{(1)}_{2t+\epsilon,\rho,0}+C^{(2)}_{2t+\epsilon,\rho,0}+C^{(3)}_{2t+\epsilon,\rho,0})(u_1,y_1;u_2,y_2),
\end{align}
if we have the scaling (\ref{scK}). Similarly, it follows from (\ref{K1}) that
\begin{align}\label{KC}
&(1-p_n)a^{r_2-r_1}(\sqrt{t})^{x_1-x_2+r_2-r_1}(-1)^{x_1-x_2}\BK_{2t+\epsilon,\rho}(2r_1,x_1;2r_2,x_2)\\
&=\frac 2{1+a^2}a^{2(y_2-y_1)\sqrt{t}}\left(\left(C^{(1)}_{2t+\epsilon,\rho,1} +C^{(2)}_{2t+\epsilon,\rho,1}+C^{(3)}_{2t+\epsilon,\rho,1}\right)(u_2+1,y_2;u_1,y_1)\right).
\end{align}
We will now use (\ref{LC}) to prove (\ref{limL}). The proof of (\ref{limK}) from (\ref{KC}) is completely analogous since the change from
$\delta=0$ to $\delta=1$ has no effect in the limit. Note that $a^{2(y_1-y_2)\sqrt{t}}\to e^{2\beta(y_2-y_1)}$ as $t\to\infty$ since $a=1-\beta/\sqrt{t}$.
We see that (\ref{limL}) follows from
\begin{equation}\label{limC} 
\lim_{t\to\infty}\sum_{i=1}^3 C^{(i)}_{2t+\epsilon,\rho,0}(u_1,y_1;~u_2,y_2)=e^{2\beta(y_1-y_2)}\BK_{\beta,\rho}(u_1,y_1;~u_2,y_2).
\end{equation}

Let $\mathcal{C}_1$ be the positively oriented unit circle and let $\mathcal{C}_2=\mathcal{C}_2'+\mathcal{C}_2''$, where $\mathcal{C}_2'$ consists of two infinite
line segments $t\to\beta+\I t$, $t\in (-\infty,-2]\cup[2,\infty)$, and $\mathcal{C}_2''$ is a smooth curve that goes from $\beta-2\I$ to $\beta+2\I$ to the right of $\mathcal{C}_1$,
see Figure~\ref{Fig:contours}.

\begin{figure}
\begin{center}
\includegraphics[height=3in]{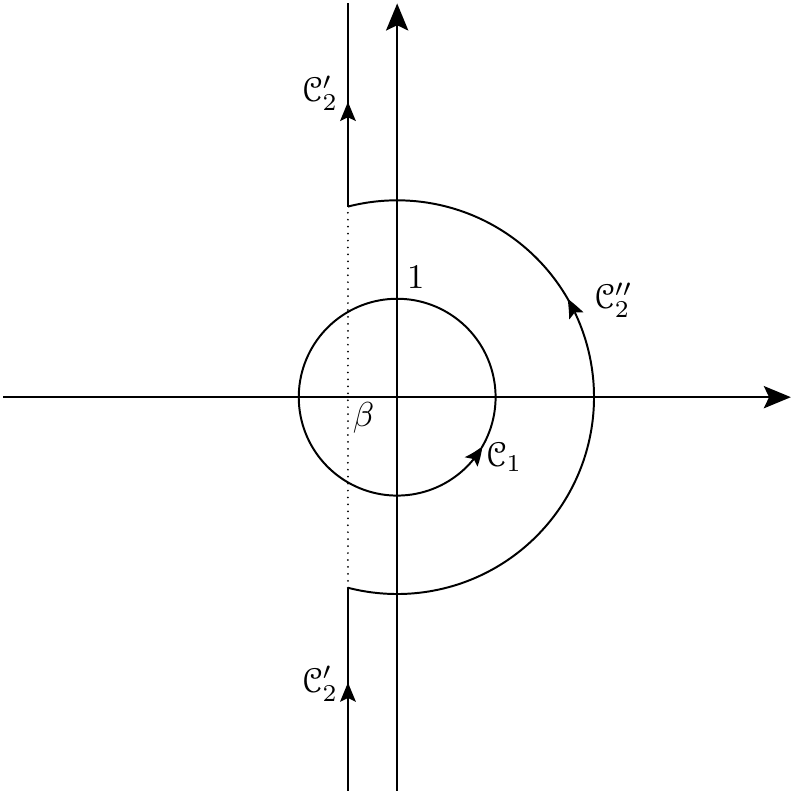}
\caption{The contour paths }
\label{Fig:contours}
\end{center}
\end{figure}

Let $\overline{\mathcal{C}_2}$ be $\mathcal{C}_2$ reflected through the origin. 

Set 
$$
G_{x,t}(\zeta)=\left(\frac{a^{-1}-\zeta/\sqrt{t}}{a+\zeta/\sqrt{t}}\right)^{x\sqrt{t}}
$$
and
$$
F_{x,t}(\zeta)=\left(a^{-1}-\zeta/\sqrt{t}\right)^{t+x\sqrt{t}}\left(a+\zeta/\sqrt{t}\right)^{t-x\sqrt{t}}
$$
so that $F_{x,t}(\zeta)=F_{0,t}(\zeta)G_{x,t}(\zeta)$. Also, we write
$$
g_{x,\beta}(\zeta)=e^{2x(\beta-\zeta)}  ,~~\mbox{and}~f_\beta(\zeta)=e^{2\beta \zeta -\zeta^2}.
$$
The next lemma contains the estimates we need.

\begin{lemma}\label{Est}
Fix $A>0$, $\beta\in\BR$ and $k\ge 1$. There is a $t_0$ and a constant $C$ such that for all $t\ge t_0$, $x\in [-A,A]$,
$s\in\BR$ and $\zeta\in\mathcal{C}_1\cup\mathcal{C}_2''$ we have the following estimates
\begin{equation}\label{Fest}
\frac 1{|F_{x,t}(\beta+\I s)|}\le \frac 1{1+s^{2k}/2^kk!},
\end{equation}
\begin{equation}\label{Gest}
\left|\frac{G_{x,t}(\zeta)}{g_{x,\beta}(\zeta)}-1\right|\le\frac C{\sqrt{t}},
\end{equation}
and
\begin{equation}\label{Fest2}
\left|\frac{F_{0,t}(\zeta)}{f_{\beta}(\zeta)}-1\right|\le\frac C{\sqrt{t}}.
\end{equation}
\end{lemma}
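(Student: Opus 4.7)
The plan is to prove each of the three bounds by a direct computation that exploits the scaling $a = 1 - \beta/\sqrt{t}$, which produces several exact cancellations.

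For estimate \eqref{Fest}, I would first compute $|F_{x,t}(\beta+\I s)|^2$ in closed form. The key identity is $a + (\beta+\I s)/\sqrt{t} = 1 + \I s/\sqrt{t}$, whose squared modulus is \emph{exactly} $1+s^2/t$. The other factor simplifies to $a^{-1} - \beta/\sqrt{t} = 1 + \beta^2/(ta)$, which is real and $\ge 1$ for $t$ large, so $|a^{-1} - (\beta+\I s)/\sqrt{t}|^2 \ge 1 + s^2/t$ as well. Multiplying the two factors, raised to $t+x\sqrt{t}$ and $t-x\sqrt{t}$ respectively, the $x$-dependence disappears and one obtains the clean lower bound $|F_{x,t}(\beta+\I s)|^2 \ge (1+s^2/t)^{2t}$, uniformly in $|x|\le A$. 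Retaining just the $k$-th term in the binomial expansion,
\[
(1+s^2/t)^{2t} \;\ge\; \binom{2t}{k}(s^2/t)^k \;\ge\; \frac{2^{k-1} s^{2k}}{k!}
\]
for $t$ large depending on $k$, so $|F_{x,t}(\beta+\I s)|^2 \ge 1 + 2^{k-1} s^{2k}/k!$, from which \eqref{Fest} follows by a square root with room to spare.

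For \eqref{Gest} and \eqref{Fest2}, since $\zeta$ ranges over the compact set $\mathcal{C}_1 \cup \mathcal{C}_2''$, a Taylor expansion in $1/\sqrt{t}$ holds uniformly. For \eqref{Fest2} I would compute
\[
(a^{-1} - \zeta/\sqrt{t})(a + \zeta/\sqrt{t}) \;=\; 1 + \zeta(a^{-1}-a)/\sqrt{t} - \zeta^2/t \;=\; 1 + (2\beta\zeta - \zeta^2)/t + O(t^{-3/2}),
\]
using $a^{-1}-a = 2\beta/\sqrt{t} + O(1/t)$. Taking the logarithm, multiplying by $t$, and exponentiating yields $F_{0,t}(\zeta) = f_\beta(\zeta)(1 + O(1/\sqrt{t}))$ uniformly in $\zeta$. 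For \eqref{Gest} the point is that the leading $O(1/\sqrt{t})$ contributions of $\log(a^{-1} - \zeta/\sqrt{t})$ and $\log(a + \zeta/\sqrt{t})$ have \emph{opposite} signs, so
\[
\log(a^{-1} - \zeta/\sqrt{t}) - \log(a + \zeta/\sqrt{t}) \;=\; \frac{2(\beta-\zeta)}{\sqrt{t}} + O(1/t).
\]
Multiplying by $x\sqrt{t}$ and exponentiating then gives $G_{x,t}(\zeta)/g_{x,\beta}(\zeta) = 1 + O(1/\sqrt{t})$ uniformly for $|x|\le A$ and $\zeta$ on the compact contour.

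The main obstacle I anticipate is the last step above: the $\sqrt{t}$ prefactor in the exponent of $G_{x,t}$ threatens to magnify the $O(1/\sqrt{t})$ remainder into an $O(1)$ error, which would destroy \eqref{Gest}. The rescue is the cancellation between the two $O(1/\sqrt{t})$ leading contributions of the two logarithms, so that after subtraction the first non-cancelling term is $O(1/t)$ and survives multiplication by $x\sqrt{t}$ as $O(1/\sqrt{t})$. Tracking this cancellation carefully — together with verifying that no small denominators appear on $\mathcal{C}_1 \cup \mathcal{C}_2''$, so that the log Taylor series converges uniformly — gives the uniform constant $C$ and threshold $t_0$ claimed in the lemma.
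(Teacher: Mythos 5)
Your treatment of (\ref{Gest}) and (\ref{Fest2}) is correct and is essentially the paper's argument: Taylor-expand the logarithms on the fixed compact set $\mathcal{C}_1\cup\mathcal{C}_2''$, use $a^{-1}-a=2\beta/\sqrt{t}+O(1/t)$, and observe that the potentially dangerous prefactor $x\sqrt{t}$ (resp.\ $t$) is absorbed exactly by the leading term of the expansion, leaving an $O(1/\sqrt{t})$ remainder uniformly in $|x|\le A$ and in $\zeta$ on the contour. The exact modulus computations you set up for (\ref{Fest}), in particular $|a+(\beta+\I s)/\sqrt{t}|^2=1+s^2/t$ and the reality and size of $a^{-1}-\beta/\sqrt{t}$, also match the paper.

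However, the final step of your proof of (\ref{Fest}) fails. From $|F_{x,t}(\beta+\I s)|^2\ge 1+2^{k-1}s^{2k}/k!$ you cannot conclude $|F_{x,t}(\beta+\I s)|\ge 1+s^{2k}/(2^kk!)$ ``by a square root with room to spare'': writing $u=s^{2k}/k!$, the needed inequality $\sqrt{1+2^{k-1}u}\ge 1+2^{-k}u$ has a left side growing like $\sqrt{u}\sim|s|^{k}$ and a right side growing like $u\sim|s|^{2k}$, so it is false for all large $|s|$; already for $k=1$ it fails for every $s\ne 0$, since $(1+s^2/2)^2=1+s^2+s^4/4>1+s^2$. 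Because (\ref{Fest}) must hold for all $s\in\BR$ (it is used to dominate the integrand on the unbounded contour $\mathcal{C}_2'$), this is a genuine gap, though an easily repaired one: since $|F_{x,t}(\beta+\I s)|^2\ge(1+s^2/t)^{2t}$ you have directly $|F_{x,t}(\beta+\I s)|\ge(1+s^2/t)^{t}$, and keeping the constant term together with the $k$-th term of the binomial expansion gives
\begin{equation}
\left(1+\frac{s^2}{t}\right)^{t}\ge 1+\binom{t}{k}\frac{s^{2k}}{t^k}\ge 1+\frac{(t/2)^k}{t^kk!}\,s^{2k}=1+\frac{s^{2k}}{2^kk!}
\end{equation}
for $k\le t/2$, which is exactly the paper's route and avoids the square root entirely.
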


\begin{proof}
We have, since $a=1-\beta/\sqrt{t}$, that
$$
|F_{x,t}(\beta+\I s)|=\left|a^{-1}-\frac{\beta+\I s}{\sqrt{t}}\right|^{t+x\sqrt{t}}\left|a+\frac{\beta+\I s}{\sqrt{t}}\right|^{t-x\sqrt{t} }.
$$
Now,
$$
\left|a+\frac{\beta+\I s}{\sqrt{t}}\right|^2=1+\frac{s^2}t
$$
and
$$
\left|a^{-1}-\frac{\beta+\I s}{\sqrt{t}}\right|^2=\left(\frac 1{1-\beta/\sqrt{t}}-\beta{\sqrt{t}}\right)^2+\frac{s^2}t\ge 1+\frac{s^2}t
$$
when $t$ is large enough. Thus, 
$$
|F_{x,t}(\beta+\I s)|\ge \left(1+\frac{s^2}t\right)^{\frac 12(t+ x\sqrt{t} )}\left(1+\frac{s^2}t\right)^{\frac 12(t- x\sqrt{t}) }=\left(1+\frac{s^2}t\right)^t.
$$
By the binomial theorem,
\begin{align}
\left(1+\frac{s^2}t\right)^t&=1+\sum_{k=1}^t\frac{t\dots(t-k+1)}{t^kk!}s^{2k}\ge 1+\frac{t\dots(t-k+1)}{t^kk!}s^{2k}\\
&\ge 1+\frac{(t/2)^k}{t^kk!}s^{2k}=1+\frac{s^{2k}}{2^kk!}
\end{align}
if $k\le t/2$. This proves (\ref{Fest}). Note that $\mathcal{C}_1\cup\mathcal{C}_2''$ is a fixed compact set. The estimates (\ref{Gest}) and
(\ref{Fest2}) follow from the inequalities
\begin{align}
\left| x\sqrt{t} \log(a+\zeta/\sqrt{t})-x(\zeta-\beta)\right|&\le\frac C{\sqrt{t}},
\end{align}
\begin{align}
\left| x\sqrt{t} \log(a^{-1}-\zeta/\sqrt{t})-x(\beta-\zeta)\right|&\le\frac C{\sqrt{t}},
\end{align}
and
$$
\left|t\log(a^{-1}-\zeta/\sqrt{t})(a+\zeta/\sqrt{t})-(2\zeta \beta -\zeta^2)\right|\le\frac C{\sqrt{t}},
$$
for sufficiently large $t$, which in turn follow from Taylor's theorem.
\end{proof}

Consider first $C^{(1)}_{2t+\epsilon,\rho,0}$. The case $y_1=y_2$ is special. In this case we obtain 
\begin{align}
&C^{(1)}_{2t+\epsilon,\rho,0}(u_1,y_1;u_2,y_2)\\
&=-(-\sqrt{t})^{u_2-u_1}\sqrt{t}\Id_{u_2<u_1}\frac{1+a^2}{2\pi \I}\oint_{\Gamma_{0,a}}\frac{w^{u_2-u_1}}{(1+aw)(w-a)}~dw\\
&=(-\sqrt{t})^{u_2-u_1}\sqrt{t}\Id_{u_2<u_1}\frac{a+1/a}{2\pi \I}\oint_{\Gamma_{-a^{-1}}}\frac{w^{u_2-u_1}}{(w+1/a)(w-a)}~dw\\
&=-(-\sqrt{t})^{u_2-u_1}\sqrt{t}\Id_{u_2<u_1}(-1/a)^{u_2-u_1}.
\end{align}
In the second inequality we deformed the contour through infinity to a contour surrounding $-1/a$.
If $u_2=u_1-1$ this equals $-a$ which goes to $-1$ as $t\to\infty$. If $u_2<u_1-1$ the last expression goes to $0$ as $t\to\infty$.

If $y_1>y_2$ then deforming the contour to $\Gamma_{-1/a}$ shows that $C^{(1)}_{2t+\epsilon,\rho,0}=0$. Assume that $y_1<y_2$. Then, for large enough $t$, 
since $w=a$ is not a pole,
\begin{align}
&C^{(1)}_{2t+\epsilon,\rho,0}(u_1,y_1;u_2,y_2)\\
&=-\Id_{u_2<u_1}\frac{1+a^2}{2\pi\I}\oint_{\Gamma_0}\left(\frac{a^{-1}+w}{a-w}\right)^{(y_1-y_2)\sqrt{t}}\frac{(-w\sqrt{t})^{u_2-u_1}}
{(1+aw)(a-w)}(-\sqrt{t})~dw\\
&=-\Id_{u_2<u_1}\frac{1+a^2}{2\pi\I}\oint_{\mathcal{C}_1} G_{y_1-y_2,t}(\omega)\frac{\omega^{u_2-u_1}}{(1-a\omega/\sqrt{t})(a+\omega/\sqrt{t})}~d\omega
\end{align}
by the change of variables $w=-\omega/\sqrt{t}$.
It now follows from lemma \ref{Est} that
\begin{align}
&\lim_{t\to\infty}C^{(1)}_{2t+\epsilon,\rho,0}(u_1,y_1;u_2,y_2)\\
&=-\Id_{u_2<u_1}\frac{2}{2\pi\I}\oint_{\mathcal{C}_1} e^{2(y_2-y_1)(\omega-\beta)}\frac{d\omega}{\omega^{u_1-u_2}}\\
&=-\Id_{u_2<u_1}e^{2\beta(y_1-y_2)}2^{u_1-u_2}\frac{(y_2-y_1)^{u_1-u_2-1}}{(u_1-u_2-1)!}.
\end{align}
Thus, for all $y_1,y_2$,
\begin{align}\label{C1lim}
&\lim_{t\to\infty}C^{(1)}_{2t+\epsilon,\rho,0}(u_1,y_1;u_2,y_2)\\
&=-\Id_{u_1=u_2+1}\Id_{y_1=y_2}-
\Id_{u_2<u_1}\Id_{y_1<y_2}e^{2\beta(y_1-y_2)}2^{u_1-u_2}\frac{(y_2-y_1)^{u_1-u_2-1}}{(u_1-u_2-1)!}\\
&=-e^{2\beta(y_1-y_2)}2^{u_1-u_2}H^{u_1-u_2}(y_2-y_1),
\end{align}
where
$$
H^m(z)=\frac{z^{m-1}}{(m-1)!}(\Id_{z>0}+\frac 12\Id_{z=0})
$$
for $m\ge 1$.

Consider now $C^{(2)}_{2t+\epsilon,\rho,0}$. We can write, using the fact the $z$ contour has no pole at $z=a$ for $t$ large, and by completing the $\overline{\mathcal{C}}_2/\sqrt{t}$ contour with an infinite semi-circle on the right,  
\begin{align} \nonumber
&C^{(2)}_{2t+\epsilon,\rho,0}(u_1,y_1;u_2,y_2)=\frac{1+a^2}{(2\pi\I)^2}\oint_{\mathcal{C}_1/\sqrt{t}}dz(-\sqrt{t})\\
&\oint_{\overline{\mathcal{C}_2}/\sqrt{t}}\frac{dw}{w-z}\frac{(-w\sqrt{t})^{u_2}
(a^{-1}+z)^{t+y_1\sqrt{t}}(a-z)^{t-y_1\sqrt{t}+\epsilon}}{(-z\sqrt{t})^{u_1}
(a^{-1}+w)^{t+y_2\sqrt{t}}(a-w)^{t-y_2\sqrt{t}+\epsilon}}\frac 1{(1+az)(a-w)}\\
&=\frac{1+a^2}{(2\pi\I)^2}\oint_{\mathcal{C}_1}~d\zeta\oint_{{\mathcal{C}_2}}\frac{d\omega}{\omega-\zeta}\frac{\omega^{u_2}
F_{y_1,t}(\zeta)}{\zeta^{u_1}
F_{y_2,t}(\omega)}\frac {(a+\zeta/\sqrt{t})^\epsilon}{(1-a\zeta/\sqrt{t})(a+\omega/\sqrt{t})^{1+\epsilon}}.
\end{align}
It now follows from lemma \ref{Est} that
\begin{align}\label{C2lim}
&\lim_{t\to\infty}C^{(2)}_{2t+\epsilon,\rho,0}(u_1,y_1;u_2,y_2)\\
&=\frac{2}{(2\pi\I)^2}\oint_{\mathcal{C}_1}~d\zeta\oint_{{\mathcal{C}_2}}\frac{d\omega}{\omega-\zeta}\frac{\omega^{u_2}
f_\beta(\zeta)g_{y_1,\beta}(\zeta)}{\zeta^{u_1}
f_\beta(\omega)g_{y_2,\beta}(\omega)}\nonumber\\
&=e^{2\beta(y_1-y_2)}\frac{2}{(2\pi\I)^2}\oint_{\mathcal{C}_1}~d\zeta\oint_{\mathcal{C}_2}\frac{d\omega}{\omega-\zeta}\frac{\omega^{u_2}
e^{-\zeta^2+2(\beta-y_1)\zeta}}{\zeta^{u_1}
e^{-\omega^2+2(\beta-y_2)\omega}}. \nonumber
\end{align}
We choose $k$ in (\ref{Fest}) so that $2k>u_2$, which gives a uniform $t$-independent upper bound on $\mathcal{C}_2'$.
Combining (\ref{C1lim}) and (\ref{C2lim}) we see that
\begin{equation}\label{C12lim}
\lim_{t\to\infty}(C^{(1)}_{2t+\epsilon,\rho,0}+C^{(2)}_{2t+\epsilon,\rho,0})(u_1,y_1;u_2,y_2)=
e^{2\beta(y_1-y_2)}\BK^{\text{minor}}(u_1,\beta-y_1;u_2,\beta-y_2).
\end{equation}

Next, consider
\begin{align}
&C^{(3)}_{2t+\epsilon,\rho,0}(u_1,y_1;~u_2,y_2)=-(-a)^{(y_2-y_1)\sqrt{t}}(-\sqrt{t})^{u_2-u_1}\sqrt{t}(1+a^2)\\
&\times \sum_{\kappa,\lambda=0}^\infty B_{4t+2\epsilon-2u_2,2t+2y_2\sqrt{t}-1} (\lambda+2t+\epsilon-\rho+1)\\
&\times (I-K_{2t+\epsilon})^{-1}_{\ge 0}(\lambda+2t+\epsilon-\rho+1,\kappa+2t+\epsilon-\rho+1)\\
&\times A_{4t+2\epsilon-2u_1,2t+2y_1\sqrt{t}-1,0}(\kappa+2t+\epsilon-\rho+1),
\end{align}
where $A_{\xi_1,\eta_1,0}$ is given by (\ref{Adelta}),
\begin{align}
&B_{4t+2\epsilon-2u_2,2t+2y_2\sqrt{t}-1} (j)\\
&=\frac{(-1)^j}{2\pi\I}\int_{\Gamma_{0,a}}(1+az)^{t-y_2\sqrt{t}+\epsilon}
(z-a)^{t+y_2\sqrt{t}}z^{u_2-1-2t-\epsilon+j}~dz\\
&+\frac{(-1)^j}{(2\pi\I)^2}\int_{\Gamma_{0,a}}dw\int_{\Gamma_{0,a,w}}\frac{dz}{w-z}
\frac{z^{u_2}(1+aw)^{2t+\epsilon}(w-a)^{2t+\epsilon}}{w^{2t+\epsilon-j+1}(1+az)^{t+y_2\sqrt{t}}(z-a)^{t-y_2\sqrt{t}+1+\epsilon}}
\end{align}
and
\begin{equation}
K_{2t+\epsilon}(j,k)=\frac{(-1)^{j+k}}{(2\pi\I)^2}\int_{\Gamma_{0,a}}dw\int_{\Gamma_{0,a,w}}\frac{dz}{z-w}
\frac{z^{2t+\epsilon-j}(1+aw)^{2t+\epsilon}(w-a)^{2t+\epsilon+1}}{w^{2t+\epsilon-k+1}(1+az)^{2t+\epsilon}(z-a)^{2t+\epsilon+1}}.
\end{equation}
Set
\begin{align}
\tilde{A}_{u_1,y_1,0}(\kappa)&=(-a)^{t-y_1\sqrt{t}}(-\sqrt{t})^{-u_1}(\sqrt{t})^{\rho-\kappa-1}\sqrt{t}(-1)^{-\epsilon}\\
&\times A_{4t+2\epsilon-2u_1,2t+2y_1\sqrt{t}-1,0}(\kappa+2t+\epsilon-\rho+1),
\end{align}
\begin{align}
\tilde{B}_{u_2,y_2}(\lambda)&=(-a)^{y_2\sqrt{t}-t}(-\sqrt{t})^{u_2}(\sqrt{t})^{\lambda-\rho+1}(-1)^{-\epsilon}\\
&\times B_{4t+2\epsilon-2u_2,2t+2y_2\sqrt{t}-1}(\lambda+2t+\epsilon-\rho+1)
\end{align}
and
\begin{equation}
\tilde{K}_{2t+\epsilon}(\lambda,\kappa)=(\sqrt{t})^{\kappa-\lambda}K_{2t+\epsilon}(\lambda+2t+\epsilon-\rho+1,\kappa+2t+\epsilon-\rho+1).
\end{equation}
Note that the matrix with elements
$$
(\sqrt{t})^{\kappa-\lambda}(I-K_{2t+\epsilon})^{-1}_{\ge 0}(\lambda+2t+\epsilon-\rho+1,\kappa+2t+\epsilon-\rho+1)
$$
is the inverse of the matrix with elements
$\delta_{\kappa,\lambda}-\tilde{K}_{2t+\epsilon}(\lambda,\kappa)$.
Thus,
\begin{equation}\label{C3mod}
C^{(3)}_{2t+\epsilon,\rho,0}(u_1,y_1;~u_2,y_2)=-(1+a^2)\sum_{\kappa,\lambda=0}^\infty\tilde{B}_{u_2,y_2}(\lambda)(I-\tilde{K}_{2t+\epsilon})^{-1}_{\ge 0}
\tilde{A}_{u_1,y_1,0}(\kappa)
\end{equation}
and we want to take the limit of this sum. (Note that the sum is finite even in the limit.) 

Rewriting in the same way as above we see from (\ref{Adelta}) that
\begin{align}
&\tilde{A}_{u_1,y_1,0}(\kappa)=-\frac 1{2\pi \I}\oint_{\mathcal{C}_2}\frac{\omega^{-\kappa-u_1+\rho-1}}{F_{-y_1,t}(\omega)}\frac{d\omega}
{(1-a\omega/\sqrt{t})^{1+\epsilon}(a+\omega/\sqrt{t})}\\
&+\frac 1{(2\pi \I)^2}\oint_{\mathcal{C}_1}d\zeta\oint_{\mathcal{C}_2}\frac{d\omega}{\omega-\zeta}\frac{\zeta^{-u_1}F_{y_1,t}(\zeta)}
{\omega^{\kappa+1-\rho}F_{0,t}(\omega)^2}\frac{(a+\zeta/\sqrt{t})^\epsilon}{(1-a\zeta/\sqrt{t})(a+\omega/\sqrt{t})^{1+\epsilon}(1-a\omega/\sqrt{t})^\epsilon}.
\end{align}
Using lemma \ref{Est} we can take the limit $t\to\infty$ to get
\begin{align}\label{Alim}
\lim_{t\to\infty}\tilde{A}_{u_1,y_1,0}(\kappa)&=-\frac 1{2\pi \I}\oint_{\mathcal{C}_2}\frac{\omega^{-\kappa-u_1+\rho-1}}{f_\beta(\omega)g_{-y_1,\beta}(\omega)}~d\omega\\
&+\frac 1{(2\pi \I)^2}\oint_{\mathcal{C}_1}d\zeta\oint_{\mathcal{C}_2}\frac{d\omega}{\omega-\zeta}\frac{\zeta^{-u_1}f_\beta(\zeta)g_{y_1,\beta}(\zeta)}
{\omega^{\kappa+1-\rho}f_\beta(\omega)^2}\\
&=-e^{2\beta y_1}\mathcal{A}_{u_1}^{\beta,y_1-\beta}(\kappa-\rho).
\end{align}
Similarly we get
\begin{align}
&\tilde{B}_{u_2,y_2}(\lambda)=\frac 1{2\pi \I}\oint_{\mathcal{C}_1}F_{-y_2,t}(\zeta)\zeta^{u_2+\lambda-\rho}(1-a\zeta/\sqrt{t})^\epsilon~d\zeta\\
&+\frac 1{(2\pi \I)^2}\oint_{\mathcal{C}_1}d\omega\oint_{\mathcal{C}_2}\frac{d\zeta}{\omega-\zeta}\frac{\omega^{\lambda-\rho}F_{0,t}(\omega)^2}
{\zeta^{-u_2}F_{y_2,t}(\zeta)}\frac{(a+\omega/\sqrt{t})^{1+\epsilon}(1-a\omega/\sqrt{t})^\epsilon}{(a+\zeta/\sqrt{t})^{1+\epsilon}}
\end{align}
and again by lemma \ref{Est} we find
\begin{align}\label{Blim}
\lim_{t\to\infty}\tilde{B}_{u_2,y_2}(\lambda)&=\frac 1{2\pi \I}\oint_{\mathcal{C}_1}f_\beta(\zeta)g_{-y_2,\beta}(\zeta)\zeta^{u_2+\lambda-\rho}~d\zeta\\
&+\frac 1{(2\pi \I)^2}\oint_{\mathcal{C}_1}d\omega\oint_{\mathcal{C}_2}\frac{d\zeta}{\omega-\zeta}\frac{\omega^{\lambda-\rho}f_\beta(\omega)^2}
{\zeta^{-u_2}f_\beta(\zeta)g_{y_2,\beta}(\zeta)}\\
&=e^{-2\beta y_2}\mathcal{B}_{u_2}^{\beta,y_2-\beta}(\lambda-\rho).
\end{align}
Finally,
\begin{equation}
\tilde{K}_{2t+\epsilon}(\lambda,\kappa)=\frac 1{(2\pi \I)^2}\oint_{\mathcal{C}_1}d\omega\oint_{\mathcal{C}_2}\frac{d\zeta}{\zeta-\omega}
\frac{\omega^{\kappa-\rho}F_{0,t(\omega)^2}}{\zeta^{\lambda-\rho+1}F_{0,t}(\zeta)^2}\frac{(1-a\omega/\sqrt{t})^\epsilon(a+\omega/\sqrt{t})^{\epsilon+1}}
{(1-a\zeta/\sqrt{t})^\epsilon(a+\zeta/\sqrt{t})^{\epsilon+1}}
\end{equation}
and we see from lemma \ref{Est} that
\begin{align}\label{Ktildelim}
\lim_{t\to\infty}\tilde{K}_{2t+\epsilon}(\lambda,\kappa)&=
\frac 1{(2\pi \I)^2}\oint_{\mathcal{C}_1}d\omega\oint_{\mathcal{C}_2}\frac{d\zeta}{\zeta-\omega}
\frac{\omega^{\kappa-\rho} f_\beta(\omega)^2}{\zeta^{\lambda-\rho+1}f_\beta(\zeta)^2}\\
&=\mathcal{K}^{\beta}(\lambda-\rho,\kappa-\rho).
\end{align}
It now follows from (\ref{C3mod}), (\ref{Alim}), (\ref{Blim}) and (\ref{Ktildelim}) that
\begin{equation}
\lim_{t\to\infty}C^{(3)}_{2t+\epsilon,\rho,0}(u_1,y_1;~u_2,y_2)=
2e^{2\beta(y_1-y_2)}\langle(I-\mathcal{K}^\beta)_{\ge-\rho}^{-1}\mathcal{A}_{u_1}^{\beta,y_1-\beta}(\lambda),{B}_{u_2}^{\beta,y_2-\beta}(\lambda)
\rangle_{\ge-\rho}.
\end{equation}
Together with (\ref{C12lim}) this proves (\ref{limC}). 

It is not difficult to get $t$-independent bounds on the $\BL$-kernel using the same arguments as above and in this
way we can show, in a standard manner, that the appropriate Fredholm determinant converges and obtain weak convergence of the
$\BL$-particle point process. We will not enter into the details.


\section{Proof of the inverse Kasteleyn formula}
\label{Sec:inverseK}

In this section we prove Theorem~\ref{thm:inverseK}. 
We will use the fact that
\begin{equation}
\begin{split}
&\mathbb{K}_{n,\rho} \left(b_2+1,\frac{b_2-b_1+2m+1}{2};w_2+1,\frac{w_2-w_1+2m+1}{2} \right)\\
&=-K_{n,m}^{\mathrm{inlier}} \left(b_2+1,\frac{b_2-b_1+2m+1}{2};w_2+1,\frac{w_2-w_1+2m+1}{2} \right),
\label{kasttokkernel}
\end{split}
\end{equation}
where the kernel $K_{n,m}^{\mathrm{inlier}}$ is the inlier kernel from \cite{AJvM}, dual to $\mathbb{K}_{n,p}$. We will use the form of the inlier kernel that
comes directly from the Eynard-Mehta theorem.
Let 
\begin{equation} \label{inverseK:defn_of_psi}
\begin{split}
\tilde{\psi}_{2r+\varepsilon_1,2s+\varepsilon_2}(x,y)&=\psi_{2r+\varepsilon_1,2s+\varepsilon_2}(x,y)\Id_{2r+\varepsilon_1<2s+\varepsilon_2} \\
\end{split}
\end{equation}
where $\psi$ is defined in~\eqref{K12}.

Let $\mathtt{w}=(w_1,w_2) \in \mathtt{W}$, $\mathtt{b}=(b_1,b_2) \in \mathtt{B}$, $u_1=w_2+1$, $u_2=(w_2-w_1+2m+1)/2$, $v_1=b_2+1$, $v_2=(b_2-b_1+2m+1)/2$ and denote
\begin{equation} \label{inverseK:defn_of_sgn}
      \mathrm{sgn}(\mathtt{w}, \mathtt{b}) =  (-1)^{(w_1-w_2+b_1-b_2+2)/4} 
\end{equation}

Define
\begin{equation} \label{inverseK:defn_of_f1}
     \tilde{ f}_1(\mathtt{w}, \mathtt{b} ) = - \mathrm{sgn}(\mathtt{w},\mathtt{b})   \tilde{\psi}_{v_1,u_1} ( v_2,u_2)
\end{equation}
and
\begin{equation} \label{inverseK:defn_of_f2}
    \tilde{  f}_2 (\mathtt{w}, \mathtt{b} )=  \mathrm{sgn}(\mathtt{w},\mathtt{b})  \sum_{i,j=1}^{2m+1} \tilde{\psi}_{v_1,2n+1} (v_2,i-m-1) (A^{-1})_{i,j} \tilde{\psi}_{0,u_1} (j-m-1,u_2) 
\end{equation}
where 
\begin{equation} \label{inverseK:matrixA}
      A=\left( \tilde{\psi}_{0,2n+1} ( i-m-1,j-m-1) \right)_{i,j=1}^{2m+1}. 
\end{equation}

We then get
\begin{equation}
\begin{split}
&-(-1)^{(w_1-w_2+b_1-b_2+2)/4}\mathbb{K}_{n,\rho} \left(v_1,v_2;u_1,u_2 \right)
=\tilde{f}_1(\mathtt{w},\mathtt{b})+\tilde{f}_2(\mathtt{w},\mathtt{b})
\end{split}
\end{equation}
and we want to prove that
\begin{equation}
 K_a^{-1} (\mathtt{w},\mathtt{b})=\tilde{f}_1(\mathtt{w},\mathtt{b})+\tilde{f}_2(\mathtt{w},\mathtt{b}).
\end{equation}

To make the computations simpler, we define $T_a$ and $C$ with
 \begin{equation} \nonumber
  K_a( \mathtt{b},\mathtt{w}) =- (-1)^{(b_1+b_2-1)/2} T_a(\mathtt{b},\mathtt{w})
 \end{equation}
 and
 \begin{equation}
    \tilde{f}_1(\mathtt{w},\mathtt{b})+ \tilde{f}_2 (\mathtt{w},\mathtt{b})=- (-1)^{-(b_1+b_2-1)/2} C(\mathtt{w},\mathtt{b}).
 \end{equation}
and we will write $$f_i(\mathtt{w},\mathtt{b})=- (-1)^{(b_1+b_2-1)/2} \tilde{f}_i(\mathtt{w},\mathtt{b}).$$
Therefore, showing $K_a.(\tilde{f}_1+\tilde{f}_2) = \mathbbm{I}$ is equivalent to showing $T_a.C=\mathbbm{I}$.

 We will use the notation that $\mathtt{b}=(b_1,b_2)$ and $\mathtt{y}=(y_1,y_2)$ are black vertices.  We have that
 \begin{equation} \label{inverseK:KtimesKinv}
	(T_af_i)(\mathtt{b},\mathtt{y}) =\sum_{\mathtt{w} \sim \mathtt{b}} T_a(\mathtt{b},\mathtt{w}) f_i(\mathtt{w},\mathtt{y})
 \end{equation}
for $i \in \{1,2\}$ where $\mathtt{w} \sim \mathtt{y}$ means that $\mathtt{w}$ is nearest neighbors to $\mathtt{b}$ because $T_a(\mathtt{b}, \mathtt{w})=0$ if $\mathtt{b}$ and $\mathtt{w}$ are not nearest neighbors. We can then write
\begin{equation} \nonumber
	(T_a C) (\mathtt{b}, \mathtt{w}) = \sum_{ i \in \{1, 2\}}( T_a f_i)(\mathtt{b},\mathtt{y}). 
\end{equation}

 The number of terms on the right hand side of equation~\eqref{inverseK:KtimesKinv} is dependent on the location of  $\mathtt{b}$ and so we split the computation for finding $T_aC(\mathtt{b}, \mathtt{y})$ into the different locations of $\mathtt{b}$. These are given by
 \begin{enumerate}[(i)]
  \item  the interior, labeled $\mathcal{I}$,
  \item  the left hand boundary, labeled $\mathcal{L}$,
  \item  the bottom boundary, labeled $\mathcal{B}$, 
  \item  the top boundary but not equal to $(2n,2n-1)$, labeled $\mathcal{T}$ and 
  \item  the special point, $(2n,2n-1)$.
 \end{enumerate}

The left hand boundary, $\mathcal{L}$, consists of vertices $\mathtt{b}=(0,b_2)$ where $b_2 \in 2\mathbb{Z}+1$ and $1 \leq b_2 \leq 2n-1$.  For $\mathtt{b} \in \mathcal{L}$, we have that $\mathtt{b}$ has two neighboring white vertices given by $\mathtt{b}+e_1$ and $\mathtt{b}-e_2$. 

The bottom boundary, $\mathcal{B}$, consists of vertices $\mathtt{b}=(b_1,-1)$ where $b_1 \in 2 \mathbb{Z}$ and $4m+2 \leq b_1 \leq 4m+2n$. For $\mathtt{b} \in \mathcal{B}$, we have that $\mathtt{b}$ has two neighboring white vertices given by $\mathtt{b}+e_1$ and $\mathtt{b}+e_2$. 

The top boundary, $\mathcal{T}$,  consists of vertices $\mathtt{b}=(b_1,2n-1)$ where $b_1 \in 2 \mathbb{Z}$ and $2n+2 \leq b_1 \leq 4m+2n$. For $\mathtt{b} \in \mathcal{T}$, we have that $\mathtt{b}$ has two neighboring white vertices given by $\mathtt{b}-e_1$ and $\mathtt{b}-e_2$. 

For the special point, $\mathtt{b}=(2n,2n-1)$, we have that $\mathtt{b}$ has three neighboring white vertices given by $\mathtt{b}+e_2, \mathtt{b}-e_2$ and $\mathtt{b}-e_1$. 

The interior, $\mathcal{I}$, is given by the remaining vertices.  For $\mathtt{b} \in \mathcal{I}$, we have that $\mathtt{b}$ has four neighboring white vertices given by $\mathtt{b} \pm e_r$ for $r \in \{1,2\}$. 

In each of the above cases, we evaluate~\eqref{inverseK:KtimesKinv}.   
Due to the formulas for $f_1$ and $f_2$ being rather complicated, we used computer algebra to help with the computations. We give the calculation for the first case with full details and for the remaining cases, we provide an overview of the main steps.
We now proceed with checking the above cases.

\subsection*{The interior} 
 Using~\eqref{inverseK:defn_of_K} and the definition of $T_a(\mathtt{b}, \mathtt{w})$, we have that for $b\in \mathcal{I}$
\begin{equation}
\begin{split} \nonumber
	T_a(\mathtt{b},\mathtt{w})= \left\{
		\begin{array}{ll}
		\pm 1 &\mbox{if } \mathtt{w}=\mathtt{b}\pm e_1 \\
		\mp a \mathrm{i} & \mbox{if }\mathtt{w}=\mathtt{b}\pm e_2 \\
		0 & \mbox{otherwise}.  
		\end{array}	\right.
\end{split}
\end{equation}
 When $\mathtt{b} \in \mathcal{I} $ and $\mathtt{y} \in  \mathtt{B}_{AD}$, we have that~\eqref{inverseK:KtimesKinv} reads
 \begin{equation} \label{inverseK: T_a fia}
	 T_a f_i(\mathtt{b},\mathtt{y})= f_i(\mathtt{b}+e_1,\mathtt{y})- f_i(\mathtt{b}-e_1,\mathtt{y}) + (f_i(\mathtt{b}-e_2,\mathtt{y}) - f_i(\mathtt{b}+e_2,\mathtt{y})) a \mathrm{i}  
\end{equation}
for $i \in \{1,2\}$.

We first simplify~\eqref{inverseK: T_a fia} for $i=1$.  We can expand out the definition of $f_1$ in terms of $\tilde{\psi}$.  This means that we can rewrite~\eqref{inverseK: T_a fia} for $i=1$ in terms of  $\tilde{\psi}$. We obtain
\begin{equation}
\begin{split} \label{inverseK:interiorf1expand}
&T_a f_1(\mathtt{b},\mathtt{y})= -(-1)^{(b_1-b_2+y_1-y_2)/4} \mathrm{i}^{y_1+y_2}\\
& \left( a \tilde{\psi}_{y_2+1,b_2} \left( \frac{2m+1-y_1+y_2}{2}, \frac{2m-1-b_1+b_2}{2} \right)\right.\\
&+\tilde{\psi}_{y_2+1,b_2} \left( \frac{2m+1-y_1+y_2}{2} , \frac{2m+1-b_1+b_2}{2} \right) \\
&-\tilde{\psi}_{y_2+1,b_2+2} \left( \frac{2m+1-y_1+y_2}{2} , \frac{2m+1-b_1+b_2}{2} \right) \\
&+\left.a \tilde{\psi}_{y_2+1,b_2+2} \left( \frac{2m+1-y_1+y_2}{2} , \frac{2m+3-b_1+b_2}{2} \right)  \right)
\end{split}
\end{equation}
where $\tilde{\psi}$ is defined in~\eqref{inverseK:defn_of_psi}.  We shall evaluate $T_af_1(\mathtt{b},\mathtt{w})$ in three cases: $b_2=y_2, b_2>y_2$ and $b_2<y_2$. 

For $y_2=b_2$, we only need to consider the last two terms of~\eqref{inverseK:interiorf1expand} because the first two terms 
 involving $\tilde{\psi}$ are zero by~\eqref{inverseK:defn_of_psi}.  Using~\eqref{inverseK:defn_of_psi}, we can rewrite~\eqref{inverseK:interiorf1expand} in terms of $\psi$ and hence write each expression as an integral.  We find  for $b_2=y_2$
\begin{align} \nonumber
      T_af_1(\mathtt{b}, \mathtt{y} )
&= (-1)^{\frac{b_1+y_1}{4}} \mathrm{i}^{y_1} \left(
 {\psi}_{y_2+1,y_2+2} \left( \frac{2m+1-y_1+y_2}{2} , \frac{2m+1-b_1+y_2}{2} \right) \right. \\ 
&-\left.a {\psi}_{y_2+1,y_2+2} \left( \frac{2m+1-y_1+y_2}{2} , \frac{2m+3-b_1+y_2}{2}\right) \right) \\
&=\frac{ (-1)^{\frac{b_1+y_1}{4}} \mathrm{i}^{y_1}}{2 \pi \mathrm{i}}
 \int_{\Gamma_{0,a}} \left(-\frac{w^{\frac{1}{2} \left(b_1-y_1\right)}}{a-w}+\frac{a w^{\frac{1}{2} \left(b_1-y_1-2\right)}}{a-w}\right)dw 
\\
 &=\frac{ (-1)^{\frac{b_1+y_1}{4}} \mathrm{i}^{y_1}}{2 \pi \mathrm{i}} \int_{\Gamma_{0,a}} w^{\frac{b_1-y_1-2}{2}} dw \\
		&= \left\{ \begin{array}{ll}
		             \mathrm{i}^{2y_1}=1 &\mbox{if } b_1=y_1 \\
		            0 &\mbox{otherwise}
		           \end{array} \right.
\end{align}
because $y_1 \in 2\mathbb{Z}$.

As both $\mathtt{b}$ and $\mathtt{y}$ are black vertices we have that $b_2$ and $y_2$ are both odd integers.  Therefore,  the condition that $b_2>y_2$ is equivalent to $b_2>y_2+1$.  
For $b_2>y_2$, all four terms of~\eqref{inverseK:interiorf1expand} involving $\tilde{\psi}$ are nonzero and each term can be rewritten using $\psi$.  We find that for $b_2>y_2$
\begin{equation}
\begin{split} \label{inverseK:interiorf1expand2}
&T_a f_1(\mathtt{b},\mathtt{y})= -(-1)^{(b_1-b_2+y_1-y_2)/4} \mathrm{i}^{y_1+y_2} \\
&\left( a \psi_{y_2+1,b_2} \left( \frac{2m+1-y_1+y_2}{2}, \frac{2m-1-b_1+b_2}{2} \right)\right.\\
&+\psi_{y_2+1,b_2} \left( \frac{2m+1-y_1+y_2}{2} , \frac{2m+1-b_1+b_2}{2} \right) \\
&-\psi_{y_2+1,b_2+2} \left( \frac{2m+1-y_1+y_2}{2} , \frac{2m+1-b_1+b_2}{2} \right) \\
&+\left.a \psi_{y_2+1,b_2+2} \left( \frac{2m+1-y_1+y_2}{2} , \frac{2m+3-b_1+b_2}{2} \right)  \right)
\end{split}
\end{equation}
To evaluate~\eqref{inverseK:interiorf1expand2}, we need to evaluate an expression of the form
\begin{equation} \label{inverseK:interiorf1expand3}
\begin{split}
&a \psi_{2r,2s+1}(x_1,x_2)+\psi_{2r,2s+1}(x_1,x_2+1) \\& -  \psi_{2r,2s+3}(x_1,x_2+1)+a\psi_{2r,2s+3} (x_1,x_2+2)
\end{split}
\end{equation}
for $r,s,x_1,x_2 \in \mathbb{Z}$.  We can expand~\eqref{inverseK:interiorf1expand3} in terms of its integral decomposition and combine all the terms under one integral.  We obtain
\begin{equation}\nonumber
\oint_{\Gamma_{0,a}} \frac{dz}{2\pi \mathrm{i}} z^{x_1-x_2} \frac{(1+a z)^{s-r}}{(1-\frac{a}{z})^{s-r+1}} \left(a+\frac{1}{z} -\frac{z^{-1} (1+a z)}{1-\frac{a}{z}} + \frac{a z^{-2} (1+az)}{1-\frac{a}{z}}\right).
\end{equation}
In the above equation, the term inside the parenthesis is zero.  This  means  we can write
\begin{equation} \label{inverseK:interiorf1expand4}
\begin{split}
&a\tilde{ \psi}_{2r,2s+1}(x_1,x_2)+\tilde{\psi}_{2r,2s+1}(x_1,x_2+1) \\& - \tilde{ \psi}_{2r,2s+3}(x_1,x_2+1)+a\tilde{\psi}_{2r,2s+3} (x_1,x_2+2)=0 \hspace{20mm}\mbox{for } r<s.
\end{split}
\end{equation}
 Using the relation in~\eqref{inverseK:interiorf1expand4}, we have that right hand side of~\eqref{inverseK:interiorf1expand2} is equal to zero for $b_2>y_2$ which means that $T_a f_1(\mathtt{b},\mathtt{w})=0$ for $b_2>y_2$.  

As $b_2$ and $y_2$ are both odd integers, the condition that $b_2<y_2$ is equivalent to $b_2+1<y_2$.  We can expand~\eqref{inverseK:interiorf1expand} using the definition of $\tilde{\psi}$ given in~\eqref{inverseK:defn_of_psi} and we find that all the terms of~\eqref{inverseK:interiorf1expand} are equal to zero.  Therefore, we have that $T_a f_1(\mathtt{b}, \mathtt{w}) =0$ for $b_2<y_2$.  
 
We have shown that for $b\in \mathcal{I} $
\begin{equation} \nonumber
       T_a f_1(\mathtt{b}, \mathtt{y}) = \left\{ \begin{array}{ll}
                                              1 & \mbox{if } \mathtt{b}=\mathtt{y},    \\
                                              0 & \mbox{ otherwise}.
                                             \end{array} \right.
\end{equation}
For the term $T_af_2(\mathtt{b},\mathtt{y})$, using~\eqref{inverseK: T_a fia} we can write $T_af_2(\mathtt{b},\mathtt{y})$ under one sum.  We obtain 
\begin{equation}\nonumber
\begin{split}
 T_a f_2(\mathtt{b}, \mathtt{y})& = (-1)^{\frac{b_1-b_2+y_1-y_2}{4}} \mathrm{i}^{y_1+y_2} \sum_{i,j=1}^{2m+1} (A^{-1})_{ij} \\
&\tilde{\psi}_{1+y_2,2n+1} \left( \frac{2m+1-y_1+y_2}{2}, i-m-1 \right)\\ 
&\Delta_{\mathcal{I}} \tilde{\psi}_{0,b_2+1} \left(j-m-1,\frac{2m+2-b_1+b_2}{2} \right)
\end{split}
\end{equation}
where 
\begin{equation} \nonumber
\begin{split}
 &\Delta_{\mathcal{I}} \tilde{\psi}_{0,b_2+1} \left(j-m-1,\frac{2m+2-b_1+b_2}{2} \right)=\\
 & a \tilde{\psi}_{0,b_2}\left(j-m-1,\frac{2m-1-b_1+b_2}{2} \right) 
+\tilde{\psi}_{0,b_2}\left(j-m-1,\frac{2m+1-b_1+b_2}{2} \right)  \\
  &-  \tilde{\psi}_{0,b_2+2}\left(j-m-1,\frac{2m+1-b_1+b_2}{2} \right) \\
&+ a \tilde{\psi}_{0,b_2+2}\left(j-m-1,\frac{2m+3-b_1+b_2}{2} \right) 
\end{split}
\end{equation}
As $b_2>0$, we can use the relation given in~\eqref{inverseK:interiorf1expand4} to find that $$\Delta_{\mathcal{I}} \tilde{\psi}_{0,b_2+1} \left(j-m-1,\frac{2m+2-b_1+b_2}{2} \right)=0.$$  Therefore, we have 
\begin{equation}\nonumber
T_af_2 (\mathtt{b}, \mathtt{y})=0 \hspace{10mm} \mathtt{b} \in \mathcal{I}, \mathtt{y} \in  \mathtt{B}_{AD}.  
\end{equation}
To summarize, we have 
\begin{equation}\nonumber
      T_aC(\mathtt{b},\mathtt{y}) = \left\{ \begin{array}{ll}
                                            1 & \mbox{if } \mathtt{b}=\mathtt{y}\\
                                            0 & \mbox{otherwise}.
                                           \end{array}
                                    \right.
\end{equation}

\subsection*{The left hand boundary}
Next, we check $T_a.C$ for $\mathtt{b}$ on the left hand boundary.  For $\mathtt{b} \in \mathcal{L} $ we have that 
\begin{equation} \nonumber
T_a(\mathtt{b},\mathtt{w}) = \left\{ \begin{array}{ll}
			1& \mbox{if } \mathtt{w}= \mathtt{b}+e_1 \\
			a \mathrm{i}& \mbox{if } \mathtt{w}=\mathtt{b}-e_2 \\
			0 & \mbox{otherwise}. \end{array} \right.
\end{equation}
 For  $\mathtt{b}\in \mathcal{L} $ and $\mathtt{y} \in  \mathtt{B}_{AD}$, using the above equation we find that~\eqref{inverseK:KtimesKinv} is given by
\begin{equation}\label{Kinverse:KKinvleft}
	T_af_i (\mathtt{b},\mathtt{y}) = f_i(\mathtt{b}+e_1,\mathtt{y})+f_i(\mathtt{b}-e_2,\mathtt{y})a \mathrm{i}
\end{equation}
Similar to the interior, we can expand $T_a f_1(\mathtt{b},\mathtt{w})$ in terms of $\tilde{\psi}$ and rewrite $T_af_1(\mathtt{b},\mathtt{w})$ in terms of an integral using the definition of $\psi$ given in~\eqref{K12}. By a computation, we find that for $b_2=y_2$, we obtain
\begin{equation} \label{inverseK:LB1}
\begin{split}
      T_af_1 (\mathtt{b},\mathtt{y})&= \frac{1}{2\pi \mathrm{i} } \int_{\Gamma_{0,a}} \frac{e^{3 \pi \mathrm{i}y_1/4} w^{-y_1/2}}{w-a} dw \\    
            &= \left\{ \begin{array}{ll} 
      1 & \mbox{if } y_1=0\\
      0 & \mbox{otherwise}. 
\end{array} \right.
\end{split}
\end{equation}
because $y_1 \in2 \mathbb{N}$, as the integrand has no pole at infinity. 
For $b_2>y_2$, we also find by computation that  
\begin{equation}\nonumber
\begin{split}
       T_a f_1(\mathtt{b},\mathtt{y}) &= \frac{(-1)^{(-b_2+y_1-y_2)/4} \mathrm{i}^{y_1+y_2}}{2 \pi \mathrm{i} } (1+a^2) \\
      &\int_{\Gamma_{0,a}} w^{-y_1/2} (w-a)^{(y_2-b_2-2)/2} (aw+1)^{(b_2-y_2-2)/2} dw
\end{split}
\end{equation}
As $b_2>y_2$ (i.e. $b_2 \geq y_2+2$) and $y_1\geq 0$, the integrand has no pole at infinity or $-1/a$, hence
\begin{equation}
     T_a f_1(\mathtt{b},\mathtt{y}) =0. \label{inverseK:LB2}
\end{equation}
We find that for  $b_2<y_2$ 
\begin{equation} \label{inverseK:LB3}
      T_af_1( \mathtt{b},\mathtt{y})=0
\end{equation}
by using the same reasoning as the case for $\mathtt{b}$ in the interior, that is, each term in the expansion of $T_a f_1(\mathtt{b},\mathtt{y})$ in terms of $\tilde{\psi}$ is equal to zero. 

  Using equations~\eqref{inverseK:LB1},~\eqref{inverseK:LB2} and~\eqref{inverseK:LB3} we have
\begin{equation} \label{inverseK:LB4}
       T_a f_1(\mathtt{b},\mathtt{y}) = \left\{
      \begin{array}{ll}
       1 & \mbox{if } \mathtt{b}=\mathtt{y} \\
       0 & \mbox{otherwise}
      \end{array} \right.
\end{equation}
for $\mathtt{b} \in \mathcal{L}$.  
Similar to the interior case, using the expansion of $T_af_2(\mathtt{b},\mathtt{y})$ given in~\eqref{Kinverse:KKinvleft}, we can expand using the definition of ${f}_2(\mathtt{b},\mathtt{w})$ to obtain
\begin{equation} \label{Kinverse:LBf2}
\begin{split}
   &   T_af_2( \mathtt{b}, \mathtt{y}) =(-1)^{(y_1-b_2-y_2)/4} \mathrm{i}^{y_1+y_2} \sum_{i,j=1}^m  (A^{-1})_{i,j}\\
& \tilde{\psi}_{1+y_2,2n+1} \left( \frac{2m+1-y_1+y_2}{2}, i-m-1 \right)  
      \Delta_{\mathcal{L}} \tilde{\psi}_{0,b_2+1} \left(j-m-1, \frac{2m+2+b_2}{2} \right)
\end{split}
\end{equation}
where 
\begin{equation}\nonumber
\begin{split} 
       \Delta_{\mathcal{L}} \tilde{\psi}_{0,b_2+1} \left(j-m-1, \frac{2m+2+b_2}{2} \right)&=a \tilde{\psi}_{0,b_2}  \left(j-m-1,\frac{2m-1+b_2}{2} \right) \\ 
       &- \tilde{\psi}_{0, b_2+2} \left(j-m-1, \frac{2m+1+b_2}{2} \right).
      \end{split}      
\end{equation}
We can rewrite the right hand side of the above equation in terms of its contour integral using~\eqref{inverseK:defn_of_psi} which gives 
\begin{equation}\nonumber
     - \frac{1}{2\pi \mathrm{i}} \int_{\Gamma_{0,a}} (1+a^2) w^{-1+j-2m} (w-a)^{(-3-b_2)/2} (1+a w)^{(b_2-1)/2} dw.
\end{equation}
Since $1\leq j \leq m, b_2\geq 1$ the integrand has no pole at $-1/a$ or infinity,hence the above quantity is zero and so the right hand side of~\eqref{Kinverse:LBf2} is equal to zero.  Combining with~\eqref{inverseK:LB4} gives
\begin{equation}\nonumber
      T_aC(\mathtt{b},\mathtt{y} ) = \left\{
      \begin{array}{ll}
       1 & \mbox{if } \mathtt{b}=\mathtt{y} \\
       0 & \mbox{otherwise}
      \end{array} \right.
\end{equation}
for $\mathtt{b}\in\mathcal{L}$ and $\mathtt{y} \in  \mathtt{B}_{AD}$. 

\subsection*{The Bottom boundary}
We now consider the bottom boundary. We have that for $\mathtt{b} \in \mathcal{B} $
\begin{equation} \nonumber
	T_a (\mathtt{b},\mathtt{w})= \left\{
			\begin{array}{ll}
				1 & \mbox{if } \mathtt{w}=\mathtt{b}+e_1\\
				-a \mathrm{i} & \mbox{if } \mathtt{w} = \mathtt{b}+e_2\\
				0 & \mbox{otherwise}.
			\end{array} \right.
\end{equation}
Using the above equation,~\eqref{inverseK:KtimesKinv} can be rewritten for  $\mathtt{b} \in \mathcal{B}$ and is given by
\begin{equation} \label{inverseK: T_a fBB}
      T_af_i(\mathtt{b},\mathtt{y}) =f_i (\mathtt{b}+e_1,\mathtt{y})- a\mathrm{i}f_i (\mathtt{b}+e_2,\mathtt{y}).
\end{equation}
We first consider $T_af_1(\mathtt{b},\mathtt{w})$ for $b_2=y_2=-1$. Similar to the analogous computation for $\mathtt{b}$ in the interior, we can expand the right hand side of~\eqref{inverseK: T_a fBB} in terms of $\psi$ and rewrite the expression as an integral. By a computation,  we find that
\begin{equation} \label{inverseK:BB1}
      \begin{split}
      T_af_1( \mathtt{b},\mathtt{y}) &= \frac{(-1)^{(b_1+y_1)/4} \mathrm{i}^{y_1}}{2 \pi \mathrm{i}} \int_{\Gamma_{0,a}}  w^{(b_1-y_1-2)/2} dw \\
      &= \left\{ \begin{array}{ll}
      1 &\mbox{if } \mathtt{b}=\mathtt{y} \\
      0 &\mbox{otherwise}.  \end{array} \right.
      \end{split}
\end{equation}
For $b_2=-1<y_2$, using the same reasoning as given for $\mathtt{b}$ in the interior, we have
\begin{equation} \label{inverseK:BB2}
       T_a f_1(\mathtt{b},\mathtt{y})=0.
\end{equation}
Combining equations~\eqref{inverseK:BB1} and~\eqref{inverseK:BB2}, we have
\begin{equation} \label{inverseK:BB3}
      T_af_1(\mathtt{b},\mathtt{y})= \left\{ \begin{array}{ll} 
                        1   & \mbox{if } b_1=y_1 \\
                        0 & \mbox{otherwise}
                                    \end{array} \right.
\end{equation}
for $\mathtt{b}\in \mathcal{B}$ and $y \in  \mathtt{B}_{AD}$.  For $T_af_2(\mathtt{b},\mathtt{y})$, using~\eqref{inverseK: T_a fBB} and following the same steps for the analogous computation for $\mathtt{b}$ in the interior, we have
\begin{equation}\nonumber
  \begin{split}
      T_af_2( \mathtt{b},\mathtt{y}) &= (-1)^{(b_1-b_2+y_1-y_2)/4} \mathrm{i}^{y_1+y_2} \sum_{i,j=1}^{2m+1}   (A^{-1})_{i,j}\\
&\tilde{\psi}_{y_2+1,2n+1}\left( \frac{2m+1-y_1+y_2}{2},i-m-1\right)\\  
    &\Delta_{\mathcal{B}} \tilde{\psi}_{0,2+b_2} \left( j-m-1, \frac{2m+2-b_1+b_2}{2} \right) 
  \end{split}
\end{equation}
where
\begin{equation}\nonumber
 \begin{split}
 &   \Delta_{\mathcal{B}} \tilde{\psi}_{0,2+b_2} \left( j-m-1, \frac{2m+2-b_1+b_2}{2} \right)=\\
&-\tilde{\psi}_{0,b_2+2} \left( j-m-1, \frac{2m+1-b_1+b_2}{2} \right)+ a \tilde{\psi}_{0,b_2+2} \left( j-m-1, \frac{2m+3-b_1+b_2}{2} \right).
 \end{split}
\end{equation}
We can rewrite the above equation using the integral definition of $\psi$.  A computation gives
\begin{equation}\nonumber
	       \Delta_{\mathcal{B}} \tilde{\psi}_{0,2+b_2} \left( j-m-1, \frac{2m+2-b_1+b_2}{2} \right)=\frac{-1}{2\pi \mathrm{i}} \int_{\Gamma_{0,a}} w^{-2+j-2m+b_1/2} dw=0
\end{equation}
because $b_1 \geq 4m+2$ by the co-ordinates of the bottom boundary.  We have obtained
\begin{equation} \label{inverseK:BB4}
        T_af_2( \mathtt{b},\mathtt{y})=0
\end{equation}
for $\mathtt{b} \in \mathcal{B}$ and $ \mathtt{y}\in \mathtt{B}_{AD}$.  Using~\eqref{inverseK:BB3} and~\eqref{inverseK:BB4} gives
\begin{equation}\nonumber
      T_aC(\mathtt{b},\mathtt{y})=\left\{ \begin{array}{cc}
                                          1 & \mbox{if } \mathtt{b}=\mathtt{y} \\
                                          0 & \mbox{otherwise} 
                                         \end{array} \right.
\end{equation}
for $\mathtt{b} \in \mathcal{B}$ and $ \mathtt{y}\in \mathtt{B}_{AD}$.

\subsection*{The Top Boundary}

We can now consider $\mathtt{b}=(b_1,b_2)$ on the top boundary which means that $b_2=2n-1$ and $b_1>2n$.  We have that for $\mathtt{b} \in \mathcal{T}$ 
\begin{equation}
T_a (\mathtt{b},\mathtt{w}) = \left \{
		\begin{array}{ll}
			-1 & \mbox{if } \mathtt{w}=\mathtt{b}-e_1 \\
			a \mathrm{i} & \mbox{if } \mathtt{w}=\mathtt{b}-e_2 \\
			0 & \mbox{otherwise}
		\end{array} \right.
\end{equation}
We can use the above equation to rewrite~\eqref{inverseK:KtimesKinv}.  We obtain for
   $\mathtt{b} \in \mathcal{T} $ and $y \in  \mathtt{B}_{AD}$
\begin{equation} \label{inverseK: T_a fiTB}
       T_a f_{i}(\mathtt{b},\mathtt{y})=-f_i(\mathtt{b}-e_1,\mathtt{y})+a\mathrm{i}f_i(\mathtt{b}-e_2,\mathtt{y})
\end{equation}
We first consider $T_a f_1(\mathtt{b},\mathtt{y})$ for $b_2=y_2=2n-1$.  By using the analogous expansion for $\mathtt{b}$ as the interior case in terms of $\psi$ and its integral definition, we find that for $b_2=y_2=2n-1$
\begin{equation}
    T_af_1(\mathtt{b},\mathtt{y})=0. \label{inverseK:TBf1firstpart}
\end{equation}
For $y_2<b_2=2n-1$, we  find that
\begin{equation} \label{inverseK:TBf1part}
\begin{split}
    T_af_1(\mathtt{b},\mathtt{y}) &= -\frac{(-1)^{(1+b_1+y_1-y_2)/4} \mathrm{i}^{y_1+y_2-n}}{2\pi \mathrm{i} } \\
& \int_{\Gamma_{0,a}}\frac{ \left(1-\frac{a}{w}\right)^{(1-2n+y_2)/2}  (1+a w)^{(2n-1-y_2)/2}}{w^{(2n+1-b_1+y_1-y_2)/2}} dw.
\end{split}
\end{equation}
For $T_af_2(\mathtt{b},\mathtt{y})$, using~\eqref{inverseK: T_a fiTB} we can follow the analogous computation in the interior case, we find that
\begin{equation}
\begin{split} \label{inverseK:TB T_a f2}
 &     T_af_2 (\mathtt{b},\mathtt{y}) = (-1)^{(b_1-b_2+y_1-y_2)/4} \mathrm{i}^{y_1+y_2} \sum_{i,j=1}^{2m+1}(A^{-1})_{i,j} \\
& \tilde{\psi}_{y_2+1,2n+1} \left( \frac{2m+1-y_1+y_2}{2}, i-m-1 \right)  \Delta_{\mathcal{T}} \tilde{\psi}_{0,b_2} \left(j-m-1,\frac{2m - b_1+b_2}{2}\right) 
\end{split}
\end{equation}
where
\begin{equation}
\begin{split}\nonumber
     & \Delta_{\mathcal{T}} \tilde{\psi}_{0,b_2} \left(j-m-1,\frac{2m - b_1+b_2}{2}\right)=a \tilde{\psi}_{0,b_2} \left(j-m-1,\frac{2m-1 - b_1+b_2}{2}\right)\\
      &+\tilde{\psi}_{0,b_2} \left(j-m-1,\frac{2m +1- b_1+b_2}{2}\right)\\
      &= \frac{1}{2\pi \mathrm{i}} \int_{\Gamma_{0,a}} \frac{w^{-2+j-2m-n+b_1/2} (1+a w)^n}{\left(1-\frac{a}{w} \right)^n }dw
\end{split}
\end{equation}
which is found by expanding out the integrands using the integral definition of ${\psi}$ given in~\eqref{K12} where we have used the fact that $b_2=2n-1$.  Notice that we can rewrite the right hand side of the above equation as 
\begin{equation}
\begin{split} \label{inverseK:TBrearrange}
&      \frac{1}{2\pi \mathrm{i}} \int_{\Gamma_{0,a}} \frac{w^{-2+j-2m-n+b_1/2} (1+a w)^n}{\left(1-\frac{a}{w} \right)^n }dw=\\
& \tilde{\psi}_{0,2n+1} \left(j-m-1,\frac{2m- b_1+2n}{2}\right)
-a\tilde{\psi}_{0,2n+1} \left(j-m-1,\frac{2m +2- b_1+2n}{2}\right).
\end{split}
\end{equation}
which follows by using expanding the right hand side of the above equation using the integral definition of $\psi$. 
By definition of the matrix $A$ given in~\eqref{inverseK:matrixA}, we have
\begin{equation} \label{inverseK:TBAAinv}
      \sum_{j=1}^m (A^{-1})_{i,j} \tilde{\psi}_{0,2n+1} \left( j-m-1,k \right) = \delta_{i,k+m+1}.
\end{equation}
Using~\eqref{inverseK:TBAAinv} and~\eqref{inverseK:TBrearrange} in~\eqref{inverseK:TB T_a f2} gives
\begin{equation} \label{inverseK:TBf2part}
 \begin{split}
     & T_af_2 (\mathtt{b},\mathtt{y}) = (-1)^{(b_1-2n+1+y_1-y_2)/4} \mathrm{i}^{y_1+y_2}\\& \left(\tilde{\psi}_{y_2+1,2n+1} \left( \frac{2m+1 -y_1+y_2}{2} ,\frac{2m - b_1+2n}{2}\right) \right. \\
      &-a\left. \tilde{\psi}_{y_2+1,2n+1} \left( \frac{2m+1 -y_1+y_2}{2} ,\frac{2m+2 - b_1+2n}{2}\right) \right)\\
      &= \frac{(-1)^{(1+b_1+y_1-y_2)/4} \mathrm{i}^{y_1+y_2-n}}{2\pi \mathrm{i}} \int_{\Gamma_{0,a}} \frac{\left(1- \frac{a}{w} \right)^{(1-2n+y_2)/2} (1+ a w)^{(2n-1-y_2)/2} }{w^{(2n+1-b_1+y_1-y_2)/2}} dw 
 \end{split}
\end{equation}
Therefore, for $y_2<b_2$ using~\eqref{inverseK:TBf1part} and~\eqref{inverseK:TBf2part} gives
\begin{equation} \label{inverseK:TBy2<b2}
      T_a.(f_1+f_2)(\mathtt{b},\mathtt{y})= 0.
\end{equation}
 For $y_2=b_2=2n-1$, using~\eqref{inverseK:TBf2part} we have
\begin{equation} \label{inverseK:TBf2party2=b2}
      T_af_2 (\mathtt{b},\mathtt{y}) = \frac{(-1)^{(b_1+y_1)/4}}{4} \mathrm{i}^{y_1} \int_{\Gamma_{0,a}} w^{(b_1-y_1-2)/2} dw=  \left\{ \begin{array}{ll}
                                              1 & \mbox{if } b_1=y_1 \\
                                              0 & \mbox{otherwise}.
                                             \end{array} \right.  
\end{equation}
Finally, using~\eqref{inverseK:TBf1firstpart}, ~\eqref{inverseK:TBy2<b2} and~\eqref{inverseK:TBf2party2=b2}, we have
\begin{equation}\nonumber
      T_aC (\mathtt{b},\mathtt{y} ) = \left\{ \begin{array}{ll}
                                              1 & \mbox{if } b_1=y_1 \\
                                              0 & \mbox{otherwise}
                                             \end{array} \right. 
\end{equation}
for $\mathtt{b} \in \mathcal{T}$ and $y \in  \mathtt{B}_{AD}$. 

\subsection*{The Special Point}
Finally, we need to consider the special point, i.e. when we choose $\mathtt{b}=(2n,2n-1)$. The special point has three neighboring white vertices and we have
\begin{equation}
	T_a( (2n,2n-1),\mathtt{w}) = \left\{
			\begin{array}{ll}
			-1 & \mbox{if } \mathtt{w}=(2n-1,2n-2) \\
			-a \mathrm{i} & \mbox{if } \mathtt{w}=(2n-1,2n) \\
			a \mathrm{i} & \mbox{if } \mathtt{w} =(2n+1,2n-2) \\
			0 & \mbox{otherwise}
			\end{array} \right.
\end{equation}
 For $\mathtt{b}=(2n,2n-1)$ and $y\in  \mathtt{B}_{AD}$,~\eqref{inverseK:KtimesKinv} becomes
\begin{equation} \label{inverseK: T_a fiSP}
       T_a f_i(\mathtt{b},\mathtt{y})= -f_i(\mathtt{b}-e_1,y) + a \mathrm{i} f_i (\mathtt{b}-e_2,\mathtt{y})- a \mathrm{i} f_i (\mathtt{b}+e_2,\mathtt{y}).
\end{equation}
For $y_2=2n-1$, using the analogous steps for $\mathtt{b}$ in the interior, we can write using~\eqref{inverseK: T_a fiSP} and the integral definition of $\psi$ 
\begin{equation} \label{inverseK:SPf1firstpart}
       T_a f_1(\mathtt{b},\mathtt{y}) = \frac{(-1)^{y_1/4} \mathrm{i}^{n+y_1} }{2\pi \mathrm{i}} \int_{\Gamma_{0,a}} \frac{a w^{(2n-1-y_1)/2}}{a-w} dw=0,
\end{equation}
since $y_1 \geq 2n+2$, when $y_2=2n-1$.
For $y_2<2n-1$, we can write $T_a f_1 (\mathtt{b},\mathtt{w})$ using the integral definition of $\psi$ given in~\eqref{K12}.  We obtain
\begin{equation}\label{inverseK:SPf1secondpart}
\begin{split}
     T_af_1 (\mathtt{b},\mathtt{y})&=- \frac{(-1)^{(1+y_1-y_2)/4}\mathrm{i}^{y_1+y_2}}{2\pi \mathrm{i}} \\
&\int_{\Gamma_{0,a}}  \frac{\left(1-\frac{a}{w} \right)^{(-1-2n+y_2)/2} (1+a w)^{(2n-1-y_2)/2} }{w^{(y_1+1-y_2)/2}} dw 
\end{split}
\end{equation}
We can now expand out $T_af_2(\mathtt{b},\mathtt{y})$ using~\eqref{inverseK: T_a fiSP} and the analogous computations given in $\mathtt{b}$ in the interior.  We find that
\begin{equation}
\begin{split}\label{inverseK:SPf2partstar}
 &     T_af_2(\mathtt{b},\mathtt{y}) = (-1)^{(b_1-b_2+y_1-y_2)/4} \mathrm{i}^{y_1+y_2} \sum_{i,j=1}^{2m+1}  (A^{-1})_{i,j}\\
& \tilde{\psi}_{1+y_2,2n+1} \left( \frac{2m+1-y_1+y_2}{2}, i-m-1 \right) 
\Delta_{\mathcal{S}} \tilde{\psi}_{0,b_2} \left( j-m-1, \frac{2m-b_1+b_2}{2} \right)
\end{split}
\end{equation}
where
\begin{equation} \label{inverseK:SPf2part1}
      \begin{split}
	    \Delta_{\mathcal{S}} \tilde{\psi}_{0,b_2} \left( j-m-1, \frac{2m-b_1+b_2}{2} \right) &=a  \tilde{\psi}_{0,b_2} \left( j-m-1, \frac{2m-1-b_1+b_2}{2} \right)  \\
       &+ \tilde{\psi}_{0,b_2} \left( j-m-1, \frac{2m+1-b_1+b_2}{2} \right)\\ 
       & +a\tilde{\psi}_{0,b_2+2} \left( j-m-1, \frac{2m+3-b_1+b_2}{2} \right).
      \end{split}
\end{equation}
By setting $\mathtt{b}=(2n,2n-1)$, we have by a computation that
\begin{equation}
\begin{split}\nonumber
   \Delta_{\mathcal{S}} \tilde{\psi}_{0,2n} \left( j-m-1, \frac{2m-2n+2n-1}{2} \right)&= \frac{1}{2\pi \mathrm{i}} \int_{\Gamma_{0,a}} \frac{w^{-2+j-2m} (1+aw)^n}{\left( 1- \frac{a}{w} \right)^{n+1} } dw \\ 
   &=\tilde{\psi}_{0,2n+1}\left(j-m-1,m+1 \right)
\end{split}
\end{equation}
where the last equation follows from~\eqref{K12} and~\eqref{inverseK:defn_of_psi}.  We can substitute the above equation into~\eqref{inverseK:SPf2partstar} and  use  equation~\eqref{inverseK:TBAAinv} in a  similar way to the analogous computation found in the previous subsection.   We find that
\begin{equation} \label{inverseK:SPf2secondpart}
\begin{split}
       T_a f_2(\mathtt{b},\mathtt{y})
&=(-1)^{(b_1-b_2+y_1-y_2)/4} \mathrm{i}^{y_1+y_2} \tilde{\psi}_{1+y_2,2n+1} \left( \frac{2m+1-y_1+y_2}{2},m+1 \right)\\
&=\frac{(-1)^{(1+y_1-y_2)/4} \mathrm{i}^{y_1+y_2} }{2\pi \mathrm{i} } \int_{\Gamma_{0,a}} \frac{w^{(y_2-y_1-1)/2} (1+a w)^{(2n-1-y_2)/2}}{\left(1- \frac{a}{w} \right)^{(2n+1-y_2)/2}} dw 
\end{split}
\end{equation}
We have that for $y_2=2n-1$,~\eqref{inverseK:SPf2secondpart} is equal to 
\begin{equation}
	T_a f_2(\mathtt{b},\mathtt{y})=\frac{1}{2\pi \mathrm{i}} \int_{\Gamma_{0,a}} (-1)^{y_1/4} \mathrm{i}^{n+y_1} w^{n-1+y_1/2} dw = \left\{  \begin{array}{ll}
					1 &\mbox{if } y_1=2n\\
					0 & \mbox{otherwise}.
					\end{array} \right. 
\end{equation}
for $\mathtt{b}=(2n,2n-1)$.  

Therefore, we have for $\mathtt{b}=(2n,2n-1)$
\begin{equation}\nonumber
     T_a.C( \mathtt{b},\mathtt{y} )=\sum_{i \in\{1,2\}} T_a f_i (\mathtt{b},\mathtt{y}) = \left\{ \begin{array}{ll}
                                            1 & \mbox{if } \mathtt{b}=\mathtt{y}\\
                                            0 & \mbox{otherwise}
                                           \end{array} \right. 
\end{equation}
by using equations~\eqref{inverseK:SPf1firstpart},~\eqref{inverseK:SPf1secondpart} and~\eqref{inverseK:SPf2secondpart} for $\mathtt{b}=(2n,2n-1)$ and $y\in  \mathtt{B}_{AD}$.


\section{Proof of the formula for the $\BL$-kernel} \label{Sec:Lkernel}

In this section, we derive the $\BL$ kernel from the inverse Kasteleyn matrix.  Let $b_j=(\xi_j,\eta_j)$, $1 \leq j \leq l$, be the positions of black vertices in Kasteleyn coordinates.  We want to show that 
\begin{equation}
\mathbb{P} \left[ \BL\mbox{-particles at }b_1,\dots, b_l \right] = \det \left( \BL_{n,\rho} \left(\xi_i, \eta_i;\xi_j,\eta_j\right)_{1 \leq i, j ,\leq l}\right) \label{Lprob}
\end{equation}

Note that we have an $\BL$-particle at a black vertex of $b$ if and only if a dimer (domino) covers the edges $(b,b+f_1)$ or $(b,b+f_2)$ where $f_1=e_1$ and $f_2=-e_2$.  Hence, by Theorem~\ref{Kenyonsthm} and the linearity of the determinant in its rows, we have that
\begin{equation}
\begin{split}
\mathbb{P} \left[ \BL\mbox{-particles at }b_1,\dots, b_l \right] &=\sum_{r_1, \dots, r_l}^2 \prod_{j=1}^l K_a(b_i,b_i +f_{r_i}) \det \left( K_a^{-1} (b_i+f_{r_i},b_j )\right)_{1 \leq i,j \leq l } \\ 
&= \det \left(   \sum_{r_i=1}^2 K_a(b_i,b_i +f_{r_i})K_a^{-1} (b_i+f_{r_i},b_j )\right)_{1 \leq i,j \leq l } 
\end{split}
\end{equation}
Recall that, from~\eqref{inverseK:defn_of_K}, we have that 
\begin{equation}
K_a(b_i,b_i+f_1)=(-1)^{-(\xi_i+\eta_i+1)/2}
\end{equation}
and 
\begin{equation}
K_a(b_i,b_i+f_2)= (-1)^{-(\xi_i+\eta_i+1)/2} a \mathrm{i}.
\end{equation}
If we use the formula~\eqref{InvKastFormula} for the inverse Kasteleyn matrix, we see that 
\begin{equation}
\begin{split}
&(-1)^{(\eta_2-\xi_2)/4-(\eta_1-\xi_1)/4} \sum_{r=1}^2 K_a(b_1,b_1+f_r) K^{-1}_a(b_1+f_r,b_2) \\
&=\BK_{n,\rho}\left(\eta_2+1, \frac{\eta_2-\xi_2+M}{2};\eta_1+2,\frac{\eta_1-\xi_1+M}{2} \right)\\
&-a \BK_{n,\rho}\left(\eta_2+1, \frac{\eta_2-\xi_2+M}{2};\eta_1,\frac{\eta_1-\xi_1+M-2}{2} \right),
\end{split}
\end{equation}
where $M=2m+1$ and we have used $(-1)^{-\eta_1}=-1$ since $\eta_1$ is odd.  Thus, to prove~\eqref{Lprob} it suffices to show that 
\begin{equation}
\begin{split}
\BL_{n,\rho}(\xi_1,\eta_1;\xi_2,\eta_2)&=\BK_{n,\rho}\left(\eta_2+1, \frac{\eta_2-\xi_2+M}{2};\eta_1+2,\frac{\eta_1-\xi_1+M}{2} \right)\\
&-a \BK_{n,\rho}\left(\eta_2+1, \frac{\eta_2-\xi_2+M}{2};\eta_1,\frac{\eta_1-\xi_1+M-2}{2} \right) 
\label{KLrelation}
\end{split}
\end{equation}

Now, with $\varepsilon_i \in \{0,1\}$ we have from~\eqref{K12} we have
\begin{equation}
\begin{split}
&\mathbb{K}_{n,\rho} (2r+\varepsilon_1,x; 2 s +\varepsilon_2,y)=-\Id_{2s+\varepsilon_2<2r + \varepsilon_1} \psi_{2r+\varepsilon_1,2s+\varepsilon_2}(x,y) \\
&+(-1)^{x-y} S(2r+\varepsilon_1,x;2s+\varepsilon_2,y) \\
&-(-1)^{x-y} \left<(( \mathbb{I}-\mathbb{K})^{-1}_{2m+1} a_{-y,2s+\varepsilon_2})(k), b_{-x,2r+\varepsilon_1}(k)  \right>_{\overline{l}^2(2m+1)}\\
&:= \sum_{i=0}^2 R_{n,m}^{(i)} (2r+\varepsilon_1,x; 2 s+\varepsilon_2,y).
\end{split}
\end{equation}
Define
\begin{equation} \label{Ti}
\begin{split}
T_{n,m}^{(i)} (\xi_1, \eta_1; \xi_2, \eta_2) &=R_{n,m}^{(i)} \left(\eta_2+1,\frac{\eta_2- \xi_2+M}{2} ; \eta_1+2, \frac{\eta_1- \xi_1 +M}{2} \right) \\
&-aR_{n,m}^{(i)} \left(\eta_2+1,\frac{\eta_2- \xi_2+M}{2} ; \eta_1, \frac{\eta_1- \xi_1 +M-2}{2} \right).
\end{split}
\end{equation}
 We have to show that 
\begin{equation} \label{SumT}
\sum_{i=0}^2 T_{n,m}^{(i)} (2r+\varepsilon_1,x; 2 s+\varepsilon_2,y) =\BL_{n,\rho} (\xi_1,\eta_1;\xi_2,\eta_2).
\end{equation} 
Since $\eta_1$ and $\eta_2$ are odd we can write $\eta_1=2s-1$, $\eta_2=2r-1$ so that $\eta_2+1=2r$ and $\eta_1=2(s-1)+1$ and $\eta_1+2=2s+1$.  Now by~\eqref{K12}, 
\begin{equation}
\begin{split}
&T_{n,m}^{(0)} (\xi_1,\eta_1; \xi_2,\eta_2) \\
&=-\frac{\Id_{2s+1<2r} }{2 \pi i} \int_{\Gamma_{0,a} } \frac{(1+a z)^{s-r}}{(1-a/z)^{s-r+1}} z^{(\eta_2-\eta_1)/2 +( \xi_1-\xi_2)/2} \frac{d z}{z} \\
&+\frac{a\Id_{2s-1<2r} }{2 \pi i} \int_{\Gamma_{0,a} } \frac{(1+a z)^{s-1-r}}{(1-a/z)^{s-r}} z^{(\eta_2-\eta_1)/2 +( \xi_1-\xi_2)/2+1} \frac{d z}{z}. \\
\end{split}\label{T0}
\end{equation}
If $\eta_1>\eta_2$, i.e. $s \geq r+1$, then $2s+1>2s-1>2r$ and the expression in the right hand side of~\eqref{T0} is equal to zero.  If $\eta_1=\eta_2$, i.e. $r=s$, we get
\begin{equation}
\begin{split}
&\frac{a}{2\pi i } \int_{\Gamma_{0,a} } \frac{z^{(\xi_1-\xi_2)/2}}{1+a z} dz = \frac{a \Id_{\xi_1<\xi_2}}{2\pi i } \int_{\Gamma_{0,a}} \frac{z^{(\xi_1-\xi_2)/2}}{1+a z} dz \\
&=-\frac{\Id_{\xi_1<\xi_2}}{2\pi i } \int_{\Gamma_{-1/a} } \frac{z^{(\xi_1-\xi_2)/2}}{ z+1/a} dz=-\Id_{\xi_1< \xi_2}\left(- \frac{1}{a} \right) ^{(\xi_1-\xi_2)/2} .
\end{split}
\end{equation}
because of the change of orientation from moving the contour $\Gamma_{0,a}$ to the contour $\Gamma_{-1/a}$.  
Thus, 
\begin{equation}
\begin{split}\label{T02}
T_{n,m}^{(0)} (\xi, \eta_1 ; \eta_2, \xi_2)& =-\Id_{\eta_1<\eta_2} \frac{1+a^2}{2\pi i} \int_{\Gamma_{0,a}} \frac{ (1+a z)^{(\eta_1 -\eta_2)/2-1} }{( z- a)^{(\eta_1-\eta_2)/2+1} } z^{(\xi_1-\xi_2)/2} dz\\
&-\Id_{\xi_1 < \xi_2} \Id_{\eta_1 =\eta_2} (-a)^{(\xi_2-\xi_1)/2} .
\end{split} 
\end{equation}
If $\xi_1 \geq \xi_2$ and $\eta_1 < \eta_2$ the integral in~\eqref{T02} is equal to 0 since the integrand is analytic inside $\Gamma_{0,a}$. Thus, the first term in~\eqref{T02} equals
\begin{equation}
\begin{split} 
&-\Id_{\eta_1<\eta_2} \Id_{\xi_1<\xi_2} \frac{1+a^2}{2\pi i } \int_{\Gamma_{0,a} } \frac{ (1+a z)^{(\eta_1-\eta_2)/2-1}}{(z-a)^{(\eta_1-\eta_2)/2+1}} z^{(\xi_1 - \xi_2)/2}  dz \\ 
&=-(1-\Id_{\eta_1 >\eta_2}  -\Id_{\eta_1=\eta_2} ) \Id_{\xi_1 <\xi_2} 
\frac{1+a^2}{2\pi i } \int_{\Gamma_{0,a} } \frac{ (1+a z)^{(\eta_1-\eta_2)/2-1}}{(z-a)^{(\eta_1-\eta_2)/2+1}} z^{(\xi_1 - \xi_2)/2}  dz. 
\end{split}
\end{equation}
The last integral is equal to zero if $\eta_1>\eta_2$ and hence the last expression equals 
\begin{equation}
\begin{split}
&- \Id_{\eta_1=\eta_2} \Id_{\xi_1<\xi_2} \frac{1+a^2}{2\pi i } \int_{\Gamma_{-1/a} } \frac{z^{(\xi_1-\xi_2)/2}}{(1+a z)(z-a)} dz \\
&- \Id_{\xi_1 < \xi_2} \frac{1+a^2}{2 \pi i } \int_{\Gamma_{0,a} } \frac{( 1+ a z)^{(\eta_1-\eta_2)/2-1} }{ (z-a)^{(\eta_1-\eta_2)/2+1}} z^{(\xi_1-\xi_2)/2} dz.
\end{split}
\end{equation}
The first term in this expression equals
\begin{equation}
\Id_{\eta_1=\eta_2} \Id_{\xi_1<\xi_2} \left(-\frac{1}{a} \right)^{(\xi_1-\xi_2)/2} 
\end{equation}
 so combining this result with~\eqref{T02} we find
\begin{equation}
\begin{split} \label{TO3}
&T_{n,m}^{(0)} (\xi_1,\eta_1; \xi_2, \eta_2) \\
&=-\Id_{\xi_1<\xi_2} \frac{1+a^2}{2 \pi i} \int_{\Gamma_{0,a}} \frac{ (1+a z)^{(\eta_1-\eta_2)/2-1}}{(z-a)^{(\eta_1-\eta_2)/2+1} } z^{(\xi_1-\xi_2)/2}.
\end{split}
\end{equation}
Next, using~\eqref{K12} and inserting it into~\eqref{Ti} we get
\begin{equation}
\begin{split}
&T_{n,m}^{(1)} (\xi_1,\eta_1;\xi_2,\eta_2) \\
&=\frac{1+a^2}{(2\pi i)^2 } \int_{\Gamma_{0,a}} dv \int_{\Gamma_{0,a,v}} \frac{dv}{v-u}\frac{ v^{n-\xi_2/2}}{u^{n- \xi_1/2}} \frac{(1+a u)^{(\eta_1-1)/2} (u- a)^{n-(\eta_1+1)/2} }{ (1+a v)^{(\eta_2+1)/2} (v-a)^{n-(\eta_2-1)/2} } 
\label{T1}
\end{split}
\end{equation}
after a short computation.  Finally, by~\eqref{Ti} and~\eqref{K12} we see that
\begin{equation}
\begin{split} \label{T21} 
&T_{n,m}^{(2)}(\xi_1,\eta_1 ; \xi_2, \eta_2) \\
&= \left< (\mathbbm{I}-K)^{-1}_{2m+1} (a_{-(\eta_1- \xi_1 +M)/2,2s+1} +a a_{-(\eta_1-\xi_1 +M-2)/2 ,2s-1}) (k),\right.\\
& \left. b_{-(\eta_2-\xi_2+M)/2,2r} (k) \right> _{\ge 2m+1} (-1)^{(\eta_2-\xi_2+M)/2-(\eta_1-\xi_1+M-2)/2}.
\end{split}
\end{equation}
Now, using~\eqref{K12} a computation gives
\begin{equation}
\begin{split}
&(-1)^{-(\eta_1 - \xi_1+M-2)/2} \left( a_{-(\eta_1- \xi_1 +M)/2,2s+1}(k) +a a_{-(\eta_1-\xi_1 +M-2)/2 ,2s-1}(k) \right) \\
&= \frac{(-1)^k (1+a^2)}{ (2 \pi i)^2}\int_{\Gamma_{0,a}} du \int_{\Gamma_{0,a,u} } \frac{dv}{ u-v}\frac{v^{(\xi_1 - \eta_1-1)/2}}{ u^{k+1}} \frac{ (1+a v)^{(\eta_1-1)/2} (1-a /v)^{n-(\eta_1+1)/2} }{ (1+a v)^n (v-a)^{n+1} } \\
&= -(1+a^2) A_{\xi_1, \eta_1}(k).
\label{A11}
\end{split}
\end{equation}
Note that by moving the $v$-contour in~\eqref{A11} inside the $u$-contour we get the expression in~\eqref{oneAzt}.  Similarly, by~\eqref{K12},
\begin{equation}
\begin{split}
&b_{-(\eta_2-\xi_2+M)/2,2r}(k) (-1)^{(\eta_2-\xi_2+M)/2}\\
&=\frac{(-1)^k}{(2\pi i )^2} \int_{\Gamma_{0,a}} du \int_{\Gamma_{0,a,u}} \frac{dv}{v-u} \frac{v^k}{u^{(\xi_2-\eta_2+1)/2}} \frac{(1+a v)^n(w-a)^{n+1} }{ (1+a v)^{(\eta_2+1)/2} (1-a /v)^{n-(\eta_2-1)/2} } \\
&=B_{\xi_2,\eta_2}(k)
\end{split}
\end{equation}
Again by moving the $v$-contour inside the $u$-contour we get the expression in~\eqref{oneAzt}. Thus,
\begin{equation}
T_{n,m}^{(2)}(\xi_1,\eta_1 ; \xi_2, \eta_2)=-\left< ((\mathbbm{I}-K)^{-1}_{2m+1} A_{\xi_1,\eta_1})(k) , B_{\xi_2,\eta_2}(k) \right>_{\ge 2m+1} (1+a^2).\label{T2}
\end{equation}
If we use~\eqref{TO3},~\eqref{T1} and~\eqref{T2} we obtain~\eqref{SumT} which is what we wanted to prove.


\begin{thebibliography}{10}






  
  \bibitem{AFvM12}
M.~Adler, P.L. Ferrari, and P.~van Moerbeke, \emph{Non-intersecting random walks in the neighborhood of a symmetric Tacnode}, Ann. of Prob. 2012 (arXiv: 1007.1163).

 \bibitem{AJvM}
 M.~Adler, K.~Johansson, and P.~van Moerbeke, \emph{Double Aztec Diamonds and the Tacnode Process}, arXiv:1112.5532, 2011 .
 






  





  
  

  
  





  
  \bibitem{CJY:12}
Sunil Chhita, Kurt Johansson, and Benjamin Young.
{\em Asymptotic statistics of domino tilings of the Aztec diamond.}
arXiv:1212.5414, 2012.

\bibitem{CKP}
Henry Cohn, Richard Kenyon, and James Propp.
{\em A variational pricinple for domino tilings}, J. Amer. Math. Soc., \textbf{14}(2):297-346(electronic), 2001. 



\bibitem{Del12} S.~Delvaux: {\em  The tacnode kernel: equality of Riemann-Hilbert and Airy resolvent formulas}, arXiv:1211.4845.


\bibitem{DKZ10}
S.~Delvaux, A.~Kuijlaars, and L.~Zhang, \emph{{Critical behavior of
  non-intersecting Brownian motions at a tacnode}}, Comm. Pure Appl. Math.
  \textbf{64} (2011), 1305–--1383.

  
\bibitem{EKLP} N. Elkies, G. Kuperberg, M. Larsen and J. Propp, \emph{Alternating sign Matrices and Domino Tilings, part I,}  J. Algebraic Combin. 1 (1992), no. 2, 111--132.

\bibitem{EKLP2} N. Elkies, G. Kuperberg, M. Larsen and J. Propp, \emph{Alternating sign Matrices and Domino Tilings, part II,}  J. Algebraic Combin. 1 (1992), no. 2, 219--234.



 \bibitem{FS03}
 P.L. Ferrari and H.~Spohn, \emph{Step fluctations for a faceted crystal}, J.
   Stat. Phys. \textbf{113} (2003), 1--46.
  
  
  \bibitem{FV} P. L. Ferrari and B. Vet\"o, \emph{Non-colliding Brownian bridges and the asymmetric tacnode process},  Electron. J. Probab. \textbf{17} (2012), 1-17

  
  \bibitem{JPS}
W.~Jockush J. Propp and P. Shor, \emph{Random domino tilings and the arctic circle Theorem}. Preprint. Available at arXiv.org/abs/math.CO/9801068.  

\bibitem{Jo02b}
K.~Johansson, \emph{Non-intersecting paths, random tilings and random
  matrices}, Probab. Theory Related Fields \textbf{123} (2002), 225--280.

\bibitem{Jo03b}
K.~Johansson, \emph{Discrete polynuclear growth and determinantal processes},
  Comm. Math. Phys. \textbf{242} (2003), 277--329.


  
  







\bibitem{Johansson1}
 K. Johansson: {\em Discrete orthogonal
polynomial ensembles and the Plancherel measure}, Annals
of Math, {\bf 153}, 259-296 (2001)
arXiv:math.CO/9906120.





\bibitem{Johansson3}
 K. Johansson:
{\em   The Arctic circle boundary and the Airy process}, Ann. Probab. 33 (2005), no. 1, 1Ð30. 
(ArXiv. Math. PR/0306216 (2003))




\bibitem{Joh10}
K.~Johansson, \emph{{Non-colliding Brownian Motions and the extended tacnode
  process}}, arXiv:1105.4027 (2011).
  
  \bibitem{JoNo} K.~Johansson and E.~Nordenstam, {\em Eigenvalues of GUE minors}, Electron. J. Probab. 11 (2006), no. 50, 1342--1371.
  
\bibitem{Hel:00}
H. Helfgott, \emph{Edge effects on local statistcs in lattice dimers: A study of the aztec diamond (finite case)}, arXiv:0007:7136 (2000).  
  
  \bibitem{Kas:61}
P.W. Kasteleyn,
{\em The statistics of dimers on a lattice: I. the number of dimer
  arrangements on a quadratic lattice},
  {Physica}, 27:1209--1225, 1961.
  
  
  
\bibitem{Ken:97}
R. Kenyon,
{\em Local statistics of lattice dimers}, {Ann. Inst. H. Poincar\'e Probab. Statist.}, {\bf 33}(5):591--618,
  1997.

\bibitem{LRS:01}
M. Luby, D. Randall and A. Sinclair, \emph{Markov chain algorithms for planar lattice structures}, SIAM J. Comput., 31(1):167-192(electronic) 2001.


\bibitem{OR}
A.~Okounkov and  N.~Reshetikhin, {\em The birth of a random matrix}, Mosc. Math. J. 6 (2006), no. 3, 553¿566, 588.

\bibitem{Pro:03}
James Propp.
\newblock Generalized domino-shuffling.
\newblock {\em Theoret. Comput. Sci.}, 303(2-3):267--301, 2003.
























\end{thebibliography}
\end{document}